\newcommand{\sredm}[1]{\ifmmode\text{\xout{\ensuremath{\displaystyle \textcolor{red}{#1}}}}\else\sout{\textcolor{red}{#1}}\fi}
\definecolor{darkgreen}{rgb}{0,0.5,0}
\newcommand{\kibitz}[2]{\ifnum\Comments=1\textcolor{#1}{#2}\fi}
\numberwithin{equation}{section}
\newtheorem{theorem}{Theorem}[section]
\newtheorem{lemma}{Lemma}[section]
\newtheorem{proposition}{Proposition}[section]
\newtheorem{corollary}{Corollary}[section]
\newtheorem{definition}{Definition}[section]
\newtheorem{remark}{Remark}[section]
\newcommand{\noi}{\noindent}
\newcommand{\fra}{\mathfrak{a}}
\newcommand{\E}{\mathbb{E}}
\newcommand{\R}{\mathbb{R}}
\newcommand{\N}{\mathbb{N}}
\newcommand{\eps}{\varepsilon}
\newcommand\numberthis{\addtocounter{equation}{1}\tag{\theequation}}
\newcommand{\one}{\boldsymbol{\mathbbm{1}}}
\newcommand{\vr}{\varrho}
\newcommand{\al}{\alpha}
\newcommand{\EE}{{\mathbb E}}
\newcommand{\FF}{{\mathbb F}}
\newcommand{\PP}{{\mathbb P}}
\newcommand{\calA}{{\mathcal A}}
\newcommand{\calB}{{\mathcal B}}
\newcommand{\calC}{{\mathcal C}}
\newcommand{\calM}{{\mathcal M}}
\newcommand{\calN}{{\mathcal N}}
\newcommand{\calP}{{\mathcal P}}
\newcommand{\calQ}{{\mathcal Q}}
\newcommand{\calS}{{\mathcal S}}
\newcommand{\calT}{{\mathcal T}}
\newcommand{\skp}{\vspace{\baselineskip}}
\newcommand{\pl}{\partial}
\newcommand\iy{\infty}
\newcommand{\A}{\mathbb{A}}
\newcommand{\limn}{\lim_{n\to\iy}}
\newcommand{\1}{\boldsymbol{\mathbbm{1}}}
\DeclareMathOperator*{\argmin}{arg\,min}
\newcommand{\TT}{\tilde T}
\newcommand{\mT}{m^{(T)}}
\newcommand{\mTT}{m^{(\TT)}}
\newcommand{\vT}{v^{(T)}}
\newcommand{\vTT}{v^{(\TT)}}
\newcommand{\Deltadot}{\Delta}
\title{Analysis of the Finite-State Ergodic Master Equation}
\author[A. Cohen]{Asaf Cohen }
\address{Department of Mathematics\\
University of Michigan\\
Ann Arbor, MI 48109\\
United States
}
\email{shloshim@gmail.com }
\author[E. Zell]{Ethan Zell}
\address{Department of Mathematics\\
University of Michigan\\
Ann Arbor, MI 48109\\
United States
}
\email{ezell@umich.edu}
\thanks{A. Cohen acknowledges the financial support of Research supported by the National Science Foundation (DMS-2006305). }
\def\namedlabel#1#2{\begingroup
    #2%
    \def\@currentlabel{#2}%
    \phantomsection\label{#1}\endgroup
}
\date{\today}
\begin{document}
\maketitle

\begin{abstract}

Mean field games model equilibria in games with a continuum of  players as limiting systems of symmetric $n$-player games with weak interaction between the players. We consider a finite-state, infinite-horizon problem with two cost criteria: discounted and ergodic. Under the Lasry--Lions monotonicity condition we characterize the stationary ergodic mean field game equilibrium by a mean field game system of two coupled equations: one for the value and the other for the stationary measure. This system is linked with the ergodic master equation. Several discounted mean field game systems are utilized in order to set up the relevant discounted master equations. We show that the discounted master equations are smooth, uniformly in the discount factor. Taking the discount factor to zero, we achieve the smoothness of the ergodic master equation. 
\end{abstract}
\skp
\noi{\bf Keywords:}    
Mean-field games, master equation, finite state control problem, Markov chains 

\noi{\bf AMS Classification:} 
60J27, 
35M99, 
91A15 

\tableofcontents

\section{Introduction}\label{sec1}

In this paper we study a stationary mean field game (MFG) with ergodic cost and finite state space. Under the Lasry--Lions monotonicity condition, we characterize the value of the game and the stationary distribution by a system of two coupled equations. The MFG is also characterized by a single partial differential equation, called the {\it master equation}:
\begin{align}
\tag{ME}\label{ME}
\varrho = H(x,\Delta_x U_0(\cdot,\eta))+\sum_{y,z\in[d]}\eta_y D^\eta_{yz} U_0(x,\eta)\gamma^*_z(y,\Delta_y U_0(\cdot,\eta))+F(x,\eta),
\end{align} 
where $x\in[d]:=\{1,\ldots,d\}$, $\eta$ belongs to the $(d-1)$-dimensional simplex, $\Delta_x U_0(\cdot,\eta):=(U_0(y,\eta)-U_0(x,\eta))_{y\in[d]}$ is the vector of differences, and  $D^\eta_{yz}U_0(x,\eta)$ stands for  a directional derivative on the $(d-1)$-dimensional simplex (given explicitly in Section \ref{sec11}). The data are the Hamiltonian $H$ and the function $F$ that represents a running cost part that is independent of the control; $\gamma^*$ is the unique optimal rate selector for the Hamiltonian, and specifically, satisfies $\gamma^*(y,p)=D_pH(y,p)$. The solution to this system is the couple $(\varrho,U_0)$, where $\varrho\in\R$ is the {\it value} of the ergodic MFG and $U_0$ is a real-valued function, which is referred to as the {\it potential function}.


The main result of this work is the well-posedness of the ergodic master equation \eqref{ME} in the classical sense. We prove existence and uniqueness of the solution, and inherit regularity properties from the related discounted master equation. Specifically, we go through various forward-backward linearized discounted systems to obtain well-posedness for a discounted master equation and the regularity of its solution, uniformly in the discount factor. Then we use a \textit{vanishing discount} approach to obtain the main result.


\subsection{Background} MFGs were introduced by Lasry and Lions in \cite{Lasry2006, LasryLions2} and by Caines, Huang, and Malham\'e in \cite{Huang2006} as limiting systems of $n$-player stochastic differential games of mean field type. Since their introduction, the theory of MFGs has greatly developed and results are summarized in the books \cite{Achdou_book, Bensoussan2013, CardaliaguetDelarueLasryLions, CarmonaDelarue_book_I, CarmonaDelarue_book_II}.

Analogous to a Nash equilibrium in a finite player game, MFGs also have a notion of equilibrium. 
From a stochastic control point of view, the MFG equilibrium is described as a fixed point of a best response map, which sends a given flow of measures to the distribution of a controlled state-dynamics. Alternatively, it is characterized as the solution to a coupled system of differential equations, called the {\it MFG system}. One of the equations is a Hamilton--Jacobi--Bellman  equation and the other is Kolmogorov's forward equation. 
%

In order to establish the connection between the many-player games and the respective MFG, there are two major problems of interest: (1) proving the well-posedness of the MFG system and (2) showing convergence of the $n$-player system/equilibria to the MFG system/equilibria. At the intersection of these two fundamental questions is the so-called master equation. This is a (partial) differential equation on the space of player distributions that characterizes the value of the MFG; as such, the  solution of the master equation satisfies certain properties that relates it to the solution of the MFG system from which it is derived. Cardaliaguet, Delarue, Lasry, and Lions developed a method in \cite{CardaliaguetDelarueLasryLions} that utilizes the master equation in order to prove the convergence of the $n$-player game to its mean field limit. Subsequently, the master equation approach was further used by \cite{bay-coh2019} and \cite{cec-pel2019} for finite state, finite horizon games. Later, \cite{BCCD} and \cite{BCCD-arxiv} utilized the master equation method for finite state MFGs with common noise while \cite{JEP_2021__8__1099_0} introduced a weak notion of master equation solutions that relied on a Lasry--Lions monotonicity assumption. While the Lasry--Lions monotonicity assumption is standard in MFG literature, novel conditions appear in \cite{gan-mes-mou-zha2022} and \cite{mou-zha2019}; the former \cite{gan-mes-mou-zha2022} offers an alternative \textit{displacement monotonicity} condition that allows analysis of non-separable Hamiltonians. Then in \cite{2022arXiv220110762M}, Zhang and Mou give an \textit{anti-monotonicity} condition which allows for model data outside of both the Lasry--Lions monotonicity and the displacement monotonicity conditions.

The master equation approach to proving convergence of the $n$-player system improves on techniques that directly compare the MFG system and the many-player Nash system, since the latter provides convergence only for the finite horizon problem, see e.g., \cite{Gomes2013}. For more about classical solutions of master equations, we refer the reader to: \cite{gan-swi2015, buc-li-pen-rai2017, ben-jam-yam2019}, and for weak/viscosity solutions: \cite{mou-zha2019, wu-zha2020, bur-ign-rep2020}. The master equation is also used to obtain fluctuation and large deviation results as in \cite{bay-coh2019, cec-pel2019, del-lac-ram2019, del-lac-ram2020} and for numerical schemes as in \cite{cha-cri-del2019}.

In ergodic MFGs the cost criterion is a long-term average cost. The seminal work \cite{LasryLions} by Lasry and Lions introduced the notion of stationary MFGs as limits of $n$-player games with ergodic costs. Some details were considered later in \cite{MR3127148}\footnote{When we mention the term {\it ergodic} we mean that the performance criteria are formulated as long-time average costs, the term {\it stationary} means that the system considered is independent of time.}. Since then, ergodic MFGs have been studied in various contexts, e.g. discrete-time, \cite{MR4064670}, \cite{MR3059616}, and diffusion processes \cite{MR3597374}. In \cite{car-por}, Cardaliaguet and Porretta studied the mean field system with discounted and ergodic cost criteria on the torus, formulating a weak solution to their ergodic master equation. We, on the other hand, work with the finite-state case, and take the analysis one step further and establish existence and uniqueness of a classical solution on the simplex, and moreover, show that the solution has Lipschitz partial derivatives. In a forthcoming paper, we will use this regularity to prove convergence of the $n$-player game to its mean field limit in the case of Markovian closed-loop controls. In \cite{Gomes2013}, Gomes, Mohr, and Souza introduced the {\it ergodic MFG system} in the finite state case:
\begin{align}\tag{MFG$_0$}
\label{erg_MFG}
\begin{cases}
-\bar\varrho + H(x,\Delta_x \bar u)+F(x,\bar\mu)=0,\\
\sum_{y\in[d]}\bar\mu_y\gamma^*_x(y,\Delta_y \bar u)=0.
\end{cases}
\end{align}
The unique solution is the triplet $(\bar \varrho,\bar  u, \bar\mu)$, where $\bar\varrho\in\R$ is the  value of the game, $\bar u\in\calM_0:=\{\eta\in\R^d:\sum_{x\in[d]}\eta_x=0\}$ is the potential vector, and $\bar\mu$ is the stationary distribution of the dynamics of the MFG.  The proof provided in \cite{Gomes2013} that ensures existence and uniqueness for the system \eqref{erg_MFG} relies on contractive assumptions that this work avoids. Therefore, we will provide an alternate proof of existence and uniqueness by going through a stationary discounted MFG system and applying a vanishing discount argument. Later as part of the main result, we will prove the connection between the ergodic MFG system \eqref{erg_MFG} and the ergodic master equation \eqref{ME}.


\subsection{Outline of the Paper and the Main Results}

After outlining the MFG models, we define mean field equilibria (MFE) for both the discounted and the ergodic problems via a best response map. Specifically, a MFE is a fixed point in the following sense. First, consider a single, representative player that optimally responds to a ``continuum of other players," formulated as a flow of distributions over the states. We have a MFE when the representative player's best response strategy causes the player's distribution law to agree with the flow of measures. We continue by setting up the MFG systems associated with each MFE, and prove that a MFE naturally characterizes the solution to the corresponding MFG system (and vice versa). The relationship between the MFG systems' solutions and MFE are specifically outlined in Propositions \ref{prop:statDisc_erg_mfg} and \ref{prop:MFG} and rely on probabilistic arguments.\footnote{Note that we distinguish between the notions of MFE and MFG system and establish the relationship between the two. Gomes et al.~\cite{Gomes2013} did not define the mean field equilibria in the probabilistic way as we do (neither in the finite horizon nor in the ergodic case). What they refer to as the mean field game equilibrium is actually the MFG system. In the stationary context, they focused on the trend of the finite horizon MFG system to the stationary MFG system. 
} Later, we show that the MFG systems are intimately related to their corresponding master equations in a sense we will make concrete in Proposition \ref{prop:MEr} and Theorem \ref{prop:ME}.

In order to provide existence and uniqueness results for the ergodic master equation, we first establish the regularity of the discounted master equations' solutions, uniformly in the discount factor. As briefly mentioned in the prior section, we expand on a technique developed in \cite{car-por} that takes advantage of the unique solution to the {\it stationary discounted MFG system} and introduce linearized systems of ordinary differential equations (ODEs) centered around that solution. We take the analysis further, improving our bounds on the solutions of the discounted MFG systems and using this to establish uniform regularity in the discounted master equation. Then by a vanishing discount argument, we obtain existence and regularity for the ergodic master equation. In the rest of this subsection we guide the reader through the stages of the proof.

In the finite horizon case, the authors of \cite{CardaliaguetDelarueLasryLions} constructed a solution to the master equation from the MFG system and established regularity of this solution by using a linearized system around the MFG system. Apparently, centering the linearized discounted systems around the discounted MFG systems is not enough in order to achieve regularity properties, uniformly in the discounted factor. The first to realize this were Cardlaiaguet and Porretta, and in \cite{car-por} they showed that the proper way to obtain uniform properties is to first build the discounted linearized systems around the {\it stationary discounted system}. This {\it stationary} discounted system takes the initial distribution to be stationary (in the discounted MFG) and as a result, the solution to this system is constant in time. We show in Proposition \ref{prop:statDisc_erg_mfg} that the stationary discounted MFG system is well-posed and converges to the ergodic MFG system \eqref{erg_MFG} as the discount factor goes to zero. We also prove the correspondence between the ergodic MFG system and ergodic MFE.

After the stationary systems are well understood, we introduce several linearized ODE systems. The linearized system approach provides an important avenue to assess the regularity of the solution to the discounted master equation. Specifically, we use several linearized MFG systems, showing first their well-posedness. Because the differences of discounted MFG solutions satisfy a particular linearized system, we are able to produce a bound on the difference between the solutions that is independent of the discount factor. This is one result in Proposition \ref{prop:MFG}. We then explicitly show that the unique solution to a particular linearized system (namely equation \eqref{lin_MFG_r_around_MFG^r}) is, in a sense, a derivative of the MFG system and that this solution is regular. Then, we can pass along this regularity to the discounted master equation solution. We set up the discounted master equations and establish their regularity, uniformly in the discount factor around zero. This is the content of Proposition \ref{prop:MEr}.

Once we establish regularity for the solution of the discounted master equation, we show that a family of solutions (indexed by the discount factor) is sequentially compact given the discount factor is small enough. Therefore, for any sequence of vanishing discounts, we can find a further subsequence which has a limit. The limit we find will be a solution to the ergodic master equation and will inherit the regularity of the solution to the discounted master equation. The value of the solution, namely  $\vr$, is unique and the potential function $U_0$ is unique up to an additive constant.  
Furthermore, the ergodic master equation and the ergodic MFG system are connected as follows. Let $(\bar\vr,\bar u,\bar\mu)$ the unique solution of $\eqref{erg_MFG}$, then $\vr=\bar\vr$ and $U_0(\cdot,\bar\mu) = \bar u_\cdot+k\vec{1}$, where $\vec{1}=(1,\ldots,1)\in\R^d$. These results are summarized in the main theorem of the paper, Theorem \ref{prop:ME}.

Throughout the paper, we borrow different techniques from the finite horizon case to establish our results, though there are several major differences in our approach that we now highlight. In particular, the so-called \textit{duality} of the MFG system plays a more central role in obtaining several estimates. That is, the coupled equations in our MFG systems are, in some sense, dual to one other. As we will see, treating the MFG value as a test function to integrate against, one can derive useful estimates for both the MFG system and the linearized MFG system.  Aside from the duality estimates, the analysis of finite horizon MFGs heavily relies on Gronwall's inequality, see e.g., \cite{CardaliaguetDelarueLasryLions, bay-coh2019, cec-pel2019}. Since we aim to obtain estimates that are uniform in the discount factor, we need to use other techniques. For this, we utilize machinery from the theory of Markov chains and the theory of differential equations. Namely, in Section \ref{sec:useful_lemmas} we introduce several lemmas which provide estimates on: (1) the measures from the MFG system when the initial conditions are allowed to vary and (2) the dynamics of the value from the MFG system. 


\subsection{Organization of the Paper} 

We conclude this section with an overview of some notation and basic definitions. The remainder of the paper is organized as follows. In Section \ref{sec2}, we describe the models of interest in detail, list the assumptions on the models, introduce the relevant MFG systems and master equations, and list our results. Section \ref{sec:useful_lemmas} begins our analysis by providing several preliminary lemmas that give estimates on solutions to the finite state analogs of a linear equation in divergence form and a viscous transport equation. Then, Section \ref{sec:stationary} provides the proof for Proposition \ref{prop:statDisc_erg_mfg} that deals with the stationary MFG systems. Next, Section \ref{sec:preparation} prepares the ground for the analysis of the difference between solutions to the discounted MFG system and for the results about the discounted master equations. It begins with an analysis of linearized systems which are relevant for the master equation analysis. Section \ref{sec:proof_prop_MFG} proves the results for the discounted MFG system; in particular, it recognizes that the differences of solutions to the discounted MFG system satisfy a linearized equation of the form studied in the prior section and is able to make use of the lemmas from that section. Next, Section \ref{sec:proof_MEr} makes use of all the prior work to prove Proposition \ref{prop:MEr}; this is the main result for the discounted master equation. Finally, Section \ref{sec:pf_thm} utilizes the uniform regularity established in the previous section to prove Theorem \ref{prop:ME}, which is the main result of the paper and deals with  the ergodic master equation.

\subsection{Notation and Basic Definitions}\label{sec11}
We denote the ray $[0,\infty)$ by $\R_+$ and the set of positive integers by $\N$. The parameter $d\in\N$ is fixed throughout and we denote the set $[d]:=\{1,2,\dots,d\}$. For $p=1,2$, we denote the $p$-norm on $\R^d$ by $|\cdot|_p$. Let $\1_A$ be the indicator function corresponding to the event $A$; that is, the function that is $1$ on $A$ and $0$ on its complement. The parameters $x,y$, and $z$ represent states and belong to the state-space $[d]$. Moreover, $\sum_{y,y\ne x}$ stands for $\sum_{y\in[d],y\ne x}$. Let
\begin{align}\notag
\calP([d])&:=\Big\{\eta\in\R^d_+:\sum_{x\in[d]}\eta_x = 1\Big\}.
\\\notag
\end{align} For any subset $\A\subseteq \R_+$, we define
$\calQ[\A]$ as the set of $d\times d$ transition matrices with rates in $\A$; that is, $\alpha \in \calQ[\A]$ if for every distinct $x,y\in [d]$, $\alpha_{xy}\in \A$ and $\sum_{z\in [d]} \alpha_{xz}=0$. As a matter of simplifying certain notationally intensive expressions, we introduce a few convenient notations. For any $x\in[d]$ and  $p\in\R^d$, set:
\begin{align}\notag
\Delta_x p:=(p_y-p_x)_{y\in[d]}. 
\end{align} Even more compactly,
\[
|\Deltadot p|:=\Big(\sum_{y\in[d]}|\Delta_y p|^2 \Big)^{1/2}.
\] Set $e_{yz}:=e_z-e_y$ where $(e_i:i\in[d])$ is the standard basis of $\R^d$. Let $\vec{1}\in\R^d$ be the vector of all ones. For any $b\in \R^d$, define 
\[
\langle b\rangle := \vec{1}\Big(\frac{1}{d}\sum_{x\in [d]} b_x\Big).
\] For a sufficiently differentiable function $K$ mapping $\R^d\ni p \mapsto K(p)\in\R$, we denote its gradient by $D_p K$ or $\nabla_p K$ and the Hessian matrix by $D_{pp}^2 K$. We also use derivatives on the simplex as follows. When $K:\calP([d]) \to \R$ is a measurable function, we say that $K$ is differentiable if for any distinct $y,z\in [d]$, there exists a function $D^\eta_{yz} K : \calP([d])\to\R$ such that: 
\[
D^\eta_{yz} K (\eta) = \lim_{h\to 0^+} \frac{K(\eta + he_{yz})-K(\eta)}{h}.
\] 
Denote $D^\eta K=(D^\eta_{yz} K)_{y,z\in [d]}$ and $D^\eta_y K := (D^\eta_{yz} K)_{z\in [d]}$. The set of continuously differentiable functions mapping $\R_+$ to some Banach space $X$, is denoted by $\calC^1(\R_+,X)$. Similarly, the set of all continuously differentiable functions $K:\calP([d])\to\R$ is denoted by $\calC^1(\calP([d]))$. For $m\in\calM_0 := \{m\in\R^d \mid \sum_{x\in [d]} m_x =0\}$, we define $\frac{\pl}{\pl m} K : \calP([d])\to\R^d$ and subsequently note that:
\begin{align}\label{simplex_witchcraft}
\Big[\frac{\pl}{\pl m } K(\eta)\Big]_y := m\cdot D^\eta_y K(\eta) = m\cdot D^\eta_1 K(\eta),
\end{align} where for a matrix $A$, we denote by $A_y$ its $y$-th row. For further discussion of derivatives on the simplex $\calP([d])$ and justification of these computations, we refer the reader to \cite[Section 2.1]{cec-pel2019}. 

Lastly, for $K: [d]\times \calP([d])\to\R$ we say that $K$ is {\it Lasry--Lions monotone} if for all $\eta,\hat\eta\in\calP([d])$,
\[
\sum_{x\in [d]} (K(x,\eta) - K(x,\hat\eta)) (\eta_x-\hat\eta_x) \geq 0.
\]

\section{The model and the main results}
\label{sec2}

In Section \ref{sec:mfg_descrip}, we describe the discounted and ergodic models of interest. The assumptions about the models are listed in Section \ref{sec:n_assumptions}. In Section \ref{sec:MFG_sys}, we introduce the corresponding discounted and ergodic MFG games. That is, we provide the MFG systems and  the relationship between the ergodic MFG system and the stationary discounted MFG systems and the mean field equilibria, described by a fixed point of a best response map. Then, we introduce the discounted and ergodic master equations in Section \ref{sec:MEs}. We provide there the connection to their respective MFGs and state the main results of the paper, Theorem \ref{prop:ME}, that deals with the well-posedness and regularity of the ergodic master equation. 

\subsection{Description of the MFGs}\label{sec:mfg_descrip} Throughout the paper, we have in the background a fixed filtered probability space $(\Omega, \FF,\PP)$. We consider two finite-state continuous-time MFGs: discounted and ergodic. The games share much of the same structure and differ only by their cost criteria. The state space is $[d]$, where $d$ is an integer greater than one. Informally, in an MFG a continuum of players aim to minimize their own individual cost. The initial distribution of the players along the states is $\mu_0\in\calP([d])$. Rigorously, equilibria in MFGs are defined via a fixed point of a best response mapping as we now illustrate. 

We consider a representative player that uses Markovian controls taking values in a set of rates $\A\subseteq \R_+$, also called the {\it action set}. A {\it Markovian control} is  a measurable function $\alpha:\R_+\times [d]\to \R^d$, such that for any $y\in[d]$, $y\ne x$, $\alpha_y(t,x)\in\A$ is the rate of transition at time $t$ to move from state $x$ to state $y$; and $\alpha_x(t,x)=-\sum_{y,y\ne x}\alpha_y(t,x)$. A Markovian control $\tilde\alpha$ that is independent of the time variable $t$ is called a {\it stationary Markovian control}. Denote the set of all Markovian controls (resp., stationary Markovian controls) by $\calA$ (resp., $\calA_s$). Occasionally, we refer to $\alpha\in\calA$ as a measurable mapping $\alpha:\R_+\to\calQ[\A]$, and to $\alpha\in\calA_s$ as an element of $\calQ[\A]$. 

The dynamics of the representative player $X=X^\al$ are given by:
\[
X_t = X_0 + \int_0^t \int_{\A^{d}} \sum_{y\in [d]} (y-X_{s^-})\boldsymbol{1}_{\{\xi_y\in(0,\al_y(s,X_{s^-}))\}} \calN(ds,d\xi), \qquad t\in\R_+,
\] where  
$\PP\circ(X_0)^{-1}$ is a fixed distribution from $\calP([d])$. For simplicity throughout the paper, we assume that $X_0=x_0\in[d]$ is fixed. Also, $\calN$ is a Poisson random measure with intensity measure $\nu$ given by
\begin{equation}\label{n_intensity_measure}
\nu(E) := \sum_{y\in [d]}\text{Leb}(E\cap \A^{d}_y),
\end{equation} with $\A^{d}_y := \{u\in\Xi \mid u_x = 0 \text{ for all }x\neq y\}$, and where $\text{Leb}$ is the Lebesgue measure on $\R$.  The Poisson random measure is the idiosyncratic noise.

To model the continuum of players, we consider a flow of measures, given by the deterministic and measurable function $\mu:\R_+\to\calP([d])$, satisfying the initial condition $\mu(0)=\PP\circ(X_0)^{-1}$. The representative player responds to the flow of measures $\mu$ by minimizing its private cost. In the discounted case, with discount factor $r>0$, the cost to the representative player using the control $\al$ and with the other players distributed at time $t$ according to $\mu(t)$ is:
\begin{align}\notag
J_r(\al,\mu):=\E\Big[\int_0^\iy e^{-rt}\left\{f(X_t, \al(t,X_t))+F(X_t,\mu(t))\right\}dt\Big],
\end{align} where $f$ and $F$ are real-valued measurable functions, such that $f(x,a)$ does not depend on $a_x$. A few times, we will consider the cost beginning at a particular time $t_0\in\R_+$ and where $X_{t_0}=x\in [d]$. In this case, we denote:
\[
J_r(t_0,x,\al,\mu) := \E\Big[\int_{t_0}^\iy e^{-rt}\left\{f(X_t, \al(t,X_t))+F(X_t,\mu(t))\right\}dt\Big| X_{t_0} = x \Big].
\]

\begin{definition}[Discounted Mean Field Equilibrium (MFE)]\label{def:disc_MFE} Let $(\tilde \al,\tilde\mu) \in \calA \times \calC^1(\R_+,\calP([d]))$ and define a jump process $(X^{\tilde \al}_t)_{t\geq 0}$ for the representative player on $[d]$. We say that $(\tilde \al,\tilde\mu)$ is a discounted MFE (starting at $x_0\in [d])$ if:
\begin{itemize}
    \item $X_0^{\tilde \al}=x_0$;
    \item $\PP\circ (X^{\tilde \al}_t)^{-1} = \tilde\mu_t$;
    \item $J_r(\tilde\al,\tilde\mu)=\argmin_{\al'\in\calA} J_r(\al',\tilde\mu)$.
\end{itemize}
\end{definition}

In the ergodic case, the cost is a long-time average:
\begin{align}\notag
J_0(\al,\mu):=\limsup_{T\to\iy}\frac{1}{T}\E\Big[\int_0^T \left\{f(X_t, \al(t,X_t))+F(X_t,\mu(t))\right\}dt\Big].
\end{align} 
As in \cite[Definition~2.1]{MR3597374}, we define a stationary ergodic MFE as a fixed point to an ergodic control problem in the following sense:
\begin{definition}[Stationary Ergodic MFE]\label{def:stat_erg_MFE} Let $(\tilde \al,\tilde\mu) \in \calA_s \times \calC^1(\R_+,\calP([d]))$ and define a jump process $(X^{\tilde \al}_t)_{t\geq 0}$ for the representative player on $[d]$. We say that $(\tilde \al,\tilde\mu)$ is a stationary ergodic MFE (starting at $x_0\in [d])$ if:
\begin{itemize}
    \item $X_0^{\tilde \al}=x_0$;
    \item $\PP\circ (X^{\tilde \al}_t)^{-1} = \tilde\mu_t$;
    \item $J_0(\tilde\al,\tilde\mu)=\argmin_{\al'\in\calA_s} J_0(\al',\tilde\mu)$.
\end{itemize}
\end{definition} We will demonstrate in Proposition \ref{prop:statDisc_erg_mfg} that the unique solution of the ergodic MFG system \eqref{erg_MFG} naturally characterizes a stationary ergodic MFE and vice versa.

\subsection{Assumptions}\label{sec:n_assumptions} 

For the MFGs, we introduce the Hamiltonian $H:[d]\times\R^d\to\R$, given by
\begin{align}\notag
H(x,p):= \inf_{a}\Big\{f(x,a)+a\cdot p\Big\},
\end{align} 
where the infimum is taken over {\it rate vectors} $a\in\R^d$, such that for any $y\ne x$, $a_y\in\A$ and $a_x=-\sum_{y,y\ne x}a_y$. 
We now make a few standard assumptions on the model. 
\begin{itemize}
    \item[($A_1$)] The action space is  $\A:=[\mathfrak{a}_l,\mathfrak{a}_u]$, for some $0<\mathfrak{a}_l < \mathfrak{a}_u<\iy$.
\end{itemize} 

\begin{itemize}
    \item[($A_2$)] There exists a unique optimal rate selector for the Hamiltonian. Namely, there is a measurable function $\gamma^*$ that maps any $(x,p)\in[d]\times\R^d$, to a unique rate vector $a$, satisfying  for any $y\ne x$, $a_y\in\A$ and $a_x=-\sum_{y,y\ne x}a_y$: 
\begin{align}\notag
 \gamma^*(x,p):=\argmin_{a}\Big\{f(x,a)+a\cdot p\Big\}.
\end{align} 
\end{itemize} We note that it is enough for $f$ to be strictly convex (in $\al$) to ensure that $\gamma^*$ has a unique minimizer. Moreover, a sufficient condition for $\gamma^*$ to be globally Lipschitz is that $f$ is uniformly convex. Whenever $H$ is differentiable, \begin{align}
    \label{gamma_H}
\gamma^*(x,p) = D_p H(x,p),
\end{align}
see e.g., \cite[Proposition 1]{Gomes2013}.

\begin{itemize}
    \item[($A_3$)] For any $x\in[d]$, the mean field cost $F(x,\cdot)\in\calC^1(\calP([d]))$ and $D^\eta F(x,\cdot)$ is Lipschitz.
\end{itemize} As a result, both $F(x,\cdot)$ and $D^\eta F(x,\cdot)$ are bounded since they are defined on compact sets. 
\begin{itemize}
    \item[($A_4$)] 
    The mean field cost is monotone in the Lasry--Lions sense:
    \begin{equation}
        \sum_{x\in [d]} (F(x,\eta) - F(x,\hat\eta))(\eta_x-\hat\eta_x) \geq 0,
    \end{equation}
\end{itemize} for all $\eta,\hat\eta\in\calP([d])$. Since $F(x,\cdot)$ is $\calC^1$ regular, and Lasry--Lions monotone, this is equivalent to having for all $m\in\calM_0$ and all $\eta\in\calP([d])$ that:
\begin{equation}\label{monotone_2}
    \sum_{x\in [d]} m_x D^\eta_1 F(x,\eta) \cdot m \geq 0.
\end{equation} Finally, we require that: 
\begin{itemize}
    \item[($A_5$)] The Hamiltonian $H$ is $\calC^2$ with respect to $p$. Moreover, on any compact set $[-K,K]$, we will assume $D_p H$ and $D^2_{pp}H$ are Lipschitz in $p$ and there exists $C_{2,H}>0$ such that:
\[
D^2_{pp} H(x,p) \leq -C_{2,H}.
\]
\end{itemize} We note that it is enough to assume these properties for compact sets since essentially, any $p$ argument of $H$ (and of $\gamma^*$) will be uniformly bounded (see, for instance, \cite[Remark 1]{cec-pel2019} and Lemma \ref{lem:sol_bound} in the sequel). 

Throughout, we will denote the Lipschitz constant of a given function $g$ as $C_{L,g}$ and a bound as $C_g$. 

For an example that satisfies the assumption above, the reader is referred to \cite[Example 1]{cec-pel2019}.

\subsection{The MFG Systems} \label{sec:MFG_sys} Below, we introduce two discounted MFG systems, which are essential to obtain the main result. We start with the discounted MFG system that is comprised of a Hamilton--Jacobi--Bellman equation, whose solution is the value of the game, and a Kolmogorov's equation. The latter describes the flow of the distribution of players for some fixed initial data $\mu_0$. 
Namely, fix an initial distribution of players $\mu_0=(\mu_{0,x}:x\in[d])\in\calP([d])$. The {\it discounted MFG system} is given by:
\begin{equation*}
\begin{cases}
-\frac{d}{dt}u^r_x(t) = -ru^r_x(t) + H(x, \Delta_x u^r(t)) + F(x,\mu^r(t)),\\
\frac{d}{dt}\mu^r_x(t) = \sum_{y\in [d]} \mu^r_y(t)\gamma^*_x(y,\Delta_yu^r(t)),\\
\mu^r_x(0) = \mu_{0,x},
\end{cases}
\tag{MFG$_r$}\label{MFG^r}
\end{equation*}
where $u^r:\R_+ \to \R^d$ and $\mu^r:\R_+ \to \calP([d])$. After establishing some results for a stationary version of this system, we will return to \eqref{MFG^r}.

The next proposition states that by omitting the initial condition on $\mu^r$, then for sufficiently small $r>0$, there is a unique stationary solution to \eqref{MFG^r} and write this system as
\begin{align}\label{stat_MFG_r}
\begin{cases}
 -r\bar u^r_x+H(x,\Delta_x\bar u^r)+F(x,\bar \mu^r)=0,\\
\sum_{y\in[d]}\bar\mu^r_y\gamma^*_x(y,\Delta_y\bar u^r)=0,
\end{cases}
\end{align} with constants  $\bar\mu^r\in\calP([d])$ and $\bar u^r \in\R^d$. Moreover, it relates the discounted MFG system \eqref{stat_MFG_r} to the ergodic MFG system \eqref{erg_MFG}, proves that the ergodic MFG system \eqref{erg_MFG} has a unique solution without depending on the additional contractive assumptions used in \cite{Gomes2013}, and elucidates the relationship between the stationary MFG solution and the stationary ergodic MFE.

\begin{proposition}[Stationary Systems]\label{prop:statDisc_erg_mfg}
There exists $r_0>0$, such that for any $r\in(0,r_0)$ the following holds:
\begin{enumerate}[(i)]
    \item There exists a unique solution $(\bar u^r,\bar \mu^r)\in (0,\iy)^d \times \calP([d])$ to \eqref{stat_MFG_r}. Additionally, there is a positive constant $C$, such that for any $r\in(0,r_0)$, $|r\bar u^r|\le C$ and for any $x\in[d]$, $\bar\mu^r_x\ge C^{-1}$.
    
    \item  There exists a unique solution $(\bar \varrho,\bar  u, \bar\mu)\in\R \times \calM_0\times\calP([d])$ to \eqref{erg_MFG}
    . Moreover, there exists $C>0$ such that for any $r\in(0,r_0)$, 
\begin{align}\notag
\max_{x\in[d]}|r\bar u^r_x-\bar\varrho|+\Big(\sum_{x\in[d]}|\Delta_x(\bar u^r-\bar u)|^2\Big)^{1/2}+|\bar\mu^r-\bar\mu|\le Cr^{1/2}.
\end{align}
Together with (i), $\bar\mu_x>0$ for any $x\in[d]$. 

\item From the solution of \eqref{erg_MFG} one can naturally construct a stationary ergodic MFE $(\bar\al, \mu)$ in the sense of Definition \ref{def:stat_erg_MFE}, where $\bar\al_{xy} := \gamma^*_y(x,\Delta_x \bar u)$ and $\mu$ is the flow of distributions of the jump process of the representative player $(X^{\bar\al}_t)_{t\geq 0}$ on $[d]$. Conversely, any stationary ergodic MFE $(\tilde \al, \tilde\mu)$ corresponds to the unique solution to the MFG system \eqref{erg_MFG},  $(\bar\varrho,\bar u,\bar\mu)$, in the sense that $\tilde\al$ is of the form $\tilde\al_{xy} = \gamma^*_y(x,\Delta_x \bar u)$, the flow of distributions satisfy $\tilde\mu(t) \to \bar\mu$ as $t\to\iy$, and $\bar\vr$ is the value of playing the equilibrium $(\tilde\al,\tilde\mu)$.
\end{enumerate}

\end{proposition}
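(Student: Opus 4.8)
Here is how I would prove Proposition~\ref{prop:statDisc_erg_mfg}, treating the four claims in the order listed.

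\emph{Part (i).} For fixed $r>0$ I would obtain a solution of \eqref{stat_MFG_r} by a Brouwer fixed-point argument on $\calP([d])$: given $\eta\in\calP([d])$, the discounted HJB $-ru_x+H(x,\Delta_x u)+F(x,\eta)=0$ has a unique solution $u^r[\eta]$ (the value of a discounted finite-state control problem with bounded running cost, i.e.\ the fixed point of a contraction whose modulus is $<1$ since exit rates are bounded below by $\mathfrak{a}_l>0$), and $\eta\mapsto u^r[\eta]$ is continuous; then $\eta\mapsto\gamma^*(\cdot,\Delta_\cdot u^r[\eta])\in\calQ[\A]$, and composing with the (continuous, well-defined by irreducibility) invariant-distribution map gives a continuous self-map of $\calP([d])$ whose fixed point solves \eqref{stat_MFG_r}. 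The bound $\abs{r\bar u^r}\le C$ is the estimate $\abs{ru^r_x}\le\norm{f}_\infty+\norm{F}_\infty$ for discounted values (finite by ($A_1$), ($A_3$)); $\bar\mu^r_x\ge C^{-1}$ is the uniform lower bound on the invariant measure of a chain with rates in $[\mathfrak{a}_l,\mathfrak{a}_u]$; and the uniform increment bound $\abs{\Deltadot\bar u^r}\le C$ (needed repeatedly below, cf.\ Lemma~\ref{lem:sol_bound}) comes from a coupling estimate with coupling time controlled by $d,\mathfrak{a}_l,\mathfrak{a}_u$ alone. For uniqueness I would run the Lasry--Lions duality: writing $w=\bar u^1-\bar u^2$, $\nu=\bar\mu^1-\bar\mu^2$ for two solutions, I test each HJB against the opposite invariant measure and each Kolmogorov identity $\sum_y\bar\mu^i_y\gamma^*(y,\Delta_y\bar u^i)\cdot\Delta_y w=0$ against $w$, and use \eqref{monotone_2} together with the uniform concavity $D^2_{pp}H\le-C_{2,H}$ from ($A_5$) (and $\gamma^*=D_pH$, \eqref{gamma_H}) to get $\tfrac{C_{2,H}}{2}\sum_x(\bar\mu^1_x+\bar\mu^2_x)\abs{\Delta_x w}^2\le r\sum_x w_x\nu_x$. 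Since $\nu$ satisfies the divergence-form identity $\nu Q^2=-\bar\mu^1(Q^1-Q^2)$ with $Q^i_{yx}=\gamma^*_x(y,\Delta_y\bar u^i)$, and $Q^2$ restricted to $\calM_0$ is invertible uniformly over $\calQ[\A]$, one has $\abs{\nu}\le C\abs{\Deltadot w}$; as also $\osc(w)\le\abs{\Deltadot w}$ and $\bar\mu^i_x\ge C^{-1}$, the right-hand side is $\le C'r\sum_x(\bar\mu^1_x+\bar\mu^2_x)\abs{\Delta_x w}^2$. Choosing $r_0$ with $C'r_0<C_{2,H}/2$ forces this sum to vanish, so $w$ is constant, hence $\nu=0$, and then the HJB gives $rw=0$, i.e.\ $w=0$. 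This absorption is exactly where the restriction $r<r_0$ is needed.

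\emph{Part (ii).} Existence for \eqref{erg_MFG} follows by a vanishing-discount compactness argument: along $r_n\downarrow0$ the centered vectors $\bar u^{r_n}-\langle\bar u^{r_n}\rangle\in\calM_0$ are bounded, $r_n\langle\bar u^{r_n}\rangle$ is bounded, and $\bar\mu^{r_n}\in\calP([d])$; passing to a subsequence and using continuity of $H,F,\gamma^*$ in the rescaled equations, the limit solves \eqref{erg_MFG}, and $\bar\mu_x\ge C^{-1}>0$ because each $\bar\mu^{r_n}_x\ge C^{-1}$. Uniqueness is the same duality computation as in part (i), but now the value terms cancel (both invariant measures have unit mass, so $(\bar\varrho^1-\bar\varrho^2)\sum_x(\bar\mu^1_x-\bar\mu^2_x)=0$) and there is no $r$-term, so one directly gets $\tfrac{C_{2,H}}{2}\sum_x(\bar\mu^1_x+\bar\mu^2_x)\abs{\Delta_x(\bar u^1-\bar u^2)}^2\le0$; hence $\bar u^1-\bar u^2$ is constant, so $\bar u^1=\bar u^2$ (both in $\calM_0$), and then $\bar\mu^1=\bar\mu^2$ (same generator) and $\bar\varrho^1=\bar\varrho^2$. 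For the $r^{1/2}$ rate I would compare $(\bar u^r,\bar\mu^r)$ directly with the (now unique) $(\bar\varrho,\bar u,\bar\mu)$: testing $r\bar u^r_x=H(x,\Delta_x\bar u^r)+F(x,\bar\mu^r)$ and $\bar\varrho=H(x,\Delta_x\bar u)+F(x,\bar\mu)$ against $\bar\mu^r$ and $\bar\mu$ in both combinations and subtracting, the monotone $F$-term is discarded by ($A_4$), leaving $\sum_x\big(H(x,\Delta_x\bar u^r)-H(x,\Delta_x\bar u)\big)(\bar\mu^r_x-\bar\mu_x)\le r\sum_x\bar u^r_x(\bar\mu^r_x-\bar\mu_x)$; on the left, uniform concavity of $H$ and the Kolmogorov identities $\sum_x\bar\mu^r_x\gamma^*(x,\Delta_x\bar u^r)\cdot\Delta_x g=0$, $\sum_x\bar\mu_x\gamma^*(x,\Delta_x\bar u)\cdot\Delta_x g=0$ (with $g=\bar u^r-\bar u$) lower-bound it by $\tfrac{C_{2,H}}{2}\sum_x(\bar\mu^r_x+\bar\mu_x)\abs{\Delta_x(\bar u^r-\bar u)}^2$, while the right side is $O(r)$ since $\sum_x(\text{const})(\bar\mu^r_x-\bar\mu_x)=0$ and $\osc(\bar u^r)\le\abs{\Deltadot\bar u^r}\le C$; hence $\abs{\Deltadot(\bar u^r-\bar u)}^2\le Cr$. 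Then $\abs{\bar\mu^r-\bar\mu}\le C\abs{Q^r-Q}\le C'\abs{\Deltadot(\bar u^r-\bar u)}\le C''r^{1/2}$ by Lipschitzness of $\gamma^*$ (from ($A_2$)/($A_5$)) and the divergence-form perturbation estimate of Section~\ref{sec:useful_lemmas}, and subtracting the two HJB relations gives $\abs{r\bar u^r_x-\bar\varrho}\le C(\abs{\Delta_x(\bar u^r-\bar u)}+\abs{\bar\mu^r-\bar\mu})\le Cr^{1/2}$, which is the asserted bound.

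\emph{Part (iii).} Given the solution $(\bar\varrho,\bar u,\bar\mu)$ of \eqref{erg_MFG}, put $\bar\al_{xy}:=\gamma^*_y(x,\Delta_x\bar u)\in\calA_s$ and let $\mu$ be the flow of laws of $X^{\bar\al}$ started at $x_0$. As the rates lie in $\A$, $X^{\bar\al}$ is an irreducible finite chain, so $\mu_t\to\bar\mu$ geometrically ($\bar\mu$ being its invariant law by the second equation of \eqref{erg_MFG}); hence for every $\al'\in\calA_s$ the Cesàro average defining $J_0(\al',\mu)$ equals that of the frozen ergodic control problem with running cost $f(x,a)+F(x,\bar\mu)$. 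Since $(\bar\varrho,\bar u)$ solves that problem's ergodic HJB $\bar\varrho=\inf_a\{f(x,a)+a\cdot\Delta_x\bar u\}+F(x,\bar\mu)$, a standard verification argument (Dynkin's formula plus the HJB inequality, the $\bar u$-boundary terms contributing $O(1/T)$) shows $\bar\al$ is optimal with value $\bar\varrho$, so $(\bar\al,\mu)$ is a stationary ergodic MFE in the sense of Definition~\ref{def:stat_erg_MFE}. Conversely, for any stationary ergodic MFE $(\tilde\al,\tilde\mu)$, the same irreducibility gives $\tilde\mu_t\to\bar\mu'$, the invariant law of the chain with generator $\tilde\al$, so its ergodic cost again reduces to the frozen problem with cost $f(x,a)+F(x,\bar\mu')$; optimality of $\tilde\al$ and uniqueness of the minimizer in ($A_2$) force $\tilde\al_{xy}=\gamma^*_y(x,\Delta_x u')$ where $(\varrho',u')$ solves that frozen ergodic HJB, and invariance of $\bar\mu'$ under $\tilde\al$ is precisely the second equation of \eqref{erg_MFG}, so $(\varrho',u'-\langle u'\rangle,\bar\mu')$ solves \eqref{erg_MFG}; the uniqueness from part (ii) identifies it with $(\bar\varrho,\bar u,\bar\mu)$, yielding $\tilde\al_{xy}=\gamma^*_y(x,\Delta_x\bar u)$, $\tilde\mu_t\to\bar\mu$, and equilibrium value $\bar\varrho$.

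I expect the main obstacle to be the $r^{1/2}$ rate in part (ii): it is the one step that must exploit the duality between the two equations of the MFG system together with the quantitative strong concavity of $H$, and care is needed both in applying the concavity inequality in the correct direction against the correct measure and in feeding the resulting increment bound on $\bar u^r-\bar u$ into the Markov-chain perturbation estimate to control $\bar\mu^r-\bar\mu$. The secondary delicate point is the absorption step in part (i) that pins down $r_0$.
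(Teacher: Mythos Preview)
Your proposal is correct and follows essentially the same strategy as the paper for parts (i) and (ii): the Brouwer fixed-point construction (the paper composes the two maps in the opposite order and applies Brouwer on the value cube $[0,C_{f+F}/r]^d$ rather than on $\calP([d])$, but this is immaterial), the Lasry--Lions duality with absorption for small $r$ to get uniqueness, the vanishing-discount compactness for existence of the ergodic solution, and the duality/concavity comparison between \eqref{stat_MFG_r} and \eqref{erg_MFG} for the $r^{1/2}$ rate all match closely. One cosmetic difference: the paper deduces uniqueness for \eqref{erg_MFG} as a corollary of the $r^{1/2}$ convergence (two ergodic solutions are both $O(r^{1/2})$-close to the same $\bar u^r$), whereas you run the duality directly with the $r$-term absent; both work.

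The one genuine divergence is the converse direction of part (iii). You argue at a higher level: since $\tilde\mu_t\to\bar\mu'$, the MFE optimality reduces to the frozen ergodic control problem with cost $f+F(\cdot,\bar\mu')$, and then ``optimality of $\tilde\al$ and uniqueness of the minimizer in ($A_2$) force $\tilde\al_{xy}=\gamma^*_y(x,\Delta_x u')$'' where $(\varrho',u')$ solves that frozen ergodic HJB. This is correct but presupposes the standard ergodic verification theorem (that any optimal stationary control must realize the argmin in the HJB). The paper makes this step self-contained: it explicitly constructs the potential as the bias vector $\tilde u=\int_0^\infty(e^{\tilde\al t}-[\tilde\mu^\infty])\ell\,dt-\langle\cdot\rangle$, checks algebraically that $\tilde\al\tilde u=[\tilde\mu^\infty]\ell-\ell$, and then runs a contradiction argument via It\^o's formula to show that if $\tilde\al_z\neq\gamma^*(z,\Delta_z\tilde u)$ at some state $z$, the control $\gamma^*(\cdot,\Delta_\cdot\tilde u)$ would strictly undercut $\tilde\al$. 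Your route is shorter and cleaner if one is willing to cite ergodic MDP theory; the paper's is longer but fully internal to the paper's toolkit.
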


The next proposition establishes the existence and uniqueness of a solution to \eqref{MFG^r}. Then, the proposition proves a strong decay to the stationary solution for a solution with any initial data. By using an analysis of the linearized systems, the uniform bound passes to any difference of discounted MFG solutions while allowing for initial data in the bound. These bounds hold uniformly in $r$ in a small neighborhood of $0$. Finally, the proposition connects the discounted MFG system \eqref{MFG^r} to the discounted MFE.

\begin{proposition}[Discounted MFG System]
\label{prop:MFG}
There exists $r_0>0$ such that for any fixed $r\in (0,r_0)$, the following holds:
\begin{enumerate}[(i)]
\item The system \eqref{MFG^r} admits a unique solution. 
\item There exist $\gamma,r_0>0$ and $C>0$, independent of $\mu_0$, such that for any $r\in(0,r_0)$ and $t\in\R_+$, 
\begin{align}\notag
|\mu^r(t)-\bar\mu^r|+|u^r(t)-\bar u^r|\le Ce^{-\gamma t}. 
\end{align} 
\item Let $(u^r,\mu^r)$ and $(\tilde u^r, \tilde \mu^r)$ be solutions to \eqref{MFG^r} where the initial data $\mu_0:=\mu^r(0)$ and $  \tilde \mu_0:=\tilde\mu^r (0)$ is allowed to differ. Then, there exist $C,\gamma >0$ independent of $\mu_0$, $\tilde\mu_0$, such that for all $r\in (0,r_0)$:
\begin{equation}\label{sol_ineq}
    |\mu^r (t) - \tilde\mu^r(t)|+ |(u^r(t) - \tilde u^r(t)| \leq Ce^{-\gamma t}|\mu_{0} - \tilde \mu_{0}|.
\end{equation} 
\item Any solution $(u^r, \mu^r)$ to the discounted MFG system \eqref{MFG^r} gives rise to a discounted MFE $(\al^r,\mu^r)$, where $\al^r_{xy}(t):=\gamma^*_y(x,\Delta_x u^r(t))$. Conversely, any discounted MFE $(\tilde\al^r,\tilde\mu^r)$ naturally corresponds to the solution $(\tilde u^r,\tilde\mu^r)$ to \eqref{MFG^r} where $\tilde u^r_x(t) := e^{rt} J_r(t,x,\tilde\al^r,\tilde\mu^r)$.
\end{enumerate}
\end{proposition}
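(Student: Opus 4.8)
The four parts split naturally: (i) is a fixed-point argument together with a monotonicity computation, (ii)--(iii) carry the substance of the proposition, and (iv) is a verification. \emph{For (i)}, I would obtain existence by a Schauder fixed-point argument on the flow of measures. Given a continuous flow $\mu:\R_+\to\calP([d])$, the backward line of \eqref{MFG^r} has a unique bounded solution on $\R_+$, namely the value function $u^r[\mu]_x(t):=\inf_{\al'}e^{rt}J_r(t,x,\al',\mu)$: it is bounded since the running cost $f(x,\gamma^*(x,\cdot))+F(x,\mu)$ is bounded (optimal rates lie in the compact set $\A^d$ and $F$ is bounded by $(A_3)$), it solves the Hamilton--Jacobi--Bellman equation by dynamic programming, and it is the \emph{unique} bounded solution by a verification estimate (Dynkin's formula for $e^{-rs}u^r_{X_s}(s)$ applied along an arbitrary control and along the optimal feedback $\gamma^*_y(x,\Delta_x u^r[\mu](t))$, with the horizon sent to $\infty$, identifies any bounded solution with the value function). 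Feeding this feedback into the forward Kolmogorov line with $\mu(0)=\mu_0$ yields a new flow $\Phi(\mu)$; because the rates lie in $\A$, $\Phi$ maps the convex set of $\calP([d])$-valued flows with Lipschitz constant $(d-1)\mathfrak{a}_u$ into itself, this set is compact for local uniform convergence by Arzel\`a--Ascoli, and $\Phi$ is continuous there (standard stability of the value function in $\mu$ on compact time windows, with the tail controlled by the $e^{-rs}$ weight, plus continuity of $\gamma^*$ and continuous dependence of a linear ODE on its coefficients), so Schauder's theorem produces a solution of \eqref{MFG^r}. For uniqueness I would run the Lasry--Lions duality identity: for two solutions with a common $\mu_0$, setting $\phi(t):=\sum_{x\in[d]}(u^{r,1}_x(t)-u^{r,2}_x(t))(\mu^{r,1}_x(t)-\mu^{r,2}_x(t))$, the relations $\sum_x\gamma^*_x(y,\cdot)=0$, the uniform concavity $D^2_{pp}H\le-C_{2,H}$ of $(A_5)$, and the monotonicity \eqref{monotone_2} of $F$ give
\[
\tfrac{d}{dt}\phi(t)\ \le\ r\phi(t)-\tfrac{C_{2,H}}{2}\sum_{x\in[d]}(\mu^{r,1}_x+\mu^{r,2}_x)\,|\Delta_x(u^{r,1}-u^{r,2})|^2 ;
\]
hence $e^{-rt}\phi(t)$ is non-increasing, vanishes at $t=0$, and (being bounded) tends to $0$ at infinity, forcing $\phi\equiv0$ and $\Delta_x(u^{r,1}-u^{r,2})\equiv0$ for $t>0$; then $u^{r,1}-u^{r,2}$ is a multiple of $\vec1$, the rates agree so $\mu^{r,1}\equiv\mu^{r,2}$ by ODE uniqueness, and the $-ru$ term in the HJB forces $u^{r,1}\equiv u^{r,2}$.

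\emph{For (ii) and (iii)}, note first that (ii) is the case of (iii) where $(\tilde u^r,\tilde\mu^r)$ is the time-independent solution $(\bar u^r,\bar\mu^r)$ of \eqref{MFG^r} supplied by Proposition \ref{prop:statDisc_erg_mfg} (since $|\mu_0-\bar\mu^r|\le2$), so it suffices to prove \eqref{sol_ineq}. Following \cite{car-por}, the crux is that the difference $v:=u^r-\tilde u^r$, $\rho:=\mu^r-\tilde\mu^r$ of two solutions solves a \emph{linear} forward--backward system: writing increments of $H$, of $F$, and of the Kolmogorov drift as integrals of their derivatives, $H(x,\Delta_x u^r)-H(x,\Delta_x\tilde u^r)=b^r(t,x)\cdot\Delta_x v$ with $b^r(t,\cdot)\in\calQ[\A]$ an average of the optimal-rate vectors $\gamma^*=D_pH$; $F(x,\mu^r)-F(x,\tilde\mu^r)=G^r(t,x)\cdot\rho$ with $G^r$ bounded and $\sum_x m_xG^r(t,x)\cdot m\ge0$ by \eqref{monotone_2}; and the drift increment equals $\sum_y\rho_y\gamma^*_x(y,\Delta_y u^r(t))+S_x$, with $S$ linear in $\Delta v$ and with bounded coefficients built from $D^2_{pp}H$. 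So $\rho$ solves a divergence-form (discrete Fokker--Planck) equation with rates bounded below by $\mathfrak{a}_l>0$ and a source controlled by $\Delta v$, while $v$ solves a ``viscous'' transport equation with generator $b^r(t,\cdot)$, zeroth-order coefficient $-r$, and source $G^r\rho$---precisely the two model equations studied in Section \ref{sec:useful_lemmas}---and $r$ enters only through the benign term $-rv$, not at all in the forward equation, which is what makes the estimates uniform in $r$. The argument is then closed by dovetailing: (a) the Section \ref{sec:useful_lemmas} bound for the divergence-form equation, which uses the uniform irreducibility/spectral gap of any chain with rates in $\A$ to control $|\rho(t)|$ by $e^{-\gamma t}|\rho(0)|$ plus an exponentially weighted average of $\|\Delta v\|$; (b) the duality identity for $t\mapsto\sum_x v_x(t)\rho_x(t)$, where the $G^r$-term has the favorable sign by \eqref{monotone_2} and the strict concavity $D^2_{pp}H\le-C_{2,H}$ produces coercive control of $\int\|\Delta v\|^2\,dt$; and (c) the Section \ref{sec:useful_lemmas} bound for the transport equation, which pushes the decay of $\rho$ onto $v$ backward in time. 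Chaining them yields $|\mu^r(t)-\tilde\mu^r(t)|+|u^r(t)-\tilde u^r(t)|\le Ce^{-\gamma t}|\mu_0-\tilde\mu_0|$ with $\gamma,C$ independent of $r\in(0,r_0)$ and of $\mu_0,\tilde\mu_0$, whence (ii).

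\emph{For (iv)}, one argues by verification within the probabilistic setup of Section \ref{sec:mfg_descrip}. Given a solution $(u^r,\mu^r)$ of \eqref{MFG^r} and $\al^r_{xy}(t):=\gamma^*_y(x,\Delta_x u^r(t))$, the law of the jump process $X^{\al^r}$ solves the linear Kolmogorov ODE with rates $\al^r$ and initial law $\delta_{x_0}=\mu_0$, which is the forward line of \eqref{MFG^r}, so $\PP\circ(X^{\al^r}_t)^{-1}=\mu^r_t$ by ODE uniqueness; the verification estimate from the proof of (i) then gives $e^{rt_0}J_r(t_0,x,\al',\mu^r)\ge u^r_x(t_0)=e^{rt_0}J_r(t_0,x,\al^r,\mu^r)$ for every competing $\al'$, so $\al^r$ minimizes $J_r(\cdot,\mu^r)$ and $(\al^r,\mu^r)$ is a discounted MFE. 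Conversely, given a discounted MFE $(\tilde\al^r,\tilde\mu^r)$, the map $\tilde u^r_x(t):=e^{rt}J_r(t,x,\tilde\al^r,\tilde\mu^r)$ is the value of the control problem against the fixed flow $\tilde\mu^r$, hence solves the HJB line of \eqref{MFG^r} by dynamic programming, while $\tilde\mu^r=\PP\circ(X^{\tilde\al^r})^{-1}$ solves the Kolmogorov line with $\tilde\mu^r(0)=\delta_{x_0}$, and the uniqueness of the optimal rate selector from $(A_2)$ forces $\tilde\al^r_{xy}(t)=\gamma^*_y(x,\Delta_x\tilde u^r(t))$; thus $(\tilde u^r,\tilde\mu^r)$ solves \eqref{MFG^r}.

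\emph{Main obstacle.} The substance is entirely in (ii)--(iii): obtaining the rate $\gamma$ and the constant $C$ uniformly in $r$ near $0$. A Gr\"onwall argument would do on any finite horizon but destroys uniformity, so one must instead make the spectral-gap estimate for the forward equation, the duality/monotonicity identity, and the backward estimate for the transport equation reinforce one another; keeping track of the indefinite contributions---the $-rv$ term and the quadratic linearization remainders---and verifying that every constant produced depends only on $d$, $\mathfrak{a}_l$, $\mathfrak{a}_u$, $C_{2,H}$, and the Lipschitz and boundedness constants of $H$ and $F$ is exactly the role of the preliminary lemmas of Section \ref{sec:useful_lemmas}.
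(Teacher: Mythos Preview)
Your treatment of (i) and (iv) is sound. For (i) you take a different route than the paper: the paper constructs the solution as a limit of finite-horizon systems \eqref{MFG^r_T} as $T\to\infty$ (showing they form a Cauchy sequence via Lemmas \ref{lem:M0}, \ref{lem:v}, \ref{lem:val-meas-duality}), whereas you apply Schauder directly to the best-response map on $\R_+$. Both work; the paper even remarks that a direct fixed-point argument is available but omitted. Your uniqueness argument via the duality identity is essentially Lemma \ref{lem:val-meas-duality} specialized to common initial data, and is correct. Part (iv) matches the paper's verification argument.

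The genuine gap is in (ii)--(iii). You propose to deduce (ii) from (iii), and for (iii) you linearize the difference $(u^r-\tilde u^r,\mu^r-\tilde\mu^r)$ directly and claim that ``chaining'' Lemma \ref{lem:M0}, the duality identity, and Lemma \ref{lem:v} closes the estimate. This is exactly where the paper (following \cite{car-por}) warns that naive arguments fail: Lemma \ref{lem:M0} bounds $\rho$ by past values of $\Delta v$, Lemma \ref{lem:v} bounds $\Delta v$ by future values of $\rho$, and the duality gives only a \emph{weighted} $L^2$ bound $\int e^{-rt}\sum_x(\mu^r_x+\tilde\mu^r_x)|\Delta_x v|^2\,dt$---with a weight that need not be bounded below and with the $e^{-rt}$ factor degenerating as $r\to0$. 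Iterating these does not produce exponential decay with constants uniform in $r$; the resulting Volterra-type inequality has a kernel whose operator norm is not small.

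The paper's remedy, which you do not invoke, is to linearize around the \emph{stationary} discounted solution $(\bar u^r,\bar\mu^r)$ of \eqref{stat_MFG_r}, whose measure is bounded below uniformly in $r$ (Proposition \ref{prop:statDisc_erg_mfg}(i)). Section \ref{sec:preparation} then proves exponential decay for that linearized system via a non-trivial compactness argument (the function $\rho^r(t)=\sup_{|m_0|\le1}e^{-rt}m(t)\cdot v(t)$, showing $\rho_\infty=0$, then a self-improving inequality $\rho^r(t+\tau)\le\tfrac14\rho^r(t)$); this is Lemmas \ref{lem:lin_MFG_r_exp}--\ref{lem:lin_MFG_r_exp_A_B}. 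With these in hand the logical order is the \emph{reverse} of yours: (ii) is established first, by a Schauder argument on a map built from the stationary-linearized system (for $|\mu_0-\bar\mu^r|$ small, then a bootstrap removes the smallness); only then does (iii) follow, by recognizing $(u^r-\tilde u^r,\mu^r-\tilde\mu^r)$ as a solution of \eqref{lin_MFG_r_around_stat} with $A,B$ that decay exponentially \emph{because of} (ii), and invoking Lemma \ref{lem:lin_MFG_r_exp_A_B}. Your outline misses both the role of the stationary solution and the $\rho^r$ mechanism, and your reversed dependency (ii)$\Leftarrow$(iii) therefore rests on an unproved (iii).
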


In fact, we can establish the existence and uniqueness of a solution to \eqref{MFG^r} for any $r>0$ by using a fixed point theorem. However, we do not include this argument here since it is lengthy and not of much use to the focus of this paper. We also note that the proof of part (iv)---the relationship between \eqref{MFG^r} and the discounted MFE---holds for any $r>0$.

\subsection{The Master Equations}\label{sec:MEs}
Now, we are ready to describe the discounted and ergodic master equations and the connections to their respective MFGs. The {\it discounted master equation} is given by
\begin{align}
\tag{ME$_r$}\label{ME^r}
rU_r(x,\eta)= H(x,\Delta_x U_r(\cdot,\eta))+\sum_{y,z\in[d]}\eta_y D^\eta_{yz} U_r(x,\eta)\gamma^*_z(y,\Delta_y U_r(\cdot,\eta))+F(x,\eta),
\end{align} where $U_r:[d]\times\calP([d])\to\R$. The following proposition summarizes significant uniform-in-$r$ regularity properties of the solution of the master equations and connect them to their respective MFGs.
\begin{proposition}[Discounted Master Equation]\label{prop:MEr}
There exists $r_0>0$, such that for any $r\in(0,r_0)$ the following holds:

\begin{enumerate}[(i)]

    \item The master equation \eqref{ME^r} admits a solution.
    
    \item The solution $U_r$ is unique and satisfies $U_r(x,\mu_0) = u^r_x(0)$ where $(u^r,\mu^r)$ solves \eqref{MFG^r} with $\mu^r(0) = \mu_0$. In particular, $U_r(x,\bar\mu^r) = \bar u^r_x$ where $(\bar u^r, \bar\mu^r)$ solves the stationary discounted MFG system \eqref{stat_MFG_r}.
    
    \item $U_r$ is differentiable with respect to $\eta$ and there is a constant $C>0$, such that for any $\eta_1,\eta_2\in\calP([d])$, $r\in (0,r_0)$, and $x\in [d]$,
    \begin{align}\notag
|rU_r(x,\eta_1)|+|D^\eta_1 U_r(x,\eta_1)|\le C\quad\text{and}\quad |D^\eta_1 U_r(x,\eta_1)-D^\eta_1 U_r(x,\eta_2)|\le C|\eta_1-\eta_2|.
\end{align}

    \item $U_r$ is monotone in the Lasry--Lions sense.
\end{enumerate}
\end{proposition}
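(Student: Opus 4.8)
The plan is to build $U_r$ out of the discounted MFG system and then transfer to it, one by one, the properties of \eqref{MFG^r} recorded in Proposition~\ref{prop:MFG}. For $\eta\in\calP([d])$ let $(u^{r,\eta},\mu^{r,\eta})$ be the unique solution of \eqref{MFG^r} with $\mu^{r,\eta}(0)=\eta$ (Proposition~\ref{prop:MFG}(i)) and set
\[
U_r(x,\eta):=u^{r,\eta}_x(0),\qquad x\in[d],\ \eta\in\calP([d]).
\]
Because \eqref{MFG^r} is autonomous, for each fixed $t\ge 0$ the time-shifted pair $\bigl(u^{r,\eta}(\cdot+t),\mu^{r,\eta}(\cdot+t)\bigr)$ again solves \eqref{MFG^r}, now with initial measure $\mu^{r,\eta}(t)$; by the uniqueness in Proposition~\ref{prop:MFG}(i) this yields the flow identity
\[
U_r\bigl(x,\mu^{r,\eta}(t)\bigr)=u^{r,\eta}_x(t),\qquad t\ge 0,
\]
which drives the rest of the argument. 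Taking $\eta=\bar\mu^r$, the solution of \eqref{MFG^r} is the constant $(\bar u^r,\bar\mu^r)$ from \eqref{stat_MFG_r}, so $U_r(x,\bar\mu^r)=\bar u^r_x$, giving the stationary statement in (ii). The bound $|rU_r(x,\eta)|\le C$ in (iii) follows by comparing with this stationary value: $|rU_r(x,\eta)-r\bar u^r_x|\le r\,C|\eta-\bar\mu^r|$ by \eqref{sol_ineq} and $|r\bar u^r|\le C$ by Proposition~\ref{prop:statDisc_erg_mfg}(i).

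For the differentiability and the derivative bounds in (iii) I would differentiate the map $\eta\mapsto(u^{r,\eta},\mu^{r,\eta})$ in the initial datum. The derivative in a direction $m\in\calM_0$ solves the linearized discounted MFG system \eqref{lin_MFG_r_around_MFG^r} set up in Section~\ref{sec:preparation}; one shows this linear system is well posed and that its solution decays like $Ce^{-\gamma t}|m|$ with $C,\gamma$ \emph{independent of $r$}. The point is that the naive Gronwall estimate degenerates as $r\downarrow 0$, so one instead reads the decay off the uniform contraction estimate \eqref{sol_ineq} of Proposition~\ref{prop:MFG}(iii) (difference two MFG solutions with nearby initial data and pass to the limit) together with the divergence-form and viscous-transport estimates of Section~\ref{sec:useful_lemmas}. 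Evaluating at $t=0$ identifies $m\mapsto$ (the $u$-component of this linearized solution at time $0$) with the directional derivative of $U_r(x,\cdot)$; via \eqref{simplex_witchcraft} this produces $D^\eta_1U_r(x,\cdot)$ and the bound $|D^\eta_1U_r(x,\eta)|\le C$ uniform in $r$. The Lipschitz estimate on $\eta\mapsto D^\eta_1U_r(x,\eta)$ costs one more differentiation: the difference of the linearized solutions around two base points $\eta_1,\eta_2$ solves a further linear system whose forcing is controlled by $Ce^{-\gamma t}|\eta_1-\eta_2|$ (again \eqref{sol_ineq}) times the Lipschitz constants of $D_pH$, $D^2_{pp}H$ and $D^\eta F$ from $(A_3)$ and $(A_5)$; feeding this into the same non-Gronwall estimates gives $|D^\eta_1U_r(x,\eta_1)-D^\eta_1U_r(x,\eta_2)|\le C|\eta_1-\eta_2|$ uniformly in $r$. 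I expect this uniform-in-$r$ Lipschitz bound to be the main obstacle of the proposition.

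Granting (iii), existence (i) is obtained by differentiating the flow identity at $t=0$. On the left, the chain rule on the simplex together with the Kolmogorov equation $\dot\mu^{r,\eta}=\sum_{y,z}\mu^{r,\eta}_y\gamma^*_z(y,\Delta_yu^{r,\eta})\,e_{yz}$ gives $\sum_{y,z}\eta_y\gamma^*_z(y,\Delta_yu^{r,\eta}(0))\,D^\eta_{yz}U_r(x,\eta)$; on the right the HJB equation gives $rU_r(x,\eta)-H(x,\Delta_xU_r(\cdot,\eta))-F(x,\eta)$, where we used $u^{r,\eta}(0)=U_r(\cdot,\eta)$. Equating the two is exactly \eqref{ME^r}. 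For uniqueness (ii): given any solution $V$ of \eqref{ME^r} (which is $\calC^1$ in $\eta$ by definition), solve the forward ODE $\dot\mu_x=\sum_y\mu_y\gamma^*_x(y,\Delta_yV(\cdot,\mu))$ with $\mu(0)=\mu_0$ --- it keeps $\mu$ in $\calP([d])$ since the rates are positive by $(A_1)$ --- and put $u_x(t):=V(x,\mu(t))$; the chain rule and \eqref{ME^r} show $(u,\mu)$ is a bounded solution of \eqref{MFG^r}, hence equals $(u^{r,\mu_0},\mu^{r,\mu_0})$ by Proposition~\ref{prop:MFG}(i), so $V(x,\mu_0)=u_x(0)=U_r(x,\mu_0)$; as $\mu_0$ was arbitrary, $V=U_r$.

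Finally, (iv) follows from the duality computation, adapted to the discounted infinite-horizon setting. For two initial data $\mu_0,\tilde\mu_0$ let $(u^r,\mu^r),(\tilde u^r,\tilde\mu^r)$ be the corresponding solutions of \eqref{MFG^r} and set $W(t):=\sum_{x\in[d]}(u^r_x(t)-\tilde u^r_x(t))(\mu^r_x(t)-\tilde\mu^r_x(t))$. Differentiating and substituting the HJB and Kolmogorov equations, the cost terms contribute $-\sum_x(F(x,\mu^r)-F(x,\tilde\mu^r))(\mu^r_x-\tilde\mu^r_x)\le 0$ by $(A_4)$, the discount contributes $rW(t)$, and the Hamiltonian/rate terms recombine --- using $\gamma^*=D_pH$ from \eqref{gamma_H} and the second-order bound $D^2_{pp}H\le -C_{2,H}$ from $(A_5)$ --- into $-\tfrac{C_{2,H}}{2}\sum_{y\in[d]}(\mu^r_y+\tilde\mu^r_y)\,|\Delta_yu^r-\Delta_y\tilde u^r|^2\le 0$. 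Hence $\tfrac{d}{dt}\bigl(e^{-rt}W(t)\bigr)\le 0$, so $e^{-rt}W(t)$ is nonincreasing; since $W(t)\to 0$ as $t\to\infty$ by \eqref{sol_ineq}, we get $W(0)\ge 0$, i.e.\ $\sum_x\bigl(U_r(x,\mu_0)-U_r(x,\tilde\mu_0)\bigr)(\mu_{0,x}-\tilde\mu_{0,x})\ge 0$, which is the Lasry--Lions monotonicity of $U_r$.
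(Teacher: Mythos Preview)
Your proposal is correct and follows essentially the same architecture as the paper's proof: define $U_r$ via the initial value of the MFG solution, use the flow identity $U_r(x,\mu^{r,\eta}(t))=u^{r,\eta}_x(t)$ both to verify \eqref{ME^r} by differentiation at $t=0$ and to deduce uniqueness by reconstructing a MFG solution from any putative solution of \eqref{ME^r}, identify $D^\eta U_r$ through the linearized system \eqref{lin_MFG_r_around_MFG^r}, and read off monotonicity from the value--measure duality of Lemma~\ref{lem:val-meas-duality}. You also correctly flag the uniform-in-$r$ Lipschitz bound on $D^\eta_1 U_r$ as the crux.

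One point worth making explicit, which the paper handles via Lemma~\ref{lemma:lipschitz_grad}: to actually \emph{identify} the $u$-component of the linearized solution with the directional derivative of $U_r$, you need a second-order estimate of the form $|\hat u^r(0)-u^r(0)-v(0)|\le C_r|\hat\mu_0-\mu_0|^2$ (Fr\'echet differentiability), not merely well-posedness and decay of the linearized system. The paper first proves this with a constant $C_r$ that may depend on $r$ (enough for existence and for differentiability at each fixed $r$), and only afterwards upgrades to an $r$-uniform Lipschitz constant for $D^\eta_1 U_r$ via a separate estimate (Proposition~\ref{prop:linearized_lipschitz}) comparing linearized solutions around two different base flows. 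Your sketch blends these two steps; keeping them separate is what makes the argument go through cleanly.
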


Property (iii) above is essential to show that $\{U_r\}_{r\in (0,r_0)}$ are sequentially compact and so enables us to obtain the existence and smoothness of the solution to the ergodic master equation. Proving this point is one of the main challenges in this work.

\begin{theorem}[Ergodic Master Equation]\label{prop:ME}
\begin{enumerate}[(i)]
    \item The master equation \eqref{ME} admits a solution $(\varrho,U_0)$.
    
    \item $U_0$ is differentiable with respect to $\eta$ and there is a constant $C>0$ such that for any $\eta_1,\eta_2\in \calP([d])$ and $x\in [d]$,
\[
|D^\eta_1 U_0(x,\eta_1) - D^\eta_1 U_0(x,\eta_2)| \leq C|\eta_1-\eta_2|.
\]

    \item $U_0$ is monotone in the Lasry--Lions sense.
    
    \item Let $(\bar\varrho,\bar u,\bar\mu)\in\R\times\calM_0\times\calP([d])$ be the unique solution to the ergodic MFG system \eqref{erg_MFG} and recall that $\vr\in\R$ is the value in the ergodic master equation. Then, $\bar\varrho = \varrho$ and $U_0(\cdot,\bar\mu) = \bar u_\cdot + k\vec{1}$, for some constant $k\in\R$. Moreover, $\varrho \in\R$ is unique and $U_0:[d]\times \calP([d])\to\R$ is unique up to an additive constant in the following sense:  if $(\hat\rho,\hat U_0)$ solves \eqref{ME}, then $\hat\rho=\rho$ and there is $k\in\R$, such that for any $\eta\in\calP([d])$, $\hat U_0(\cdot,\eta)=U_0(\cdot,\eta)+k\vec{1}$.
\end{enumerate}
\end{theorem}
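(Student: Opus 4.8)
The plan is to obtain the ergodic master equation as a vanishing-discount limit of the discounted master equations \eqref{ME^r}, leaning entirely on the uniform-in-$r$ regularity from Proposition \ref{prop:MEr}. First I would normalize: set $\hat U_r(x,\eta) := U_r(x,\eta) - U_r(1,\bar\mu^r)$ so that the family is anchored at a single point. By Proposition \ref{prop:MEr}(iii), the quantities $|rU_r(x,\eta)|$ and $|D^\eta_1 U_r(x,\eta)|$ are bounded uniformly in $r\in(0,r_0)$ and $\eta\in\calP([d])$, and $D^\eta_1 U_r(x,\cdot)$ is uniformly Lipschitz; hence the $\hat U_r$ are uniformly bounded and equi-Lipschitz on the compact simplex, and likewise their simplex-derivatives. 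By Arzel\`a--Ascoli, along any sequence $r_n\downarrow 0$ there is a subsequence such that $\hat U_{r_n}\to U_0$ uniformly and $D^\eta_1 U_{r_n}(x,\cdot)\to$ some limit uniformly; a standard argument (uniform convergence of the functions together with uniform convergence of the finite-difference quotients, using that $D^\eta_1$ is a one-sided directional derivative and the quotients converge uniformly) identifies the limit of the derivatives as $D^\eta_1 U_0(x,\cdot)$, so $U_0\in\calC^1(\calP([d]))$ in the simplex sense and $D^\eta_1 U_0$ inherits the Lipschitz bound of part (ii). Since $|r_nU_{r_n}(1,\bar\mu^{r_n})|\le C$, we may pass to a further subsequence so that $r_nU_{r_n}(1,\bar\mu^{r_n})\to\varrho$ for some $\varrho\in\R$; note $r_nU_{r_n}(x,\eta) = r_n\hat U_{r_n}(x,\eta) + r_nU_{r_n}(1,\bar\mu^{r_n})\to\varrho$ for every $x,\eta$ because $\hat U_{r_n}$ is bounded. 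Passing to the limit in \eqref{ME^r} — using continuity of $H$, $F$, and $\gamma^*$ in their arguments (Assumptions $(A_2)$, $(A_3)$, $(A_5)$), the uniform convergence of $U_{r_n}$ and of $D^\eta U_{r_n}$, and the fact that $\Delta_x U_{r_n}(\cdot,\eta)$ stays in a fixed compact set (Lemma \ref{lem:sol_bound}) — yields that $(\varrho,U_0)$ solves \eqref{ME}. This gives (i), (ii), and, by passing to the limit in the Lasry--Lions inequality of Proposition \ref{prop:MEr}(iv), part (iii).

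For part (iv), the identification with the ergodic MFG system comes from Proposition \ref{prop:MEr}(ii): $U_{r_n}(x,\bar\mu^{r_n}) = \bar u^{r_n}_x$ and $r_nU_{r_n}(x,\bar\mu^{r_n}) = r_n\bar u^{r_n}_x$. By Proposition \ref{prop:statDisc_erg_mfg}(ii), $\bar\mu^{r_n}\to\bar\mu$, $r_n\bar u^{r_n}_x\to\bar\varrho$, and $\Delta_x(\bar u^{r_n}-\bar u)\to 0$; combining with $r_nU_{r_n}(x,\bar\mu^{r_n})\to\varrho$ and continuity of $U_0$ gives $\varrho=\bar\varrho$ directly. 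To get $U_0(\cdot,\bar\mu)=\bar u_\cdot + k\vec{1}$, observe that $\hat U_{r_n}(x,\bar\mu^{r_n}) = \bar u^{r_n}_x - \bar u^{r_n}_1 \to \bar u_x - \bar u_1$ (using that the normalized vector of $\bar u^{r_n}$ converges to $\bar u\in\calM_0$, which follows from the convergence of the differences $\Delta$ plus the fact that both live in finite dimensions), while $\hat U_{r_n}(x,\bar\mu^{r_n})\to U_0(x,\bar\mu) - U_0(1,\bar\mu)$ by uniform convergence and $\bar\mu^{r_n}\to\bar\mu$. Hence $U_0(x,\bar\mu) - U_0(1,\bar\mu) = \bar u_x - \bar u_1$ for all $x$, i.e. $U_0(\cdot,\bar\mu) = \bar u_\cdot + k\vec{1}$ with $k := U_0(1,\bar\mu) - \bar u_1$.

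For the uniqueness statement in (iv), suppose $(\hat\varrho,\hat U_0)$ also solves \eqref{ME}. The strategy is to show that any such solution must coincide, on the equilibrium measure $\bar\mu$ and up to a constant shift, with the data produced by the ergodic MFG system — which is unique by Proposition \ref{prop:statDisc_erg_mfg}(ii). Concretely, I would evaluate \eqref{ME} at $\eta=\bar\mu$: the term $\sum_{y,z}\bar\mu_y D^{\eta}_{yz}\hat U_0(x,\bar\mu)\gamma^*_z(y,\Delta_y\hat U_0(\cdot,\bar\mu))$ is the generator of the Markov chain with rate matrix $\bar\al_{yz}=\gamma^*_z(y,\Delta_y\hat U_0(\cdot,\bar\mu))$ applied to $\hat U_0(\cdot,\bar\mu)$; one checks that $\bar\mu$ is invariant for this chain precisely when $(\hat\varrho,\hat U_0(\cdot,\bar\mu),\bar\mu)$ solves the ergodic MFG system after verifying the Kolmogorov equation $\sum_y\bar\mu_y\gamma^*_x(y,\Delta_y\hat U_0(\cdot,\bar\mu))=0$ — and this last identity must be extracted by a duality/monotonicity argument analogous to the uniqueness proof for \eqref{erg_MFG}, using the Lasry--Lions monotonicity of $F$ together with the strict concavity of $H$ in $p$ from $(A_5)$. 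Once $(\hat\varrho,\hat U_0(\cdot,\bar\mu) - \langle\hat U_0(\cdot,\bar\mu)\rangle,\bar\mu)$ is shown to solve \eqref{erg_MFG}, uniqueness of that system forces $\hat\varrho=\bar\varrho=\varrho$ and $\hat U_0(\cdot,\bar\mu) = U_0(\cdot,\bar\mu) + \text{const}$. To propagate this from $\eta=\bar\mu$ to all $\eta\in\calP([d])$, I would use the representation in Proposition \ref{prop:MEr}(ii) at the level of the master equation itself: namely, solutions of \eqref{ME} are constant along the flow $t\mapsto(\,\cdot\,,\mu(t))$ generated by the associated MFG dynamics, and by Proposition \ref{prop:MFG}(ii) that flow converges to $\bar\mu$ from any starting point, which pins down $\hat U_0(\cdot,\eta) - U_0(\cdot,\eta)$ to be the same constant $k$ for every $\eta$. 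The main obstacle is precisely this last propagation step and the extraction of the Kolmogorov equation for an arbitrary classical solution of \eqref{ME}: unlike in the construction, where $U_r$ comes with a built-in MFG representation, here one is handed a bare solution and must recover the probabilistic structure. I expect this to require carefully combining the characteristics/invariance argument with the contraction estimate \eqref{sol_ineq} of Proposition \ref{prop:MFG}(iii), exactly as the discounted case is handled in the proof of Proposition \ref{prop:MEr}(ii), and then taking $r\to 0$.
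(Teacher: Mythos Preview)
Your treatment of parts (i)--(iii) and of the identification $\varrho=\bar\varrho$, $U_0(\cdot,\bar\mu)=\bar u+k\vec{1}$ in (iv) is correct and matches the paper: normalize by subtracting $U_{r}(1,\bar\mu^r)$, use the uniform $\calC^1_{\text{Lip}}$ bounds from Proposition~\ref{prop:MEr}(iii), extract a limit by compactness, pass to the limit in \eqref{ME^r}, and identify with the ergodic MFG data via Proposition~\ref{prop:statDisc_erg_mfg}(ii).

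The uniqueness argument, however, has a genuine gap. Your plan is to evaluate an arbitrary solution $\hat U_0$ at $\eta=\bar\mu$ and show that $(\hat\varrho,\hat U_0(\cdot,\bar\mu),\bar\mu)$ solves \eqref{erg_MFG}; but the stationarity condition $\sum_y\bar\mu_y\gamma^*_x(y,\Delta_y\hat U_0(\cdot,\bar\mu))=0$ is not something you can ``extract'' by duality --- $\bar\mu$ is stationary for the rates built from the \emph{constructed} $U_0$, and there is no a priori reason it should be stationary for the rates built from an arbitrary solution $\hat U_0$. Your fallback, that ``solutions of \eqref{ME} are constant along the flow,'' is also not correct in the ergodic case: if you set $u(t):=\hat U_0(\cdot,\mu(t))$ along the $\hat U_0$-Kolmogorov flow, differentiating and using \eqref{ME} gives the time-dependent HJB $-\tfrac{d}{dt}u_x(t)+\varrho=H(x,\Delta_x u(t))+F(x,\mu(t))$, so $u$ is \emph{not} constant.

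The paper's route avoids both pitfalls by running characteristics from an \emph{arbitrary common} initial point. Given two solutions $U_0^1,U_0^2$, fix any $\mu_0$, let $\mu^i$ solve $\tfrac{d}{dt}\mu^i_x=\sum_y\mu^i_y\gamma^*_x(y,\Delta_y U_0^i(\cdot,\mu^i))$ with $\mu^i(0)=\mu_0$, and set $u^i_x(t):=U_0^i(x,\mu^i(t))$. Then both $(u^i,\mu^i)$ solve the \emph{same} system \eqref{sys:finite_erg_Z} with the \emph{same} initial condition. A duality computation as in Lemma~\ref{lem:val-meas-duality} gives $\int_0^T\sum_x|\Delta_x(u^1-u^2)|^2(\mu^1_x+\mu^2_x)\,dt\le C|\Delta(u^1-u^2)(T)||(\mu^1-\mu^2)(T)|$; since $\mu^i(t)\to\bar\mu$ (convergence of the time-dependent system to its stationary state), the right side vanishes as $T\to\infty$, forcing $\Delta u^1\equiv\Delta u^2$, hence $\mu^1\equiv\mu^2$ and $u^1-u^2\equiv c(\mu_0)$. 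Finally, Lipschitz continuity of $U_0^i$ and $\mu^i(t)\to\bar\mu$ show $c(\mu_0)=c(\bar\mu)$ for every $\mu_0$, so the constant is global. The key idea you are missing is to start both flows from the same $\mu_0$ (not $\bar\mu$), so that the boundary term at $t=0$ in the duality estimate vanishes automatically.
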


\subsection{Proofs Outline}

Section \ref{sec:useful_lemmas} establishes several useful lemmas which are used throughout the paper and can be skipped on a first reading. Section \ref{sec:stationary} is dedicated to proving Proposition \ref{prop:statDisc_erg_mfg}. There we show that the stationary solution to \eqref{MFG^r} is close to the ergodic MFG system \eqref{erg_MFG} up to an order of $r^{1/2}$. Then in Section \ref{sec:preparation}, we work with several discounted linearized systems that allow us to obtain both uniform estimates on the difference between solutions of the discounted MFG system with different initial conditions and to obtain regularity of the discounted and ergodic master equations. We use these linearized systems in Section \ref{sec:proof_prop_MFG} to prove Proposition \ref{prop:MFG}. Sections \ref{sec:proof_MEr} and \ref{sec:pf_thm} are dedicated to the proofs of Proposition \ref{prop:MEr} and Theorem \ref{prop:ME}, respectively.

Finally, we wish to emphasize that we will work with small discount factors. That is, all the results in this paper will hold for $r\in (0,r_0)$ where $r_0$ is small enough and $r_0$ may change throughout the paper.

\section{Preliminary Lemmas}\label{sec:useful_lemmas}

We introduce four lemmas that deal with ODEs of the general structure we will encounter throughout the paper. The first one, Lemma \ref{lem:b}, demonstrates an elementary equivalence of expressions. The second one, Lemma \ref{lem:transition_matrix}, notes that for reasonably nice stochastic processes there is a uniform exponential decay to the stationary distribution. The third, Lemma \ref{lem:M0}, will be useful in analyzing the difference of solutions to Kolmogorov's  equations. The final one, Lemma \ref{lem:v}, gives a useful bound on solutions to a particular ODE. In particular, the third and fourth lemmas are used numerous times throughout the paper. After we establish the four initial lemmas, we introduce two additional lemmas that deal with the MFG system in particular. Finally, in this section we will list which of the five main assumptions are relevant for each result. In every result after this section, all five main assumptions are used and so they will not be listed.

\begin{lemma}\label{lem:b}
For any $b\in\R^d$,
\begin{align}\notag
    |\Delta b| = \sqrt{2d} |b-\langle b \rangle|.
\end{align}
\end{lemma}

\begin{proof}
\begin{align*}
    2d|b-\langle b\rangle|^2 &= 2d \sum_{x\in [d]} [b_x^2 + \langle b\rangle^2_x -2b_x \langle b\rangle_x] \\
    &= 2d \Big[\Big(\sum_{x\in [d]} b_x^2\Big) + \frac{1}{d} \Big( \sum_{y\in [d]} b_y\Big)^2 - \frac{2}{d} \Big( \sum_{y\in [d]} b_y\Big)^2 \Big] \\
    &= 2\Big[d \sum_{x\in [d]} b_x^2 - \Big( \sum_{x\in [d]} b_x\Big)^2\Big] \\
    &= \sum_{y\in [d]} \sum_{x\in [d]} (b_x - b_y)^2 \\
    &= |\Delta b|^2.
\end{align*}
\end{proof}

\begin{lemma}\label{lem:transition_matrix}
Assume ($A_1$) and fix $\delta>0$. Then there exist parameters $c=c(\delta), C=C(\delta)>0$ such that for any transition matrix $P$ of a discrete-time Markov chain on $[d]$ with inputs greater or equal to $\delta$, any initial state $x\in[d]$, and any $k\in\N$, one has
\begin{align}\notag
|P^k(x,\cdot)-\pi(\cdot)|\le  Ce^{-ck},
\end{align}
where $\pi$ is the stationary distribution of the irreducible recurrent chain $P$. As a corollary, for any two initial distributions $p_0$ and $\hat p_0$, and any $k\in\N$,
\begin{align}\notag
|(p_0-\hat p_0)^TP^k|\le 2 Ce^{-ck},
\end{align}
\end{lemma}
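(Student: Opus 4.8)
The plan is to establish the geometric ergodicity bound via the classical Doeblin-minorization argument, then deduce the corollary by a coupling/total-variation estimate. First I would observe that since every entry of $P$ is at least $\delta>0$ and rows sum to $1$, the chain is irreducible and aperiodic (in fact, every one-step transition probability is bounded below), so a unique stationary distribution $\pi$ exists; moreover the Doeblin condition holds with the single time step $m=1$: for every $x,y\in[d]$ we have $P(x,y)\ge\delta$, hence $P(x,\cdot)\ge\delta\,\nu(\cdot)$ where $\nu$ is the uniform distribution on $[d]$ (so $P(x,\cdot) \geq (d\delta)\,\nu(\cdot)$, and note $d\delta \le 1$). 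Write $\beta:=d\delta\in(0,1]$; if $\beta=1$ the chain reaches $\pi=\nu$ in one step and the bound is trivial, so assume $\beta<1$.

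Next I would invoke the standard consequence of the Doeblin minorization: the Dobrushin contraction coefficient of $P$ satisfies
\[
\sup_{x,x'\in[d]} \tfrac12\,|P(x,\cdot)-P(x',\cdot)|_1 \;\le\; 1-\beta,
\]
because subtracting the common minorizing mass $\beta\nu$ from both $P(x,\cdot)$ and $P(x',\cdot)$ leaves two subprobability measures of total mass $1-\beta$ whose difference has $\ell^1$-norm at most $2(1-\beta)$. Since total variation distance contracts under application of a stochastic matrix at the rate of the Dobrushin coefficient, iterating gives, for any two probability vectors $p_0,\hat p_0$,
\[
\big|(p_0-\hat p_0)^T P^k\big|_1 \;\le\; 2(1-\beta)^k \;=\; 2e^{-ck}, \qquad c:=-\log(1-\beta)>0,
\]
which is precisely the corollary (after replacing $|\cdot|_1$ by the equivalent $|\cdot|$ up to a constant absorbed into $C$). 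Taking $p_0=\delta_x$ and $\hat p_0=\pi$, and using $\pi^T P^k=\pi^T$, yields $|P^k(x,\cdot)-\pi(\cdot)|\le Ce^{-ck}$ with $C=C(\delta)$ and $c=c(\delta)$ depending only on $d$ and $\delta$, as claimed. The constant from the norm equivalence on $\R^d$ (between $|\cdot|$ and $|\cdot|_1$) is dimension-dependent but $d$ is fixed, so it is harmless.

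I do not anticipate a genuine obstacle here — this is a textbook application of Doeblin's theorem — but the one point that deserves care is confirming that the minorization works at a single time step with a constant depending only on $\delta$ (and $d$), so that the resulting rate $c(\delta)$ is uniform over \emph{all} transition matrices $P$ with entries $\ge\delta$, not just a fixed one. This uniformity is exactly what is needed downstream, and it follows because the bound $P(x,y)\ge\delta$ is assumed to hold for every entry simultaneously, making $\beta=d\delta$ a universal lower bound on the minorizing mass. An alternative, equally short route would be to bound $\min_y P^k(x,y)$ from below by a Markov-chain potential-theoretic argument and extract the decay rate from that; either way the heart of the matter is the one-step Doeblin condition, which the hypothesis hands us directly.
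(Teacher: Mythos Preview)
Your proposal is correct and follows essentially the same approach as the paper, which simply states that ``the proof follows since the Doeblin condition holds'' and refers to Doob. You have spelled out the standard Doeblin--Dobrushin contraction argument that the paper leaves as a citation, and your care about the uniformity of $c(\delta)$ over all admissible $P$ is exactly the point that matters for the downstream applications.
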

The proof follows since the Doeblin condition holds (see for instance \cite[p.192]{Doob}). 
Let $\calQ := \calQ[\A]$ be the set of $d\times d$ transition matrices with rates from $\A$. 
\begin{lemma}\label{lem:M0}
Assume ($A_1$) and fix measurable functions $h:\R_+\to \calQ$ and $B:\R_+\to \calM_0$ satisfying $\int_0^\iy |B(t)|^2dt<\iy$. Let $m:\R_+\to\calM_0$ be the solution to the ODE:
\begin{align}\label{eqn:measure_ode}
\frac{d}{dt}m_x(t)=\sum_{y\in[d]}m_y(t)h_{yx}(t)+B_x(t),\qquad t\in\R_+,
\end{align}
with $m(0)=m_0\in\calM_0$. Then, there exist constants $c,C>0$, depending only on $\mathfrak{a}_l$ and $\mathfrak{a}_u$, such that for any $t\in\R_+$,
\[
|m(t)| \leq Ce^{-ct}|m(0)| + Ce^{-ct}\int_0^t e^{cs} |B(s)|_1 ds\leq Ce^{-ct}|m(0)| + C\int_0^t |B(s)|_1 ds.
\]
\end{lemma}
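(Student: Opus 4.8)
The plan is to treat the linear ODE \eqref{eqn:measure_ode} via the variation-of-constants (Duhamel) formula and then exploit the uniform exponential decay of the homogeneous flow on $\calM_0$, which is a continuous-time analog of Lemma \ref{lem:transition_matrix}. First I would observe that for a fixed measurable $h:\R_+\to\calQ$, the fundamental solution $\Phi(t,s)$ of the homogeneous equation $\frac{d}{dt}m(t)^T = m(t)^T h(t)$ is (the transpose of) a family of stochastic transition kernels: indeed $h(t)$ is a $Q$-matrix with off-diagonal rates in $[\mathfrak a_l,\mathfrak a_u]$, so $\Phi(t,s)$ is the transition matrix of the (time-inhomogeneous) Markov chain with generator $h$. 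Consequently $\Phi(t,s)$ maps $\calM_0$ into $\calM_0$ (it preserves total mass), and by a Doeblin-type argument — using that over any unit time interval every entry of $\Phi(t+1,t)$ is bounded below by a positive constant depending only on $d,\mathfrak a_l,\mathfrak a_u$ — there are $c,C>0$ depending only on $\mathfrak a_l,\mathfrak a_u$ (and $d$, which is fixed) such that $|w^T\Phi(t,s)|\le Ce^{-c(t-s)}|w|$ for every $w\in\calM_0$ and every $t\ge s\ge 0$. One can either invoke Lemma \ref{lem:transition_matrix} after discretizing time into unit steps and bounding the intermediate evolution crudely, or quote the standard uniform ergodicity for generators satisfying a minorization condition.

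Next I would write the solution as
\[
m(t)^T = m(0)^T\Phi(t,0) + \int_0^t B(s)^T\Phi(t,s)\,ds,
\]
which is legitimate since $B(s)\in\calM_0$ for a.e. $s$ and $\int_0^\iy|B(s)|^2\,ds<\iy$ guarantees local integrability. Applying the decay estimate termwise gives
\[
|m(t)| \le Ce^{-ct}|m(0)| + C\int_0^t e^{-c(t-s)}|B(s)|_1\,ds
= Ce^{-ct}|m(0)| + Ce^{-ct}\int_0^t e^{cs}|B(s)|_1\,ds.
\]
The second, cruder bound in the statement follows immediately by dropping $e^{-c(t-s)}\le 1$ inside the integral (and noting the norm equivalence on $\R^d$ absorbs into $C$). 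I would also remark that $m(t)$ stays in $\calM_0$ for all $t$, since both terms on the right of the Duhamel formula do, which makes the statement that $m:\R_+\to\calM_0$ consistent.

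The main obstacle is establishing the uniform exponential contraction of the homogeneous flow on $\calM_0$ with constants depending only on $\mathfrak a_l,\mathfrak a_u$ (not on the particular $h$). The cleanest route is the minorization/Doeblin condition: for any $Q$-matrix with off-diagonal rates in $[\mathfrak a_l,\mathfrak a_u]$, the time-one transition matrix $\Phi(t+1,t)$ satisfies $\Phi(t+1,t)(x,y)\ge \varepsilon_0$ for all $x,y$, with $\varepsilon_0=\varepsilon_0(d,\mathfrak a_l,\mathfrak a_u)>0$ — this is because, uniformly over such generators, there is positive probability of reaching any state within time one and remaining there. Granting this, Lemma \ref{lem:transition_matrix} applied to the discrete skeleton $(\Phi(k+1,k))_{k\in\N}$ gives geometric decay at integer times, and bounding the evolution over the fractional remainder by the trivial estimate $|w^T\Phi(t,s)|\le |w|$ (total-variation contraction of stochastic kernels) upgrades this to all $t\ge 0$. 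Everything else is the routine variation-of-constants computation and the triangle inequality.
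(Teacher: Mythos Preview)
Your proposal is correct and follows the same high-level strategy as the paper: write the solution by Duhamel's formula, establish that the homogeneous flow contracts exponentially on $\calM_0$ via a Doeblin-type minorization, invoke Lemma~\ref{lem:transition_matrix} at integer times, and control the fractional remainder by the trivial stochastic bound. Two small differences are worth noting. First, you work with the genuine time-ordered fundamental solution $\Phi(t,s)$, whereas the paper writes $P(t)=\exp\big(\int_0^t h(s)\,ds\big)$ and uses $\frac{d}{dt}\tilde P(t)=-h(t)\tilde P(t)$; strictly speaking that identity requires $h(t)$ to commute with $\int_0^t h(s)\,ds$, so your formulation is the cleaner one. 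Second, the paper's specific device for invoking Lemma~\ref{lem:transition_matrix} is to rewrite $\exp\big(\int_0^k h\big)=\big[\exp\big(\tfrac{1}{k}\int_0^k h\big)\big]^k$ and observe that $\tfrac{1}{k}\int_0^k h\in\calQ$, so the base has all entries bounded below by a fixed $\delta=\delta(\mathfrak a_l,\mathfrak a_u)$ and the lemma (stated for powers of a \emph{single} $P$) applies verbatim. Your skeleton route with varying one-step kernels $\Phi(k+1,k)$ cannot cite Lemma~\ref{lem:transition_matrix} literally, since that lemma concerns $P^k$; you need the Dobrushin-contraction version for products of stochastic matrices with a common minorization, which you correctly flag as the alternative. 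Either route yields the same constants and the same final estimate.
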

\begin{proof}
First, note that \eqref{eqn:measure_ode} admits a unique solution. Moreover, by taking $\sum_{x\in [d]}$ on both sides and recalling that $B(t)\in\calM_0$ and that $h\in\calQ$ implies $\sum_{x\in [d]} h_{yx}(t) = 0$, we get $m(t)\in\calM_0$. We now turn to prove the bound. Define \[
P(t):= \exp\Big(\int_0^t h(s) ds
\Big) \quad \text{ and }\quad \tilde P(t) := \exp\Big(\int_0^t -h(s) ds
\Big).
\] Note that $\tilde P(t) P(t) = I$. Multiplying the ODE by the matrix $\tilde P(t)$ on the right, we get
\[
\frac{dm}{dt}(t) \tilde P(t) + m(t)(-h(t))\tilde P (t) = B(t) \tilde P(t).
\] Using that $\frac{d}{dt}\tilde P(t) = -h(t)\tilde P(t)$ and the product rule:
\[
\frac{d}{dt}\Big(m(t)\tilde P(t)\Big)= \frac{dm}{dt}(t) \tilde P(t) + m(t)\frac{d\tilde P}{dt}(t).
\] Integrating the ODE, we have 
\[
m(t)\tilde P(t) - m(0) = m(t)\tilde P(t) - m(0) \tilde P(0) = \int_0^t B(s) \tilde P(s) ds.
\] Multiplying the equation by $P(t)$, we obtain:
\[
m(t) - m(0)P(t) = \int_0^t B(s) \tilde P(s)P(t) ds = \int_0^t B(s) \exp\Big(\int_s^t h(v) dv\Big) ds.
\] Consider first the case $B\equiv 0$; so $m(t) = m(0)P(t)$. Set the vectors $m^+_0=(m_{0,i}\vee 0:i\in[d])$ and $ m^-_0=((-m_{0,i})\vee 0:i\in[d])$. Then, $m(0)=m^+_0-  m^-_0$. Note that $p^+_0:=m^+_0/|m^+_0|_1$ and $  p^-_0:=  m^-_0/|  m^-_0|_1$ belong to $\calP([d])$. In order to use Lemma \ref{lem:transition_matrix}, we need to make sure that the exponent of $\frac{1}{k}\int_0^k h(s) ds\in\calQ$ is bounded away from zero term by term (uniformly in $k$). We start with the observation that for a matrix $A$ with all entries bounded below by $\epsilon>0$, we have that $e^A = I+A+A^2/2+\cdots$ and so $e^A$ must have entries bounded below by $\epsilon>0$. Denote the matrix $H:=\frac{1}{k}\int_0^k h(s) ds\in\calQ$. Since $H\in\calQ$ and the rates are bounded, there exists $\theta >0$ such that $H+\theta I$ has entries bounded below by $\epsilon>0$. Then, \[
e^H = e^{(H+\theta I)-\theta I} = e^{H+\theta I} e^{-\theta I} = e^{H+\theta I} e^{-\theta }.
\] Since $e^{-\theta }>0$ and applying the previous observation to $H+\theta I$, we find that $e^H$ has entries bounded below by some $\delta>0$. From here on, $c$ and $C$ are positive constants which depend only on $\mathfrak{a}_l,\mathfrak{a}_u,$ and $d$, but may change from line to line. 

Note that $|m(0)|=|m^+_0-m^-_0|=|m^+_0|_1||p^+_0-p^-_0|$, where the last equality follows since $m_0\in\calM_0$ implies  $|m^+_0|=|m^-_0|$. 
So, Lemma \ref{lem:transition_matrix} together with the observation that for any $k\in\N$, $\frac{1}{k}\int_0^k h(s) ds\in\calQ$, yields,
\[
|m(k)|\leq |m^+_0|_1\Big|(p^+_0-p^-_0)\exp\Big(\Big[\frac{1}{k}\int_0^k h(s) ds\Big]k\Big)\Big| \leq Ce^{-ck}|m^+_0|_1.
\] To obtain such a bound for general $t\in\R_+$, note that
\begin{align*}
|m(t)| &\leq |m^+_0|_1\Big|(p^+_0-p^-_0)\exp\Big(\Big[\frac{1}{\lfloor t\rfloor}\int_0^{\lfloor t\rfloor} h(s) ds\Big] \lfloor t\rfloor \Big)\Big| \exp\Big(\int_{\lfloor t\rfloor}^{t} |h(s)| ds \Big) \\
&\leq Ce^{-c\lfloor t\rfloor}|m^+_0|_1 \\
&\le Ce^{-ct}|m^+_0|_1.
\end{align*}
Now, $|m^+_0|_1\leq |m(0)|_1$ and use the equivalence of $L_1$ and $L_2$ norms in $\R^d$ to conclude the case $B\equiv 0$. For general $B$, define 
$$
\beta(s,t):= \sup_{\substack{|v|_1 = 1\\ v\cdot \vec{1}=0}}|v^T \tilde P(s)P(t)|_1 = \frac{1}{2}\max_{x,y\in [d]} |(e_x-e_y)^T \tilde P(s)P(t)|_1.
$$ For any $x,y\in [d]$, $e_x,e_y\in\calP([d])$; so, Lemma \ref{lem:transition_matrix} and a similar computation as with $m$ above implies $\beta(s,t) \leq Ce^{-c(t-s)}$. Since $B(s) \in\calM_0$, we find that $B(s)\cdot \vec{1} = 0$. Therefore,
\begin{align*}
\int_0^t |B(s)^T \tilde P(s)P(t)|_1 ds &=\int_0^t |B(s)|_1 \Big|\frac{B(s)^T }{|B(s) |_1}\tilde P(s)P(t)\Big|_1 ds \\
&\le C\int_0^t |B(s)|_1 \beta(s,t) ds \\
&\le C \int_0^t |B(s)|_1 e^{-c(t-s)} ds.
\end{align*} 
\end{proof}

\begin{lemma}\label{lem:v} 
Assume ($A_1$) and fix $T>0$ and measurable functions $h:[0,T]\to \calQ$ and $A :[0,T]\to \R^d$ satisfying $\int_0^T |A(t)|^2dt<\iy$. Let $v:[0,T]\to\R_+^d$ satisfy
\begin{align}\notag
-\frac{d}{dt}v_x(t) = h_{x}(t)\cdot\Delta_x v(t)+A_x(t),\qquad t\in[0,T],
\end{align}
with a given terminal condition $v(T)=v_T\in\R^d$; we use the notation $h_x:=(h_{xy}:y\in[d])$, $x\in[d]$. Then, there exist constants $c,C>0$, depending only on $\mathfrak{a}_l$ and $\mathfrak{a}_u$, such that for any $t\in[0,T]$,
\begin{align}\label{general_m}
|\Delta v(t)|\le Ce^{-c(T-t)}|\Delta v_T|+C\int_t^Te^{-c(s-t)}|A(s)|ds.
\end{align} As a consequence, if we have $w:[0,T]\to\R_+$ which satisfies
\begin{align}\notag
-\frac{d}{dt}w_x(t)= -rw_x(t) + h_{x}(t)\cdot\Delta_x w(t)+A_x(t),\qquad (t,x)\in[0,T]\times[d],
\end{align} then there exist $c,C>0$ depending only on $\mathfrak{a}_l$ and $\mathfrak{a}_u$,  such that for any $t\in [0,T]$, 
\[
e^{-rt}|\Delta w(t)|\le Ce^{-rt}e^{-c(T-t)}|\Delta w_T|+C\int_t^T e^{-c(s-t)}e^{-rs}|A(s)|ds.
\]
\end{lemma}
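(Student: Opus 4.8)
The plan is to exploit the duality between the backward equation for $v$ and the forward measure equation of Lemma~\ref{lem:M0}. First I would rewrite the equation for $v$: since $h(t)\in\calQ$ has zero row sums, $h_x(t)\cdot\Delta_x v(t)=\sum_{y\in[d]}h_{xy}(t)v_y(t)=(h(t)v(t))_x$, so $v$ solves the linear ODE $-\frac{d}{dt}v(t)=h(t)v(t)+A(t)$ on $[0,T]$. It then suffices to bound $v_y(t)-v_x(t)=(e_y-e_x)\cdot v(t)$ for each fixed $t\in[0,T]$ and $x,y\in[d]$, because $|\Delta v(t)|\le d\max_{x,y}|v_y(t)-v_x(t)|$.

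Fixing such $t,x,y$, I would introduce $m:[t,T]\to\calM_0$ solving the homogeneous measure equation $\frac{d}{ds}m(s)=m(s)h(s)$ with $m(t)=e_y-e_x$; this flow preserves $\calM_0$ since $h\in\calQ$. Extending $h$ past $T$ within $\calQ$ and applying Lemma~\ref{lem:M0} with $B\equiv 0$ (after shifting time by $t$) gives $|m(s)|\le Ce^{-c(s-t)}$ on $[t,T]$, with $c,C$ depending only on $\mathfrak{a}_l,\mathfrak{a}_u$. Differentiating the pairing and using both ODEs, $\frac{d}{ds}\big(m(s)\cdot v(s)\big)=\big(m(s)h(s)\big)\cdot v(s)-m(s)\cdot\big(h(s)v(s)+A(s)\big)=-m(s)\cdot A(s)$, so integration over $[t,T]$ yields
\[
v_y(t)-v_x(t)=m(T)\cdot v(T)+\int_t^T m(s)\cdot A(s)\,ds.
\]
Since $m(T)\in\calM_0$ is orthogonal to $\vec 1$, $m(T)\cdot v(T)=m(T)\cdot(v_T-\langle v_T\rangle)$, which by Cauchy--Schwarz, Lemma~\ref{lem:b}, and the decay of $|m|$ is at most $Ce^{-c(T-t)}|\Delta v_T|$; the integral term is at most $C\int_t^T e^{-c(s-t)}|A(s)|\,ds$. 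Taking the maximum over $x,y$ and absorbing $d$ into $C$ gives \eqref{general_m}.

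For the discounted consequence, I would set $v(t):=e^{-rt}w(t)$; using linearity of $\Delta_x(\cdot)$ this solves $-\frac{d}{dt}v(t)=h(t)v(t)+e^{-rt}A(t)$ with terminal value $v(T)=e^{-rT}w_T$, so \eqref{general_m} applied to $v$ with data $e^{-rs}A(s)$, combined with $|\Delta v(t)|=e^{-rt}|\Delta w(t)|$ and $e^{-rT}\le e^{-rt}$ for $t\le T$, gives the stated bound. I do not expect a genuine obstacle in any step; the only point requiring care is converting the pairing $m(T)\cdot v(T)$ into a quantity controlled by $|\Delta v_T|$ rather than $|v_T|$ (which is exactly where orthogonality to $\vec 1$ and Lemma~\ref{lem:b} enter) and making sure the constants inherited from Lemma~\ref{lem:M0} depend only on $\mathfrak{a}_l,\mathfrak{a}_u$.
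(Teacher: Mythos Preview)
Your proposal is correct and follows essentially the same approach as the paper: pair $v$ with a solution $m$ of the homogeneous forward equation, use $\frac{d}{ds}(m\cdot v)=-m\cdot A$, invoke the exponential decay of $m$ from Lemma~\ref{lem:M0}, and exploit $m\in\calM_0$ to replace $v_T$ by $v_T-\langle v_T\rangle$ via Lemma~\ref{lem:b}; the discounted corollary is obtained in both cases by the substitution $v=e^{-rt}w$. The only cosmetic difference is that the paper takes $m(t_0)=\eta-\langle\eta\rangle$ for arbitrary $\eta\in\R^d$ and then reads off $|\bar v(t_0)|$ by duality, whereas you take $m(t)=e_y-e_x$ for each pair and then maximize over $x,y$.
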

\begin{proof}
By Lemma \ref{lem:b}, it suffices to establish the lemma with $|v(t) - \langle v(t)\rangle|$ instead of $|\Delta v(t)|$ and we proceed in this way. Fix arbitrary $t_0\in[0,T]$ and $\eta\in\R^d$ and let $m:[t_0,T]\to\calM_0$ be the solution to the ODE
\begin{align}\label{eq:m_no_B}
\frac{d}{dt}m_x(t)=\sum_{y\in[d]}m_y(t)h_{yx}(t),\qquad t\in[t_0,T],
\end{align}
with $m(t_0)=\eta-\langle \eta\rangle $. Then, $(d/dt)(v(t)\cdot m(t))= -A(t)\cdot m(t)$, which implies
\begin{align}\notag
v(T)\cdot m(T)-v(t_0)\cdot m(t_0)= -\int_{t_0}^TA(s)\cdot m(s)ds.
\end{align}
For any $v\in \R^d$, denote $\bar v:=v-\langle v\rangle$. Note that for any $a,b\in\R^d$, $a\cdot\langle b\rangle=b\cdot \langle a\rangle$, so we get 
$$v(t_0)\cdot (\eta-\langle \eta\rangle)=\bar v(t_0)\cdot \eta.$$ Moreover, since $m(T)\in\calM_0$, $v(T)\cdot m(T)=\bar v(T)\cdot m(T)$. As a consequence,
\begin{align}\label{eq:bar_v}
\bar v(t_0)\cdot \eta= \bar v(T)\cdot m(T)+\int_{t_0}^TA(s)\cdot m(s)ds.
\end{align}
From Lemma \ref{lem:M0} applied on the interval $[t_0,T]$ to \eqref{eq:m_no_B}, we have 
$$|m(t)|\le C|m(t_0)| e^{-c(t-t_0)}= C|\eta-\langle\eta\rangle| e^{-c(t-t_0)}\le 2C|\eta| e^{-c(t-t_0)},$$ 
for some $c,C>0$, depending only on $\mathfrak{a}_l$ and $\mathfrak{a}_u$. Plugging in the last bound in \eqref{eq:bar_v}, we get
\begin{align}\notag
|\bar v(t_0)\cdot \eta|
&\le 
|m(T)||\bar v(T)|+\int_{t_0}^T|A(s)| |m(s)|ds\\\notag
&\le 
C|\eta|\Big(e^{-c(T-t_0)}|\bar v(T)|+\int_{t_0}^Te^{-c(s-t_0)}|A(s)| ds\Big).
\end{align}
Since $\eta\in\R^d$ is arbitrary, the result \eqref{general_m} follows. 

For $w$ which satisfies
\[
-\frac{d}{dt}w_x(t)=  -rw_x(t) + h_{x}(t)\cdot\Delta_x w(t)+A_x(t),\qquad (t,x)\in[0,T]\times[d],
\] we note that $v(t) = e^{-rt}w(t)$ satisfies 
\[
-\frac{d}{dt}v_x(t)= h_{x}(t)\cdot\Delta_x v(t)+e^{-rt}A_x(t),\qquad (t,x)\in[0,T]\times[d].
\] So we can apply the previous result \eqref{general_m} to obtain the corollary for $w$.
\end{proof}

The proofs of Propositions \ref{prop:statDisc_erg_mfg} and \ref{prop:MFG} rely on two more auxiliary lemmas that are specific to the MFG system. The first is a preliminary bound on the solutions to \eqref{MFG^r} and the second is a value-measure duality lemma, which provides a useful way to bound integrals of the solutions. Throughout the proof, $\mu_0\in\calP([d])$ is fixed. 

The finite-horizon discounted MFG system we will use in our approach of Proposition \ref{prop:MFG} is:
\begin{equation*}
\begin{cases}
-\frac{d}{dt}u^{T}_x(t) = -ru^{T}_x(t) + H(x, \Delta_x u^{T}(t)) + F(x,\mu^T(t)),\\
\frac{d}{dt}\mu^T_x(t) = \sum_{y\in [d]} \mu^T_y(t)\gamma^*_x(y,\Delta_yu^T(t)),\\
\mu^T_x(0) = \mu_{0,x}, \quad u^T(T) = 0,\quad t\in [0,T].
\end{cases}
\tag{MFG$_r^T$}\label{MFG^r_T}
\end{equation*} 
Under the regularity assumptions on $H,F,$ and $\gamma^*$, we have by the same techniques as in \cite{bay-coh2019} that there exists a unique solution to \eqref{MFG^r_T}.

Recall that the Hamiltonian $H$ is assumed to be locally Lipschitz. The following lemma shows that solutions to the above ODEs are bounded, uniformly in $T$, and so $H$ only ever takes inputs from a compact set. We use this bound in several places throughout the paper.

\begin{lemma}[Solution Bound]\label{lem:sol_bound}
Assume ($A_2$), ($A_3$) and fix an arbitrary $\theta:\R_+\to\calP([d])$ satisfying $\theta(0)=\mu_0$. Assume that $u$ and $u^T$ solve the following ODEs:
\begin{align}\label{ODE^r_1}
-\frac{d}{dt}u_x(t) &= -ru_x(t) + H(x, \Delta_x u(t)) + F(x,\theta(t)),\qquad t\in\R_+, \\
-\frac{d}{dt}u_x^T(t) &= -ru_x^T(t) + H(x, \Delta_x u^T(t)) + F(x,\theta(t)), \quad t\in [0,T].\label{ODE^T_1}
\end{align}
Then, 
\begin{align}\label{bound_u}
&\max_{(t,y)\in [0,T]\times [d]} |u^T_y(t)|\le \frac{C_{f+F}}{r} \quad\text{ and }\quad \sup_{(t,y)\in\R_+\times[d]}|u_y(t)| \le \frac{C_{f+F}}{r}.
\end{align} 
\end{lemma}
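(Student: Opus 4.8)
The plan is to exploit that the Hamiltonian $H$, being an infimum of affine functions in $p$ with the rate vectors $a$ ranging over a compact set (by ($A_1$)), is itself bounded on any bounded input — but of course the difficulty is precisely that the input $\Delta_x u(t)$ is not yet known to be bounded. To break this circularity I would bound $H$ not by the compactness of its $p$-argument, but by a direct sign/comparison argument on the ODE. Concretely, from $H(x,p) = \inf_a \{f(x,a) + a\cdot p\}$ one has, testing against any fixed admissible rate vector (e.g.\ $a\equiv 0$ in the sense that $a_y = \mathfrak a_l$ for $y\ne x$, which is admissible), the two-sided estimate $-C(1+|p|) \le H(x,p) \le C(1+|p|)$; more usefully, for the \emph{lower} bound we can write $H(x,p) \ge f(x,\gamma^*(x,p)) + \gamma^*(x,p)\cdot p \ge -C_f - C|p|$, but this still involves $|p|$.

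So instead I would argue via the probabilistic representation. By ($A_2$) the solution $u^T$ to \eqref{ODE^T_1} coincides (by a standard verification argument for finite-state control problems) with the value function
\[
u^T_x(t) = \inf_{\alpha\in\calA}\E\Big[\int_t^T e^{-r(s-t)}\big\{f(X_s,\alpha(s,X_s)) + F(X_s,\theta(s))\big\}\,ds \,\Big|\, X_t = x\Big],
\]
and similarly $u_x(t) = \inf_{\alpha\in\calA} J_r(t,x,\alpha,\theta)$ in the infinite-horizon case with terminal cost $0$. Since $f(x,a)$ is continuous in $a$ on the compact action set $\A^{d}$ and $F(x,\cdot)$ is bounded on $\calP([d])$ by ($A_3$), the integrand is bounded in absolute value by some constant $C_{f+F}$ (this constant absorbs $\sup|f| + \sup|F|$ over the relevant compact sets). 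Plugging in any fixed control for the upper bound and using $|f + F|\le C_{f+F}$ for the lower bound gives
\[
|u^T_x(t)| \le \int_t^T e^{-r(s-t)} C_{f+F}\,ds \le \frac{C_{f+F}}{r},\qquad |u_x(t)| \le \int_t^\iy e^{-r(s-t)} C_{f+F}\,ds = \frac{C_{f+F}}{r},
\]
uniformly in $t$, $x$, $T$, which is exactly \eqref{bound_u}.

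Alternatively — and this avoids invoking the verification theorem — one can run a purely ODE-theoretic comparison: set $\phi(t) := e^{-rt}\max_x |u^T_x(t)|$ (or work with the Perron/upper and lower envelopes $\max_x u^T_x$ and $\min_x u^T_x$ separately, since these are Lipschitz and satisfy differential inequalities a.e.). Using that $H(x,\Delta_x u^T(t))$ evaluated at the index $x$ realizing the max satisfies $\Delta_x u^T(t) \le 0$ componentwise there, and that $H(x,p)\le f(x,a) + a\cdot p$ for the admissible choice $a=\gamma^*$ with the sign of $a\cdot \Delta_x u^T$ controlled, one obtains $\frac{d}{dt}\big(\max_x u^T_x(t)\big) \ge r \max_x u^T_x(t) - C_{f+F}$ in the viscosity/a.e.\ sense, and symmetrically for the min; Gr\"onwall (backward from $t=T$ where $u^T(T)=0$) then yields $|u^T_x(t)|\le \frac{C_{f+F}}{r}(1 - e^{-r(T-t)}) \le \frac{C_{f+F}}{r}$, and letting $T\to\iy$ (or arguing directly) handles $u$. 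The main obstacle is simply making rigorous the handling of $H(x,\Delta_x u^T)$ at the extremal index — i.e.\ checking that at an interior maximum of $s\mapsto\max_x u^T_x(s)$ the term $H$ has a sign that works in our favor, which is where the structure $H(x,p)=\inf_a\{f(x,a)+a\cdot p\}$ with $a_x = -\sum_{y\ne x} a_y$ and $a_y\ge 0$ for $y\ne x$ (so that $a\cdot \Delta_x p = \sum_{y\ne x} a_y (p_y - p_x) \le 0$ when $p_y \le p_x$ for all $y$) is used. I would present the probabilistic representation version as the clean main line, since $u^T$ and $u$ are literally value functions and the bound is then immediate.
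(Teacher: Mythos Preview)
Your main line (the probabilistic representation) is essentially the paper's approach. The only difference is framing: rather than invoking a verification theorem to identify $u$ with the value function $\inf_\alpha \E[\cdots]$, the paper simply applies It\^o's lemma to $e^{-rs}u_{X_s}(s)$ with the \emph{specific} control $\alpha_x(s)=\gamma^*(x,\Delta_x u(s))$, notes that because $\gamma^*$ is the minimizer one has exactly $-\tfrac{du_x}{dt}+ru_x-\alpha_x\cdot\Delta_x u = f(x,\alpha_x)+F(x,\theta)$, and reads off the representation $e^{-rt}u_y(t)=\E\int_t^\infty e^{-rs}(f+F)\,ds$ directly; the bound $C_{f+F}/r$ then follows. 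This sidesteps the need to check that $u$ is the value function (which would require the full verification argument you allude to), so it is marginally cleaner. Your alternative ODE/comparison route via the max and min envelopes is a legitimate different proof that the paper does not use.
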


\begin{proof}
We begin with the argument for $u(t)$. Fix $t\in\R_+$ and let $(X_s)_{s\ge t}$ be a jump process on $[d]$, starting at $X_t=y$, and with jump rates $\al_{xz}(s):=\gamma^*_z(x,\Delta_{x}u(s))$. 
Applying It\^o's lemma to $e^{-rt}u_{X_t}(t)$ and taking expectations on both sides, one gets,
\begin{align}\notag
e^{-rt}u_y(t) = \E\Big[\int_t^\infty e^{-rs}\sum_{x\in[d]}\one_{\{X_{s}=x\}}\Big( - \frac{du_x}{dt}(s) +ru_x(s)-  \alpha_x(s)\cdot\Delta_xu(s) \Big)ds\Big],
\end{align} 
where $\alpha_x:=(\alpha_{xy}:y\in[d])$. Since $u$ is a solution to \eqref{ODE^r_1} and $\gamma^*$ minimizes $H$, it follows that:
\[
-\frac{du_x}{dt}(s)+ru_x(s)-\alpha_x(s) \cdot \Delta_x u(s)= f(x,\alpha_x(s)) + F(x,\theta(s)),\qquad (s,x)\in[t,\infty)\times[d].
\] 
As a result,
\begin{align*}
e^{-rt}u_y(t) = \E\Big[\int_t^\infty e^{-rs}\sum_{x\in[d]}\one_{\{X_{s}=x\}}\left( f(x,\alpha_x(s))+F(x,\theta(s)) \right)ds\Big]. \numberthis{\label{eqn:u_true_form}}
\end{align*} Using the bound $C_{f+F}$ of $f+F$, we obtain \eqref{bound_u} for $u$. 

To establish $\eqref{bound_u}$ for $u^T$, we follow the same steps as with $u$ except on the interval $[0,T]$. Since $u^T(T)=0$, the only difference is that $\infty$ in the integral is replaced with $T$. 
\end{proof}

Eventually we prove that the solutions of the finite horizon system  converge to the unique solution to \eqref{MFG^r} as the time horizon $T\to\iy$; to show this, we require the following. 

\begin{lemma}[Value and measure duality]\label{lem:val-meas-duality}
    Assume ($A_2$), ($A_4$), and ($A_5$). Let $(u^r,\mu^r)$ and $(\tilde u^r, \tilde \mu^r)$ be solutions to \eqref{MFG^r} or $\eqref{MFG^r_T}$ where the initial data $\mu_0^r$ and $\tilde \mu_0^r$ is allowed to differ. Fix $0\leq t_1<t_2\leq T$. In the case of solutions to \eqref{MFG^r}, we allow $T=\iy$. Then:
    \begin{align*}
 C_{2,H} \int_{t_1}^{t_2} e^{-rt}\sum_{x\in [d]}  |\Delta_x (u^r -\tilde u^r)(t)|^2 (\mu_x^r(t)+\tilde\mu_x^r(t))dt &\leq  -e^{-rt}(\mu^r(t)-\tilde\mu^r(t))\cdot (u^r(t)-\tilde u^r(t))\Big|_{t_1}^{t_2}.
\end{align*} 
\end{lemma}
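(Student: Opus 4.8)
The plan is to differentiate in time the quantity $\Phi(t) := e^{-rt}(\mu^r(t)-\tilde\mu^r(t))\cdot(u^r(t)-\tilde u^r(t))$ and integrate from $t_1$ to $t_2$, showing that the integrand is bounded above by $-C_{2,H}e^{-rt}\sum_{x}|\Delta_x(u^r-\tilde u^r)(t)|^2(\mu^r_x(t)+\tilde\mu^r_x(t))$. Write $m := \mu^r-\tilde\mu^r$ and $v := u^r-\tilde u^r$. First I would compute $\frac{d}{dt}\Phi(t) = -r e^{-rt} m(t)\cdot v(t) + e^{-rt}\dot m(t)\cdot v(t) + e^{-rt} m(t)\cdot \dot v(t)$. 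Using the Kolmogorov equations for $\mu^r,\tilde\mu^r$ and the HJB equations for $u^r,\tilde u^r$ from \eqref{MFG^r} (or \eqref{MFG^r_T}), I substitute for $\dot m$ and $\dot v$. The $-ru^r_x$ and $-r\tilde u^r_x$ terms in the HJB equations will combine with the explicit $-re^{-rt}m\cdot v$ term; one must check these cancel, which is exactly the point of the $e^{-rt}$ weight.

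The heart of the matter is the algebraic identity that emerges after substitution. The $\dot m\cdot v$ term produces $\sum_{x,y}(\mu^r_y\gamma^*_x(y,\Delta_y u^r) - \tilde\mu^r_y\gamma^*_x(y,\Delta_y\tilde u^r))v_x$; the $m\cdot\dot v$ term produces (after the $r$-cancellation) $-\sum_x m_x\big(H(x,\Delta_x u^r) - H(x,\Delta_x\tilde u^r)\big) - \sum_x m_x\big(F(x,\mu^r)-F(x,\tilde\mu^r)\big)$. The $F$-difference term is $\leq 0$ by the Lasry--Lions monotonicity assumption ($A_4$), so it can be dropped from the upper bound. For the remaining terms, I would reorganize the $\gamma^*$-sum by noting $\sum_x \gamma^*_x(y,p)v_x = \gamma^*(y,p)\cdot\Delta_y v$ whenever one accounts for $\gamma^*_y(y,p) = -\sum_{x\neq y}\gamma^*_x(y,p)$, i.e. using that $v_x$ enters only through differences. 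Then the combination $\mu^r_y\,\gamma^*(y,\Delta_y u^r)\cdot\Delta_y v - \mu^r_y\big(H(y,\Delta_y u^r)-H(y,\Delta_y\tilde u^r)\big)$ and its $\tilde{}$-counterpart should be packaged using $\gamma^* = D_pH$ from \eqref{gamma_H}: by $\calC^2$-concavity of $p\mapsto H(y,p)$ with $D^2_{pp}H \leq -C_{2,H}$ (assumption ($A_5$)), one has the two-sided estimate $H(y,q) - H(y,p) - D_pH(y,p)\cdot(q-p) \leq -\tfrac{C_{2,H}}{2}|q-p|^2$. Applying this with $p = \Delta_y u^r, q = \Delta_y\tilde u^r$ for the $\mu^r$ terms and with $p = \Delta_y\tilde u^r, q = \Delta_y u^r$ for the $\tilde\mu^r$ terms, and adding, yields exactly $-\tfrac{C_{2,H}}{2}(\mu^r_y + \tilde\mu^r_y)|\Delta_y v|^2$ on the right — summing over $y$ gives the claimed integrand bound (possibly a harmless factor, which I would track to match the stated constant).

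The main obstacle I anticipate is bookkeeping: correctly matching the index structure when converting $\sum_x \gamma^*_x(y,\cdot)v_x$ into a dot product with $\Delta_y v$, keeping the sign conventions straight for the HJB equations (they are written as $-\dot u = \cdots$), and verifying the $r$-terms cancel cleanly against the weight. I would organize this by first doing the computation for $T=\infty$ with the infinite-horizon system \eqref{MFG^r}, where $\Phi(t)\to 0$ as $t\to\infty$ is available from Proposition \ref{prop:MFG}(iii) if one wanted the $t_2=\infty$ case, and then observing the finite-horizon case \eqref{MFG^r_T} is identical on $[t_1,t_2]$ since the computation is purely local in time. Finally, I would integrate the differential inequality $\frac{d}{dt}\Phi(t) \leq -C_{2,H}e^{-rt}\sum_x|\Delta_x v(t)|^2(\mu^r_x(t)+\tilde\mu^r_x(t))$ (after discarding the nonpositive $F$-term) over $[t_1,t_2]$ to obtain the stated inequality.
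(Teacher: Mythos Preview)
Your proposal is correct and follows essentially the same route as the paper: differentiate $e^{-rt}m(t)\cdot v(t)$, substitute the two MFG systems, drop the $F$-difference via Lasry--Lions monotonicity, convert $\sum_x\gamma^*_x(y,\cdot)v_x$ to $\gamma^*(y,\cdot)\cdot\Delta_y v$ using the zero-row-sum property, and apply the strong concavity of $H$ with $\gamma^*=D_pH$. Two minor remarks: first, your caveat about a factor of $2$ is well-taken---the Taylor bound naturally gives $C_{2,H}/2$, and the paper's displayed constant appears to absorb this; second, your aside about invoking Proposition~\ref{prop:MFG}(iii) for the $t_2=\infty$ endpoint would be circular here (this lemma precedes and is used in the proof of that proposition), but as you correctly note, the computation is purely local and no such appeal is needed for the inequality as stated.
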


\begin{proof}
For $v:=u-\tilde u$ and $m:=\mu-\tilde\mu$, an application of the chain rule implies that
\begin{align*}
    &e^{-rt_2}\sum_{x\in [d]} m_x(t_2) v_x(t_2)-e^{-rt_1}\sum_{x\in [d]} m_x(t_1) v_x(t_1) \\
    &\quad= \int_{t_1}^{t_2} \frac{d}{dt}\Big[ e^{-rt} \sum_{x\in [d]} m_x(t) v_x(t)\Big] dt\\
    &\quad= \int_{t_1}^{t_2} e^{-rt}\sum_{x\in [d]}\Big[ -r m_x(t) v_x(t) + \frac{dm_x}{dt}(t)v_x(t) + \frac{dv_x}{dt}(t)m_x(t)\Big]dt\\
    &\quad= \int_{t_1}^{t_2} e^{-rt}\sum_{x\in [d]}\Big[(H(x,\Delta_x\tilde u(t))- H(x,\Delta_x u(t)) + F(x,\tilde \mu(t))-F(x,\mu(t)))m_x(t) \\
    &\qquad\qquad\qquad\qquad+ v_x(t)\sum_{y\in [d]} \phi_{yx}(t) \Big]dt,
\end{align*} where $\phi_{yx}(t) =  \mu_y(t)\gamma^*_x(y,\Delta_yu(t)) -\tilde \mu_y(t)\gamma^*_x(y,\Delta_y\tilde u(t)).$ 
 Rearranging this expression, using the Lasry--Lions monotonicity condition for $F$, and the fact that $$\sum_x\gamma^*_x(y,\Delta_y u(t))=\sum_x\gamma^*_x(y,\Delta_y \tilde u(t))=0,$$ we get,
\begin{align}\label{ineq:m0}
\begin{split}
    0 &\leq \sum_{x\in[d]}\int_{t_1}^{t_2} e^{-rt} (F(x,\mu(t))-F(x,\tilde\mu (t)))m_x(t)dt \\
    &= \sum_{x\in[d]}\int_{t_1}^{t_2} e^{-rt} \Big[ v_x(t) \sum_{y\in [d]} \phi_{yx}(t) + (H(x,\Delta_x\tilde u(t))- H(x,\Delta_x u(t)))m_x(t)\Big]dt\\
    &\qquad-e^{-rt}(\mu^r(t)-\tilde\mu^r(t))\cdot (u^r(t)-\tilde u^r(t))\Big|_{t_1}^{t_2}\\
    &= \int_{t_1}^{t_2} e^{-rt} 
    \sum_{x\in[d]}
    [ H(x,\Delta_x \tilde u(t))- H(x,\Delta_x u(t))+\Delta_x v\cdot\gamma^*(x,\Delta_x u(t))]\mu_x(t)dt\\
    &\quad
    +\int_{t_1}^{t_2} e^{-rt} 
    \sum_{x\in[d]}
    [ H(x,\Delta_x u(t))- H(x,\Delta_x\tilde u(t))-\Delta_x v\cdot\gamma^*(x,\Delta_x \tilde u(t))]\tilde\mu_x(t)dt\\
    &\qquad-e^{-rt}(\mu^r(t)-\tilde\mu^r(t))\cdot (u^r(t)-\tilde u^r(t))\Big|_{t_1}^{t_2}.
\end{split}
\end{align}
Since $H$ is differentiable and $\gamma^*$ is Lipschitz, \eqref{gamma_H} says $\gamma^*(x,p) = D_p H(x,p)$. 
Moreover, since we assumed $D^2_{pp}H(x,p)$ is bounded away from $0$, we obtain the inequalities:
\begin{align}\notag
&H(x,\Delta_x \tilde u(t))- H(x,\Delta_x u(t))+\Delta_x v\cdot\gamma^*(x,\Delta_x u(t))
 \leq -C_{2,H} |\Delta_x v|^2,
\\\notag
&H(x,\Delta_x  u(t))- H(x,\Delta_x \tilde u(t))-\Delta_x v\cdot\gamma^*(x,\Delta_x\tilde  u(t))
 \leq -C_{2,H} |\Delta_x v|^2.
\end{align} 
Combining these two observations and then moving the two integrals in \eqref{ineq:m0} to the left-hand side yields the desired inequality.   
\end{proof}

We end this section with a comment about how the assumptions ($A_1$)--($A_5$) and the above lemmas allow us to obtain the  exponential bounds in Proposition \ref{prop:MFG}. We also draw parallels between the finite state MFG given here and the MFG with diffusion dynamics on the torus, see e.g., \cite{CardaliaguetDelarueLasryLions,car-por}.

\begin{remark}
\begin{itemize}
    \item Parallels between our setup and the diffusion case on the torus: We work with a Markov chain on a finite state space. This setup shares some features to the MFG in the diffusion case on the torus, which is compact. Had we allowed for a discrete but infinite state space, we would need to assume further conditions that lead to stability, positive recurrence, and proper growth conditions on the cost terms. As for the control component, assumption ($A_1$) asserts that the rates are in a compact set that is bounded away from zero. In a way, this condition is equivalent to assuming that the diffusion coefficient gets values in a compact set that is bounded away from zero. 

\item Exponential bounds: Assumption ($A_1$) implies the geometric ergodicity in Lemma \ref{lem:transition_matrix}. Lemma \ref{lem:M0} builds on this geometric ergodicity and improves it. To see this, consider two flows of measures $\mu$ and $\tilde\mu$, associated with the time-dependent rate matrices  $Q(t)$ and $\tilde Q(t)$ and the initial conditions $\mu_0$ and $\tilde\mu_0$, respectively. Then, $m:=\mu-\tilde\mu$ satisfies \eqref{eqn:measure_ode} with $h=Q$ and $B(t)=\tilde\mu^T(t)(Q-\tilde Q)(t)$. One case that shows up in several setting in the sequel is when the rate matrices $Q$ and $\tilde Q$ are functions of some value functions $u$ and $\tilde u$, respectively, that are coupled with $\mu$ and $\tilde\mu$ through MFG systems, see e.g., Sections \ref{sec:41} and \ref{sec:proof_prop_MFG}. Then, we get an exponential bound for the difference between $\mu$ and $\tilde\mu$ plus an error that emerges from the difference between the value functions. Lemma \ref{lem:v} provides a complimentary bound for the difference between the values by considering $v=u-\tilde u$ and by pulling off similar tricks to the ones performed in Lemma \ref{lem:val-meas-duality}, which in turn relies on the monotonicity of $F$ ($A_4$) and the concavity of $H$ ($A_5$), together with the fact that the gradient of the Hamiltonian is the optimal selector rate matrix \eqref{gamma_H}. These ideas go beyond analyzing  differences between solutions to MFG systems and are useful in the analysis of linearized systems of the MFG systems, see e.g., Sections  
\ref{sec:prop223} and \ref{sec:proof_MEr}. For these systems we need also the regularity of $F$ ($A_3$).
\end{itemize}
\end{remark}

\section{Proof of Proposition \ref{prop:statDisc_erg_mfg}}
\label{sec:stationary}

In this section, we provide a relationship between two stationary MFG systems: the discounted MFG system \eqref{stat_MFG_r} in which the initial distribution of the players is the stationary distribution, and the ergodic MFG system \eqref{erg_MFG}. We then provide existence and uniqueness results for \eqref{erg_MFG}. Finally, we prove the relationship between the stationary ergodic MFG solution and stationary ergodic MFE.

\subsection{Proof of (i)}\label{sec:41}
We start the proof with the existence of a solution to \eqref{stat_MFG_r}. Then, we show that for any such solution $\bar\mu^r$ is bounded away from 0, uniformly in $r$. The latter property is utilized to establish uniqueness.

{\bf Existence of a solution to \eqref{stat_MFG_r}:} For any fixed $\mu\in \calP([d])$ we define a map $\calS^\mu : [0,C_{f+F}/r]^d \to [0,C_{f+F}/r]^d$ to be:
\[
\calS^\mu (u)(x) := \inf_{a \in \A^d_x} \Big[\Big(\sum_{y,y\neq x} a_y + r\Big)^{-1}\Big( \sum_{y,y\neq x} a_yu_y + f(x,a) + F(x,\mu)\Big)\Big],
\] 
where recall that $\mathbb{A}^d_x:= \{ a\in\R^d : \forall y\neq x,\quad a_y\in [\fra_l, \fra_u], \quad a_x = -\sum_{y,y\neq x} a_y\}$. 
Then, 
\begin{align*}
    |S^\mu (u-\tilde u)(x)| &\leq \sup_{a\in\A^d_x} \left[ \Big(\sum_{y\neq x} a_y + r\Big)^{-1} \sum_{y\neq x} a_y |u_y - \tilde u_y|\right] \\
    &\leq \sup_{a\in\A^d_x} \left[ \frac{\sum_{y\neq x} a_y}{\sum_{y\neq x} a_y + r} \|u - \tilde u\|_{\sup}\right]\\
    &\leq \frac{(d-1)\mathfrak{a}_u}{(d-1)\mathfrak{a}_u +r}\|u-\tilde u\|_{\sup},
\end{align*} where $\|\cdot\|_{\sup}$ is the sup-norm on $\R^d$. Taking the supremum of the left hand side over $x\in [d]$, $\calS^\mu$ is a contraction with respect to the sup-norm. By the Banach fixed point theorem, there exists a unique fixed point $u^\mu$ of $S^\mu$ and we note that this fixed point satisfies:
\[
ru^\mu_x =  H(x,\Delta_x u^\mu) + F(x,\mu).
\] Therefore, the map $\calT_1 : \calP([d])\to [0,C_{f+F}/r]^d$ which sends $\mu\mapsto u^\mu$ is well-defined. Suppose $u$ and $\tilde u$ are the fixed points of $\calS^\mu$ and $\calS^{\tilde \mu}$, respectively. We show that $\calT_1$ is Lipschitz. First, 
\begin{align*}
    |\calT_1(\mu) - \calT_1(\tilde\mu)| = |(u - \tilde u)(x)| &\leq |S^\mu (u-\tilde u)(x)| + |(S^\mu - S^{\tilde \mu})(\tilde u)(x)| \\
    &\leq \frac{(d-1)\mathfrak{a}_u}{(d-1)\mathfrak{a}_u +r} \|u - \tilde u\|_{\sup} + |(S^\mu - S^{\tilde \mu})(\tilde u)(x)|.
\end{align*} By Lipschitz continuity of $F$ and boundedness of the controls, the last term can be bounded as follows:
\[
|(S^\mu - S^{\tilde \mu})(\tilde u)(x)| \leq \sup_{a\in \A^d_x} \left| \frac{F(x,\mu) - F(x,\tilde \mu)}{\sum_{y\neq x}a_y +r }\right| \leq \frac{C_{L,F}}{(d-1)\mathfrak{a}_l + r}\|\mu - \tilde \mu\|_{\sup}.
\] Combining this with the previous computation proves that $\calT_1$ is Lipschitz. Let the function $\calT_2 : [0,C_{f+F}/r]^d \to \calP([d])$ be defined by a composition of two functions. First, one which maps $[0,C_{f+F}/r]^d \ni (u_x)_{x\in [d]}\mapsto (\gamma^*_y(x,\Delta_x u))_{x,y\in [d]}\in\calQ$ and second which maps a fixed transition matrix $\beta\in\calQ$ to its corresponding stationary distribution $\bar{\mu}_\beta\in\calP([d])$. Note that the second function is indeed well-defined since the rates are bounded away from zero on a finite state space and thus the corresponding jump process has a unique stationary distribution. Since $\gamma^*$ is Lipschitz and the stationary distribution depends continuously on the transition matrix of the Markov chain, $\calT_2$ is continuous. Thus, the map $\calT_1 \circ \calT_2 : [0,C_{f+F}/r]^d\to [0,C_{f+F}/r]^d$ has a fixed point by Brouwer. That is,
\[
 -r\bar u^r_x+H(x,\Delta_x\bar u^r)+F(x,\bar \mu^r)=0,
\] where $\bar \mu^r$ is the stationary distribution associated to $\gamma^*(x,\Delta_x \bar u^r)$ and so simultaneously:
\[
\sum_{y\in[d]}\bar\mu^r_y\gamma^*_x(y,\Delta_y\bar u^r)=0.
\]

{\bf Showing bound for $\bar \mu^r$, independent of $r$:}  Let $x\in [d]$ be chosen so that $\bar\mu_x^r$ is minimal. Using that $\bar \mu^r$ is a stationary solution, and the definition of $\gamma^*$:
\begin{align*}
    \bar\mu_x^r &= \frac{-\sum_{y,y\neq x} \bar\mu_y^r \gamma_x^*(y,\Delta_y \bar u^r)}{\gamma_x^*(x,\Delta_x \bar u^r)}\\
    &= \sum_{y,y\neq x} \bar\mu_y^r \frac{\gamma_x^*(y,\Delta_y \bar u^r)}{\sum_{z,z\neq x} \gamma_x^*(z,\Delta_z \bar u^r)}\\
    &\geq \sum_{y,y\neq x} \bar\mu_y^r \frac{\mathfrak{a}_l}{(d-1) \mathfrak{a}_u},
\end{align*} where we now denote $A:=\mathfrak{a}_l/((d-1) \mathfrak{a}_u)$. Recall that $\bar\mu_x + \sum_{y,y\neq x} \bar\mu_y = 1$. Combining this observation with the previous inequality and the fact that $\bar\mu_x^r\geq 0$ yields:
\begin{align*}
    \bar\mu_x^r \geq \frac{A}{1+A}>0.
\end{align*} We note that $x$ was chosen minimally and that the bound is independent of $r$.

{\bf Uniqueness of a solution to \eqref{stat_MFG_r}:} Let $(\bar u^r, \bar \mu^r)$ and $(\bar v^r, \bar\theta^r)$ solve the stationary discounted MFG \eqref{stat_MFG_r}. We claim the following duality,
\begin{align*}
    C_{2,H} \sum_{x\in [d]} |\Delta_x (\bar u^r - \bar v^r)|^2 (\bar \mu_x^r + \bar \theta_x^r)\leq r (\bar u^r - \bar v^r)\cdot(\bar \mu^r - \bar \theta^r). \numberthis{\label{eqn:stat_mfgr_duality}}
\end{align*} To see this, we just follow the same lines as in the proof of Lemma \ref{lem:val-meas-duality}; that is, we replace $(\tilde u^r, \tilde\mu^r)$ with $(\bar v^r, \bar \theta^r)$ and $(u^r,\mu^r)$ with $(\bar u^r,\bar \mu^r)$. Then, we repeat the same computation and in this case the derivative quantities are replaced by just adding zero.

Next, we claim that there exists $C>0$ independent of $r$ such that:
    \begin{align}\label{cor:T2Lipschitz}
|\bar\mu^r-\bar\theta^r|
\le 
C\Big(\sum_{x\in[d]}|\Delta_x(\bar u^r-\bar v^r)|^2\Big)^{1/2}.
\end{align} To see this, use Lemma \ref{lem:M0} with $m(t)\equiv \bar\mu^r - \bar\theta^r$ and \eqref{stat_MFG_r} to obtain $c,C>0$ independent of $r$ that may change from one line to the next such that:
\begin{align*}
    |\bar \mu^r - \bar \theta^r| &\leq Ce^{-ct} |\bar \mu^r - \bar \theta^r| + C \Big|\Big(\sum_{y\in [d]} \bar{\theta}^r_y\left[ \gamma^*_x(y,\Delta_y \bar u ^r) - \gamma^*_x(y,\Delta_y \bar v ^r)\right]\Big)_{x\in [d]} \Big| \\
    &\leq Ce^{-ct} |\bar \mu^r - \bar \theta^r| + C \sqrt{\sum_{x\in [d]}\Big(\sum_{y\in [d]} \bar\theta^r_y\left| \gamma^*_x(y,\Delta_y \bar u ^r) - \gamma^*_x(y,\Delta_y \bar v ^r)\right|\Big)^2}\\
    &\leq Ce^{-ct} |\bar \mu^r - \bar \theta^r| + C \sqrt{\Big(\sum_{y\in [d]} \bar\theta^r_y |\Delta_y(\bar u^r - \bar v^r)| \Big)^2} \\
    &\leq Ce^{-ct} |\bar \mu^r - \bar \theta^r| + C\sum_{y\in [d]} \bar\theta^r_y |\Delta_y(\bar u^r - \bar v^r)| \\
    &\leq Ce^{-ct} |\bar \mu^r - \bar \theta^r| + C|\bar\theta^r| \Big(\sum_{y\in [d]}  |\Delta_y(\bar u^r - \bar v^r)|^2\Big)^{1/2},
\end{align*} where we used the Lipschitz continuity of $\gamma^*$ and Cauchy--Schwarz. We have that $|\bar\theta^r|$ is bounded by $1$ and so choosing $t>0$ large enough proves \eqref{cor:T2Lipschitz}.

We now establish uniqueness by a duality argument. With the reasoning below, there are $C_1,C_2,C_3>0$, independent of $r$, such that,
\begin{align*}
C_1^{-1}\sum_{x\in[d]}|\Delta_x(\bar u^r-\bar v^r)|^2
&\le 
C_2^{-1}\sum_{x\in[d]}|\Delta_x(\bar u^r-\bar v^r)|^2(\bar\mu^r_x + \bar \theta^r_x)
\\
&\le 
 r \sum_{x\in [d]}(\bar u^r_x-\bar v^r_x)(\bar\mu^r_x-\bar\theta^r_x)
\\
&= r \Big[\sum_{x\in [d]} (\bar u^r_x-\bar v^r_x)(\bar\mu^r_x-\bar\theta^r_x) - \sum_{x\in [d]} (\langle \bar u^r \rangle - \langle\bar v^r\rangle)(\bar\mu^r_x-\bar\theta^r_x)\Big]
\\
&\le r|\bar \mu^r - \bar \theta^r||(\bar u^r - \bar v^r)-(\langle \bar u^r\rangle -\langle \bar v^r\rangle)|
\\
&\le 
rC_3\Big(\sum_{x\in[d]}|\Delta_x(\bar u^r-\bar v^r)|^2\Big)^{1/2}|\bar\mu^r-\bar\theta^r|.
\end{align*} The first inequality follows since the stationary distributions are bounded away from zero; the second by \eqref{eqn:stat_mfgr_duality}; the third line follows since $\bar \mu^r - \bar\theta^r \in \calM_0$; the fourth is Cauchy--Schwarz after combining the sums; and the final line is Lemma \ref{lem:b}. Combining it with \eqref{cor:T2Lipschitz} implies that for sufficiently small $r$, $(\bar u^r,\bar\mu^r) = (\bar v^r, \bar\theta^r)$. 

\subsection{Proof of (ii)} 
We start with establishing the existence of a solution to \eqref{erg_MFG}. Then, we prove the convergence of the solution of \eqref{stat_MFG_r} to the solution of 
\eqref{erg_MFG} as $r\to 0$, which ultimately, implies uniqueness to \eqref{erg_MFG}.

{\bf Existence of a solution to \eqref{erg_MFG}:} Fix $y_1,y_2\in [d]$. Existence follows from a vanishing discount argument applied to $(r\bar u^r_{y_1}, \bar u^r_{y_1} - \bar u^r_{y_2}, \bar\mu^r)$, which we establish in several steps. First, we note that $\{\bar\mu^r\}_r$ belong to the compact set $\calP([d])$. By \eqref{bound_u}, we know that $|r \bar u_{y}^r| \leq C_{f+F}$ for any $y\in [d]$. So, we turn to showing that $|\bar u^r_{y_1} - \bar u^r_{y_2}|$ is uniformly bounded. Let $(X_t)_{t\geq 0}$ be a jump process on $[d]$, starting at $y_1 \in [d]$, with rates $\bar\al_{xy}^r:=\gamma^*_y(x,\Delta_x \bar u^r)$, $\bar\al_{x}^r := (\bar\al^r_{xy})_{y\in [d]}$. As with the derivation of \eqref{bound_u}, we can apply It\^o's lemma to $e^{-rt} \bar u^r_{X_t}$, use \eqref{stat_MFG_r}, and take expectations on both sides to obtain:
\begin{align}\label{eqn:equiv_u_def}
    \bar u_{y_1}^r &= \EE_{y_1}\Big[ \int_0^\iy e^{-rt} \sum_{x\in [d]} \1_{\{X_{t} = x\}} \big(f(x,\bar\al^r) + F(x,\bar\mu^r)\big) dt \Big] = J(0,y_1,\bar\al^r,\bar\mu^r).
\end{align} So in order to find a uniform bound for $\bar u^r_{y_1} - \bar u^r_{y_2}$, we can deal with the equivalent discounted cost functional. Let $\tau_{y_2}$ be the stopping time:
\[
\tau_{y_2}:=\inf\{t\geq 0 \mid X_t = y_2\}.
\] Notice that:
\begin{align*}
    J_r(0,y_1,\bar\al^r,\bar\mu^r) &= \EE_{y_1}\Big[ \int_0^{\tau_{y_2}} e^{-rt} \big(f(X_{t},\bar\al^r) + F(X_{t},\bar\mu^r)\big) dt  + J_r(0,X_{\tau_{y_2}},\bar\al^r,\bar\mu^r) \\
    &\qquad - (1-e^{-r\tau_{y_2}})J_r(0,X_{\tau_{y_2}},\bar\al^r,\bar\mu^r) \Big]. \numberthis{\label{eqn:equiv_u_def_2}}
\end{align*} By boundedness of $f$ and $F$ and since $e^{-rt}\leq 1$, the integral on the right hand side is bounded above by $\tau_{y_2}C_{f+F}$. Moreover, the numerical inequality $( 1-e^{-rm})/r \leq m$ implies that $r\tau_{y_2} |\bar u_{y_2}^r|$ is a bound for the final term on the right hand side. All together, \eqref{eqn:equiv_u_def_2} and these observations imply:
\begin{align*}
    |\bar u^r_{y_1} - \bar u^r_{y_2}| = |J_r(0,y_1,\bar\al^r,\bar\mu^r)  - J_r(0,y_2,\bar\al^r,\bar\mu^r)| &\leq \EE_{y_1}[\tau_{y_2}](C_{f+F} + r |\bar u^r_{y_2}|).
\end{align*} Since the jump rates of $X_t$ are in $\A$, a compact set bounded away from zero, $\EE_{y_1}[\tau_{y_2}]\leq C$ for some positive constant depending only on $\mathfrak{a}_l$ and $\mathfrak{a}_u$. Using the uniform bound for $r\bar u^r$, we see that $\bar u^r_{y_1} - \bar u^r_{y_2}$ is bounded uniformly in $r$ too. 

Therefore, for any sequence of vanishing discounts $(r_n)_{n\in\N}$, $r_n\to 0$, we obtain a subsequence, which by abuse of notation we also denote by $(r_n)_n$, for which $(r\bar u^r_{y_1}, \bar u^r - \langle\bar u^r\rangle,\bar \mu^r)$ converges. Call this limit $(\bar\vr_{y_1}, \bar u, \bar\mu)\in\R\times\calM_0\times\calP([d])$. Moreover, for any $x\in [d]$:
\begin{align*}
    \lim_{n\to\iy} [\Delta_{x} \bar u^{r_n}]_{y_1} &= \lim_{n\to\iy}[\bar u^{r_n}_{y_1} - \bar u^{r_n}_x] \\
    &= \lim_{n\to\iy}[\bar u^{r_n}_{y_1} -\langle\bar u^r\rangle + \langle\bar u^r\rangle - \bar u^{r_n}_x] \\
    &= \bar u_{y_1} - \bar u_{x} \\
    &= [\Delta_x \bar u]_{y_1}.
\end{align*} With this observation, we note that the limit solves \eqref{erg_MFG} if we can remove the dependence on $y_1\in [d]$ for $\bar\vr_{y_1}$. To this end, set $\bar\al_{xy} :=  \gamma^*_y(x,\Delta_x \bar u)$ and note that by the continuity of $f$, $\gamma^*$, and $F$, combined with the Tauberian theorem (see, e.g. \cite[Proposition~A.5]{MR2554588}), we have:
\[
r_n J_{r_n} (0,y_1,\bar\al^{r_n},\bar\mu^{r_n} ) \to \limsup_{T\to\iy} \frac{1}{T} \EE_{y_1} \int_0^T (f(X_t,\bar\al) + F(X_t,\bar\mu)) dt = J_0(\bar\al,\bar\mu),
\] 
So $\bar\vr_{y_1} = J_0(\bar\al,\bar\mu)$, where the latter is independent of $y_1\in [d]$ as desired.

{\bf Convergence of stationary discounted to ergodic:} To establish the convergence to \eqref{erg_MFG}, we argue similarly as in the proof of uniqueness for \eqref{stat_MFG_r}. We claim that
\begin{align*}
    C_{2,H} \sum_{x\in [d]} |\Delta_x (\bar u^r-\bar u)|^2 (\bar\mu^r_x + \bar \mu_x) \leq (r\bar u^r - \bar\varrho) \cdot (\bar\mu^r - \bar\mu). \numberthis{\label{eqn:stat_mfgr_erg_duality}}
\end{align*} To see this, the computation is mostly similar to that of \eqref{eqn:stat_mfgr_duality} (that is, Lemma \ref{lem:val-meas-duality}), but we make note of the subtle differences:
\begin{align*}
    \sum_{x\in [d]} (r\bar u_x^r - \bar\varrho) (\bar\mu_x^r - \bar\mu_x) &= \sum_{x\in [d]} \Big(H(x,\Delta_x \bar u^r) - H(x,\Delta_x \bar u) +F(x,\bar\mu^r) - F(x, \bar\mu) \Big)(\bar\mu_x^r - \bar\mu_x) \\
    &\geq \sum_{x\in [d]} \Big(H(x,\Delta_x \bar u^r) - H(x,\Delta_x \bar u) \Big)(\bar\mu_x^r - \bar\mu_x) \\
    &=  \sum_{x\in [d]} (H(x,\Delta_x \bar u^r) - H(x,\Delta_x \bar u) +\Delta_x(\bar u^r - \bar u)\cdot \gamma^*(x,\Delta_x\bar u^r) )\bar\mu_x^r \\
    &\quad\qquad + (H(x,\Delta_x \bar u) - H(x,\Delta_x \bar u^r) +\Delta_x(\bar u - \bar u^r)\cdot \gamma^*(x,\Delta_x\bar u) )\bar\mu_x \\
    &\geq \sum_{x\in [d]} C_{2,H}|\Delta_x (\bar u^r - \bar u)|^2(\bar\mu^r_x + \bar\mu_x),
\end{align*} where in each line (resp.) we used the value equations from \eqref{stat_MFG_r} and \eqref{erg_MFG}, Lasry--Lions monotonicity, the measure equations from \eqref{stat_MFG_r} and \eqref{erg_MFG} and  the fact that $\sum_x\gamma^*_x(y,\Delta_y \bar u^r)=\sum_x\gamma^*_x(y,\Delta_y \bar u)=0$, and finally the bound on $D^2_{pp}H(x,p)$. Moreover, we claim that if $(\bar \varrho,\bar u,\bar \mu)\in\R\times \calM_0 \times \calP([d])$ solves the ergodic MFG system \eqref{erg_MFG}, then
\begin{align*}
    |\bar\mu^r - \bar\mu| \leq C\Big(\sum_{x\in[d]}|\Delta_x(\bar u^r-\bar u)|^2\Big)^{1/2}.
\end{align*} The proof follows the same lines as that of \eqref{cor:T2Lipschitz}. Finally, there exist $C_1,C_2,\hat C >0$ such that:
\begin{align*}
C^{-1}_1\sum_{x\in[d]}|\Delta_x(\bar u^r-\bar u)|^2
&\le 
C^{-1}_2\sum_{x\in[d]}|\Delta_x(\bar u^r-\bar u)|^2(\bar\mu^r_x+\bar \mu_x)
\\
&\leq \sum_{x\in [d]}(r\bar u_x^r -\bar \varrho)(\bar\mu_x^r- \bar\mu_x) \\
&= \sum_{x\in [d]} (r\bar u^r_x -\langle r \bar  u^r \rangle )(\bar\mu^r_x - \bar\mu_x) \\
&\leq r\hat C \Big(\sum_{x\in [d]} |\Delta_x \bar u^r|^2 \Big)^{1/2},
\end{align*} where we used the uniform bound of $\bar\mu^r$ away from $0$, \eqref{eqn:stat_mfgr_erg_duality}, the fact that $\sum_{x\in [d]} k (\bar\mu_x^r - \bar\mu_x)  = 0$ for any constant $k$ since $\bar\mu^r - \bar\mu \in \calM_0$, and then Lemma \ref{lem:b}.

{\bf Uniqueness of a solution to \eqref{erg_MFG}:} Uniqueness follows immediately by the previous section. That is, let $(\bar \vr,\bar u,\bar\mu)$, $(\tilde\vr,\tilde u, \tilde \mu) \in \R\times\calM_0\times \calP([d])$ be two solutions to \eqref{erg_MFG}. Fix $x\in [d]$ arbitrarily and by the bound in Proposition \ref{prop:statDisc_erg_mfg} (ii), we have:
\begin{align*}
    |\bar\vr - \tilde\vr| + |\Delta_x (\bar u - \tilde u)| + |\bar\mu - \tilde\mu| &\leq |r\bar u^r_x - \bar\vr| + |r\bar u^r_x - \tilde\vr| + |\Delta_x (\bar u^r - \bar u)| + |\Delta_x (\bar u^r - \tilde u)| \\
    &\qquad + |\bar\mu^r - \bar\mu| + |\bar\mu^r - \tilde\mu| \\
    &\leq Cr^{1/2}.
\end{align*} Since the left hand side is independent of $r$, we can pass the limit $r\to 0$ to get uniqueness.

\subsection{Proof of (iii)} We now show that the solution to the ergodic MFG system \eqref{erg_MFG} characterizes a stationary ergodic MFE in the sense of Definition \ref{def:stat_erg_MFE}. First, let $(\bar\vr,\bar u,\bar\mu)\in \R\times \calM_0\times \calP([d])$ be the solution to the ergodic MFG system \eqref{erg_MFG}. We claim that  $(\bar\al,\mu)$ is a stationary ergodic MFE, where $\bar\al_{xy} := \gamma^*_y(x,\Delta_x \bar u)$ and $\mu(t) := \PP\circ (X^{\bar \al}_t)^{-1}$.

We start with showing that $J_0(\bar \al,\mu) = J_0(\bar\al,\bar\mu)$, where in the latter expression we abuse notation and consider the flow of constant distributions $\bar\mu$. Let $\eps>0$ be arbitrary. Since $\bar\mu$ satisfies: 
\[\sum_{x\in [d]} \bar\mu_x \gamma^*_y(x,\Delta_x \bar u)=0,\] we note that $\bar\mu$ is the stationary distribution of the exponentially ergodic Markov chain $X^{\bar\al}_t$. And since $\mu(t)$ is the law of $X_t^{\bar \al}$, $\mu(t)\to\bar\mu$ as $t\to\iy$, and we can choose $T_0>0$ so that for all $t>T_0$, we have $|\mu(t) - \bar\mu| < \eps$. Using the definition of the cost functional, taking the absolute value inside, and using Lipschitz continuity of $F$,
\begin{align*}
    |J_0(\bar\al,\mu) - J_0(\bar\al, \bar\mu)| &\le \Big|\limsup_{T\to\iy} \frac{1}{T} \EE \int_0^T \big( F(X^{\bar\al}_t, \mu(t)) - F(X^{\bar\al}_t,\bar\mu) \big) dt \Big| \\
    &= \Big|\limsup_{T\to\iy} \frac{1}{T} \EE \int_{T_0}^T \big( F(X^{\bar\al}_t, \mu(t)) - F(X^{\bar\al}_t,\bar\mu) \big) dt \Big| \\
    &\leq \eps C_{L,F} .  
\end{align*} Since $\eps>0$ was arbitrary, we have the desired equality.

We now show that $\bar\vr = J_0(\bar\al,\bar\mu)=J_0(\bar\al,\mu)$ and that with any other strategy there is inequality; the latter will follow since the Hamiltonian in \eqref{erg_MFG} is a minimizer and hence, $\bar\al = \argmin_{\al\in\calA} J_0(\al,\mu)$. We proceed by applying It\^o's lemma to $\bar u_{X_t^{\bar\al}}$, and taking expectations 
\begin{align*}
    \EE[\bar u_{X_T^{\bar\al}}] - \bar u_{X_0^{\bar\al}} = \EE\int_0^T \sum_{x\in [d]} \1_{\{X^{\bar\al}_{t} = x\}} \big\{\bar\al_x \cdot \Delta_x \bar u\big\} dt.
\end{align*} 
Divide by $T$, pass the limit as $T\to\iy$, and then apply \eqref{erg_MFG}, to get:
\begin{align*}
    0 &= \limsup_{T\to\iy}\frac{1}{T}\EE \int_0^T \sum_{x\in [d]} \1_{\{X^{\bar\al}_{t} = x\}} \big\{\bar\al_x \cdot \Delta_x \bar u\big\} dt \\
    &= \limsup_{T\to\iy}\frac{1}{T}\EE \int_0^T \sum_{x\in [d]} \1_{\{X^{\bar\al}_{t} = x\}} \big\{-f(x,\bar\al) - F(x,\bar\mu) + \bar\vr \big\} dt. \\
\end{align*} Note that $\bar\vr$ does not depend on $x\in [d]$ or $t\in\R_+$ and so we can bring it to the left hand side. Negating both sides and recalling the definition of the ergodic cost,
\begin{align*}
    \bar\vr = J_0 (\bar\al, \bar\mu).
\end{align*} We note that we have equality because $\bar\al$ is the unique minimizer prescribed by $\gamma^*$ and moreover, for all other strategies $\al$, we have inequality. Therefore,
\begin{align*}
    J_0(\bar\al, \bar\mu) = \bar\vr \leq J_0(\al,\bar\mu),
\end{align*} for all $\al\in\calA_s$ and so
\begin{align}\label{MFE:forward_1}
    \bar\al = \argmin_{\al \in \calA_s} J_0(\al,\bar\mu).
\end{align} Together with the definition of $\mu(t)$, \eqref{MFE:forward_1} implies that $(\bar\al,\mu)$ is a stationary ergodic MFE.

We now prove the converse---that is, a stationary ergodic MFE naturally characterizes the stationary ergodic MFG solution. Let $(\tilde \al, \tilde\mu) \in \calA_s \times \calC^1(\R_+,\calP([d]))$ be a stationary ergodic MFE. For simplicity, think about $\tilde\alpha$ as a rate matrix in $\calQ[\A]$. 
Denote the limiting distribution of $X^{\tilde\al}_t$ by $\tilde\mu^\iy$ and set the vector $\ell := (f(x,\tilde\al) + F(x,\tilde \mu^\iy))_{x\in [d]}$. Correspondingly, we denote the limiting (as $t\to\iy$) transition matrix of $e^{\tilde\al t}$ by $[\tilde \mu^\iy]$. Define the vector of potentials as:
\[
\tilde u := \check u-\langle \check u\rangle\in\calM_0,\qquad\text{where}\qquad \check u:=\int_0^\iy (e^{\tilde\al t} - [\tilde\mu^{\iy}]) \ell dt.
\] Note that since $X^{\tilde\al}_t$ is an exponentially ergodic Markov process and since $f$ and $F$ are bounded, the integral is finite and hence $\tilde u$ is well-defined. Since $\tilde\alpha\in\calQ[\A]$, we have $\tilde\alpha\langle \check u\rangle=0$. Moreover, every column of $[\tilde \mu^\iy]$ is the stationary distribution of the rate matrix $\tilde\al$, so we can multiply the potential vector on the left by $\tilde\al$ and compute:
\begin{align*} 
    \tilde\al \tilde u &= \int_0^{\iy} (\tilde\al e^{\tilde \al t} - \tilde \al [\tilde \mu^\iy]) \ell dt \\ \notag
    &= \int_0^{\iy} \tilde\al e^{\tilde \al t} \ell dt \\ \notag
    &= \lim_{T\to\iy} \int_0^T \Big[\frac{d}{dt} e^{\tilde \al t}\Big] \ell dt \\ 
    &= \lim_{T\to\iy} (e^{\tilde \al T} - I) \ell \\ 
    &= ([\tilde\mu^{\iy}] - I) \ell.  \numberthis\label{eqn:bias_stationary}
\end{align*} Define the MFG value $\tilde \vr$ as $\tilde \vr := J_0(\tilde \al, \tilde\mu)$. Since $\tilde\mu(t) \to\tilde\mu^\iy$, we have $\tilde\vr = J_0(\tilde \al, \tilde\mu^\iy)$. Recall that $\tilde\mu^\infty$ is the stationary limiting distribution of the process $X_{t}^{\tilde\al}$. Summarizing the above and using ergodicity,
\begin{equation}\label{eqn:varrho}
    \tilde \vr := J_0(\tilde \al, \tilde\mu) = J_0(\tilde\al,\tilde\mu^\iy) =  \sum_{y\in [d]} \tilde\mu_y^\iy (f(y,\tilde\al) + F(y,\tilde\mu^\iy)).
\end{equation}
This implies that $\tilde \vr\vec{1}=[\tilde\mu^\iy] \ell$. Making use of this observation, the fact that the columns of a rate matrix sum to the zero vector, and looking at the $x$-component of \eqref{eqn:bias_stationary}, we get,
\begin{align*}
    \tilde \al_x \cdot \Delta_x \tilde u =\sum_{y\in [d]} \tilde \al_{xy} \tilde u_y = \tilde\vr - f(x,\tilde\al) - F(x,\tilde\mu^\iy).
\end{align*} 

We now turn to showing that $\tilde\al_{z}= \gamma^*(z,\Delta_x\tilde u)$. Arguing by contradiction, assume that there exists $z\in[d]$ such that $\tilde\al_{z}\ne \gamma^*(z,\Delta_x\tilde u)$. Then, from the above and by the uniqueness of the minimizer $\gamma^*$ for the Hamiltonian $H$, there exists $\eps>0$, such that, 
\[
\tilde \vr -\eps\geq f(z,\gamma^*(z,\Delta_{z}\tilde u)) + \gamma^*(z,\Delta_{z}\tilde u)\cdot \Delta_{z} \tilde u + F(z,\tilde\mu^\iy),
\]
and for any $x\in [d]$,
\[
\tilde \vr \geq f(x,\gamma^*(x,\Delta_{x}\tilde u)) + \gamma^*(x,\Delta_{x}\tilde u)\cdot \Delta_{x} \tilde u + F(x,\tilde\mu^\iy).
\]
Let $X^\gamma_t$ be the jump process with rates given by $\gamma^*_y(x,\Delta_x \tilde u)$ and with starting state $X_0=x_0\in[d]$. 
Using It\^o's lemma on $\tilde u_{X^\gamma_t}$, taking expectations, and dividing by $T$,
\begin{align*}
    \frac{\EE[\tilde u_{X^\gamma_T}] - \tilde u_{X^\gamma_0}}{T} 
    &= \frac{1}{T} \EE\int_0^T \big(\gamma^*(X^{\gamma}_t,\Delta_{X^{\gamma}_t}\tilde u))\cdot \Delta_{X^{\gamma}_t} \tilde u \big) dt\\ 
    &= \frac{1}{T} \EE\int_0^T\sum_{x,x\ne z}\one_{\{X^{\gamma}_{t}=x\}} \big(\gamma^*(x,\Delta_{x}\tilde u))\cdot \Delta_{x} \tilde u \big) dt\\\numberthis\label{eqn:MFE_Ito1}
    &\quad+ \frac{1}{T} \EE\int_0^T\one_{\{X^{\gamma}_{t}=z\}} \big(\gamma^*(z,\Delta_{z}\tilde u)\cdot \Delta_{z} \tilde u \big) dt\\
    &\le \tilde \varrho-\eps\frac{1}{T} \int_0^T\EE[\one_{\{X^\gamma_{t}=z\}}]dt\\
    &\quad -\frac{1}{T} \EE\int_0^T\left[f(X^\gamma_t,\gamma^*(X^{\gamma}_{t},\Delta_{X^{\gamma}_{t}}\tilde u))  + F(X^{\gamma}_{t},\tilde\mu^\iy)\right]dt. 
\end{align*}
Since the rates of the chain $(X^\gamma_t)_{t\geq 0}$ are in the compact set $\A$ which is bounded away from zero, it follows that the stationary distribution is strictly positive. Hence, 
$$
\lim_{T\to\iy}\frac{1}{T} \int_0^T\EE[\one_{\{X^\gamma_{t}=z\}}]dt>0.
$$
Taking $\limsup_{T\to\iy}$ on both sides of \eqref{eqn:MFE_Ito1}, we obtain that 
$$
J_0(\gamma^*(\cdot,\Delta_\cdot\tilde u),\tilde\mu^\iy)<\tilde \varrho.
$$
Combining it with \eqref{eqn:varrho}, and the argument proceeding it, we get,
$$
J_0(\gamma^*(\cdot,\Delta_\cdot\tilde u),\tilde\mu)<J_0(\tilde\al,\tilde\mu),
$$
which contradicts the optimality of $\tilde\al$. As a result, for any $x,y\in[d]$, $\tilde\al_{xy}= \gamma^*_y(x,\Delta_x\tilde u)$.
That is, $(\tilde\vr,\tilde u,\tilde\mu^\iy)$ satisfies the value equation of \eqref{erg_MFG}, so it remains to show that the corresponding Kolmogorov equation simultaneously holds. 

Since $\tilde\mu^\iy$ is the stationary distribution associated to $\tilde\al$, this means
\[
\sum_{y\in [d]} \gamma^*_x(y,\Delta_y \tilde u) \tilde\mu_y^\iy =0. 
\] Therefore, $(\tilde \vr, \tilde u, \tilde \mu^\iy)$ satisfies the ergodic MFG system \eqref{erg_MFG} and by uniqueness of the solution of \eqref{erg_MFG}, $\tilde \vr = \bar\vr$, $\tilde\mu^\iy = \bar\mu$, and $\tilde u$ is equal to $\bar u$ up to a constant. So, we are able to recover the stationary ergodic MFG solution from the stationary ergodic MFE.
\qed

\section{Preliminary Results: Linearized System around the Stationary MFG System}\label{sec:preparation}

In what follows, we present several intermediary ODE systems and results in order to estimate the difference between solutions to the discounted MFG system with different initial conditions in Section \ref{sec:proof_prop_MFG}. We also use these linearized systems in order to approach the discounted master equation \eqref{ME^r} and establish regularity properties of its solution $U_r$ uniformly in $r$ around zero in Section \ref{sec:proof_MEr}. By a vanishing discount argument, we will prove results for the ergodic master equation \eqref{ME} and its solution $U_0$. In order to accomplish this, we require $U_r$ is sufficiently regular, uniformly in $r\in (0,r_0)$ for small enough $r_0>0$. To this end, we introduce and analyze certain linearized systems. Since we define $U_r$ through $u^r$, $U_r$ inherits some regularity through the linearized systems that we define for the discounted problems.

Some of the results in this subsection are adaptations of Cardaliaguet--Porretta \cite[Section 3]{car-por} to the finite-state case. To make the paper self-contained, and since we consider a finite-state Markov chain model rather than diffusions on a torus, we provide all the details. This section culminates in Lemma \ref{lem:lin_MFG_r_exp_A_B}, a key lemma that allows introduction of a strong decay into bounds on the linearized system's solutions.

Given the solution $(\bar u^r, \bar\mu^r)$ to \eqref{stat_MFG_r}, $m_0\in\calM_0$, and measurable functions $A:\R_+\to\R^d$ and $B:\R_+\to\calM_0$, consider the linearized system around $(\bar u^r,\bar \mu^r)$, defined for $t\in\R_+$ by:
\begin{align}
\label{lin_MFG_r_around_stat}
\begin{cases}
-\frac{d}{dt}v_x(t)=-rv_x(t) + \gamma^*(x,\Delta_x \bar u^r)\cdot \Delta_x v(t) + D^\eta_1 F(x,\bar\mu^r)\cdot m(t)+A_x(t),\\
\frac{d}{dt}m_x(t)=\sum_{y\in[d]} m_y(t)\gamma^*_x(y,\Delta_y \bar u^r) + \sum_{y\in[d]}\bar \mu^r_y \nabla_p\gamma^*_x(y,\Delta_y \bar u^r)\cdot \Delta_y v(t)+B_x(t),\\
m(0)=m_{0},\qquad\text{$v$ is bounded}.
\end{cases}
\end{align}

\noindent

The existence and uniqueness of this system on a finite time horizon with terminal condition for $v$ follows by the same arguments given in \cite{bay-coh2019}. The proof in the infinite horizon case relies on estimates that are driven by the following linearized duality lemma. This lemma is also useful in the sequel.
\begin{lemma}[Linearized duality]\label{lem:duality}
Assume that $(v,m)$ satisfies \eqref{lin_MFG_r_around_stat} on the interval $[0,T]$ with $A\equiv 0$. Then, there are constants $C_1,C_2>0$ independent of $t,T,m_0$ and $r$, such that for any $0\leq t_1 < t_2\leq T$,
\begin{align}\notag
\int_{t_1}^{t_2}e^{-rs}|\Deltadot v(s)|^2ds
&\le 
- C_1\Big[e^{-rs}m(s)\cdot v(s)\Big]^{t_2}_{t_1}
+C_1\int_{t_1}^{t_2}e^{-rs}|B(s)|^2ds
\\\notag
&\le 
C_2\left(e^{-rt_1}|\Deltadot v(t_1)|| m(t_1)|+e^{-rt_2}|\Deltadot v(t_2)|| m(t_2)|\right) + C_2\int_{t_1}^{t_2}e^{-rs}|B(s)|^2ds
.
\end{align}
\end{lemma}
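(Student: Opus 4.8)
The plan is to carry out the linearized analogue of the value--measure duality of Lemma \ref{lem:val-meas-duality}: differentiate the bilinear pairing $e^{-rt}m(t)\cdot v(t)$ in time, testing the $v$-equation of \eqref{lin_MFG_r_around_stat} against $m$ and the $m$-equation against $v$. Two preliminary facts are used along the way. Summing the $m$-equation over $x\in[d]$ and invoking $\sum_x\gamma^*_x(y,\cdot)=0$, $\sum_x\nabla_p\gamma^*_x(y,\cdot)=\nabla_p\big(\sum_x\gamma^*_x(y,\cdot)\big)=0$, and $B(t)\in\calM_0$, one finds $\frac{d}{dt}\sum_x m_x(t)=0$, so $m(t)\in\calM_0$ for every $t$. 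Also, since $H(y,\cdot)$ depends on its momentum variable only through the differences $(p_z-p_y)_z$, it is invariant under $p\mapsto p+c\vec1$; hence $D_pH(y,p)\cdot\vec1\equiv0$ and, differentiating once more, $D^2_{pp}H(y,p)\vec1=0$.

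Next I would compute $\frac{d}{dt}\bigl(e^{-rt}m(t)\cdot v(t)\bigr)$ by the product rule, inserting both equations of \eqref{lin_MFG_r_around_stat} with $A\equiv0$. The $-r\,m\cdot v$ contribution from the exponential cancels the $+r\,m\cdot v$ produced by the $-rv_x$ term of the $v$-equation, and the two ``transport'' sums $\sum_{x,y}v_x m_y\gamma^*_x(y,\Delta_y\bar u^r)$ and $\sum_x m_x\,\gamma^*(x,\Delta_x\bar u^r)\cdot\Delta_x v$ cancel after relabeling indices and using $\sum_z\gamma^*_z(x,\cdot)=0$. Recognizing $\nabla_p\gamma^*_x(y,p)$ as the $x$-th column of $D^2_{pp}H(y,p)$ via \eqref{gamma_H}, and then using the translation invariance above to substitute $v=\Delta_y v+v_y\vec1$ in the resulting quadratic form, what remains is
\[
\frac{d}{dt}\bigl(e^{-rt}m(t)\cdot v(t)\bigr)
= e^{-rt}\Bigl[\sum_{y\in[d]}\bar\mu^r_y\,(\Delta_y v)^{T}D^2_{pp}H(y,\Delta_y\bar u^r)\,\Delta_y v
+ v\cdot B - \sum_{x\in[d]}m_x\,D^\eta_1 F(x,\bar\mu^r)\cdot m\Bigr].
\]

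Then I would feed in the three structural inputs. By ($A_5$) the first bracketed term is $\le -C_{2,H}\sum_y\bar\mu^r_y|\Delta_y v|^2\le -c_0|\Deltadot v|^2$ with $c_0>0$ independent of $r$: the uniform lower bound $\bar\mu^r_y\ge C^{-1}$ is Proposition \ref{prop:statDisc_erg_mfg}(i), and by Proposition \ref{prop:statDisc_erg_mfg}(ii) together with the boundedness of $\bar u$, all the momenta $\Delta_y\bar u^r$ lie in a single fixed compact set for $r<r_0$, so ($A_5$) applies with a common constant. The Lasry--Lions inequality \eqref{monotone_2} with $m(t)\in\calM_0$ makes $-\sum_x m_x D^\eta_1 F(x,\bar\mu^r)\cdot m\le0$, so that term only helps. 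Finally $v\cdot B=(v-\langle v\rangle)\cdot B\le\tfrac{1}{\sqrt{2d}}|\Deltadot v|\,|B|$ by Cauchy--Schwarz and Lemma \ref{lem:b}, since $B(t)\in\calM_0$. Hence $\frac{d}{dt}(e^{-rt}m\cdot v)\le e^{-rt}\bigl(-c_0|\Deltadot v|^2+c_1|\Deltadot v|\,|B|\bigr)$; since the $|\Deltadot v|^2$ coefficient is strictly negative, a weighted Young's inequality absorbs the cross term, leaving $\le e^{-rt}\bigl(-\tfrac{c_0}{2}|\Deltadot v|^2+\tfrac{c_1^2}{2c_0}|B|^2\bigr)$. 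Integrating over $[t_1,t_2]$ and rearranging yields the first asserted inequality for a suitable $C_1$. For the second one I would bound each endpoint contribution by $|m(t)\cdot v(t)|=|m(t)\cdot(v(t)-\langle v(t)\rangle)|\le\tfrac{1}{\sqrt{2d}}|m(t)|\,|\Deltadot v(t)|$, again by Cauchy--Schwarz and Lemma \ref{lem:b}, and enlarge the constant to $C_2$.

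I expect the one genuinely delicate point to be the algebra of the second step: the cancellation of the transport sums, and above all the reduction of $v^{T}D^2_{pp}H(y,\Delta_y\bar u^r)\,\Delta_y v$ to the honest quadratic form $(\Delta_y v)^{T}D^2_{pp}H(y,\Delta_y\bar u^r)\,\Delta_y v$, which is precisely what converts the concavity bound ($A_5$) into a coercive $-c_0|\Deltadot v|^2$ term; this rests on the identity $\gamma^*=D_pH$ and on the translation invariance of $H(y,\cdot)$ in the momentum variable. Everything else --- preservation of $\calM_0$ along the flow, the sign of the $F$-term, the elementary $v\cdot B$ estimate, the Young absorption, and the choice of $r_0$ keeping the momenta in a fixed compact set --- is routine.
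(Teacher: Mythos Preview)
Your proposal is correct and follows essentially the same route as the paper: differentiate $e^{-rt}m(t)\cdot v(t)$, cancel the transport terms via $\sum_x\gamma^*_x(y,\cdot)=0$, reduce the bilinear $v^{T}D^2_{pp}H\,\Delta_y v$ to the honest quadratic form in $\Delta_y v$ (the paper does this by the equivalent identity $\sum_x\nabla_p\gamma^*_x(y,\cdot)=0$), invoke \eqref{monotone_2} for the $F$-term, use the uniform lower bound on $\bar\mu^r$ from Proposition~\ref{prop:statDisc_erg_mfg}, and finish with $B\in\calM_0$, Lemma~\ref{lem:b}, and Young's inequality. The only cosmetic difference is that you spell out the translation-invariance argument for the Hessian more explicitly than the paper does.
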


\begin{proof}
The last inequality follows from Lemma \ref{lem:b}. 
Fix $0\le t_1 <t_2 \le T$. Then, the chain rule implies (and the reasoning is given below):
\begin{align*}
&\Big[e^{-rs}m(s)\cdot v(s)\Big]^{t_2}_{t_1}\\
&\quad
=\int_{t_1}^{t_2}e^{-rs}\sum_{x\in[d]}v_x(s)\Big[\sum_{y\in[d]}m_y(s)\gamma^*_x(y,\Delta_y\bar u^r)+\sum_{y\in[d]}\bar\mu^r_y\nabla_p\gamma^*_x(y,\Delta_y\bar u^r)\cdot\Delta_y v(s)+B_x(s)\Big]ds\\
&\qquad
-
\int_{t_1}^{t_2}e^{-rs}\sum_{x\in[d]}m_x(s)\Big[\gamma^*(x,\Delta_x\bar u^r)\cdot\Delta_x v(s)+D^\eta_1 F(x,\bar\mu^r)\cdot m(s)\Big]ds\\
&\quad
=
\int_{t_1}^{t_2}e^{-rs}\Big[\sum_{x\in[d]}v_x(s)\sum_{y\in[d]}\bar\mu^r_y\nabla_p\gamma^*_x(y,\Delta_y\bar u^r)\cdot\Delta_y v(s)-
\sum_{x\in[d]}m_x(s)D^\eta_1 F(x,\bar\mu^r)\cdot m(s)\\
&\qquad\qquad\qquad\quad+v(s)\cdot B(s)\Big]ds\\
&\quad
=
-\int_{t_1}^{t_2}e^{-rs}\Big[\sum_{y\in[d]}\bar\mu^r_y(\Delta_y v(s))^\top\Gamma(y)(\Delta_y v(s))
+
\sum_{x\in[d]}m_x(s)D^\eta_1 F(x,\bar\mu^r)\cdot m(s)-v(s)\cdot B(s)\Big]ds\\
&\quad\le 
-C_{2,H}\int_{t_1}^{t_2}e^{-rs}\sum_{y\in[d]}|\Delta_y v(s)|^2ds
+\int_{t_1}^{t_2}e^{-rs}v(s)\cdot B(s)ds,
\end{align*} where $\Gamma_{xz}(y)=-\partial_{p_z}\gamma^*_x(y,\Delta_y \bar u^r)
=-\partial^2_{p_x,p_z}H(y,\Delta_y\bar u^r).$
The second and third equalities use the identity 
$\sum_{x\in[d]}\gamma^*_x(y,\Delta_y \bar u^r)=0$. By \eqref{monotone_2}, we know that $\sum_{x\in [d]} p_x D^\eta_1 F(x,\bar\mu^r)\cdot p\geq 0$ for any $p\in\calM_0$. The inequality above follows from this fact and since $\{\bar\mu_y^r\}_{y\in[d]}$ are bounded away from $0$. Finally, the bound in the statement of the lemma follows from the fact that $B(s)\in\calM_0$, by Young's inequality applied to $(v(s)-\langle v(s)\rangle)\cdot B(s)$, and by Lemma \ref{lem:b}.
\end{proof}

\begin{proposition}\label{prop:lin_MFG_r_exi}
Assume that there exist $C,\theta >0$ such that $|A(t)| +|B(t)| \leq C  e^{-\theta t}$. Then there exists $r_0>0$ such that for any $r\in (0,r_0)$, the system \eqref{lin_MFG_r_around_stat} admits a unique solution and for any $t\in\R_+$, $m(t)\in\calM_0$. 
\end{proposition}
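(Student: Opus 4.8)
My plan is to treat the three assertions separately, in increasing order of difficulty: uniqueness, the invariance $m(t)\in\calM_0$, and existence, the last obtained by a finite-horizon approximation whose uniform-in-horizon estimates are powered by Lemma~\ref{lem:duality}. For uniqueness, if $(v^1,m^1)$ and $(v^2,m^2)$ both solve \eqref{lin_MFG_r_around_stat}, then $(v,m):=(v^1-v^2,m^1-m^2)$ solves the homogeneous system ($A\equiv B\equiv 0$, $m(0)=0$) and $v$ is bounded. The $m$-equation then has forcing $\sum_y\bar\mu^r_y\nabla_p\gamma^*_x(y,\Delta_y\bar u^r)\cdot\Delta_y v$, which is bounded because $v$ (hence $\Delta v$) is, so Lemma~\ref{lem:M0} gives $\sup_t|m(t)|<\infty$. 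Apply Lemma~\ref{lem:duality} on $[0,T]$ (with $A\equiv0$): the left endpoint vanishes because $m(0)=0$ and the right endpoint is $\le Ce^{-rT}\to0$, so $\int_0^\infty e^{-rs}|\Deltadot v(s)|^2\,ds\le0$ and hence $\Delta v\equiv0$. With $\Delta v\equiv 0$ the $m$-equation is a homogeneous linear ODE with $m(0)=0$, so $m\equiv0$; then $v=k(t)\vec1$ with $\dot k=rk$ (since $\Delta_x\vec1=0$), and boundedness of $k$ forces $k\equiv0$, i.e. $(v,m)\equiv(0,0)$.

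The invariance $m(t)\in\calM_0$ follows by summing the $m$-equation over $x$: one has $\sum_x\gamma^*_x(y,\Delta_y\bar u^r)=0$, $\sum_x\nabla_p\gamma^*_x(y,\Delta_y\bar u^r)=\nabla_p\big(\sum_x\gamma^*_x(y,\cdot)\big)=0$, and $\sum_x B_x(t)=0$ since $B(t)\in\calM_0$, so $\frac{d}{dt}\sum_x m_x(t)=0$, and $\sum_x m_{0,x}=0$ closes the point. For existence I would approximate by the finite-horizon linearized system on $[0,T]$ with $m(0)=m_0$ and terminal data $v(T)=0$; it is well-posed by the arguments of \cite{bay-coh2019} (a linear forward-backward ODE, uniqueness again from Lemma~\ref{lem:duality}), and the summation above gives $m^T(t)\in\calM_0$. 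Call the solution $(v^T,m^T)$.

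The heart of the existence proof is a bound on $(v^T,m^T)$ uniform in $T$. I would combine: (a) Lemma~\ref{lem:duality} on $[0,T]$ in the form allowing $A\not\equiv0$ (its proof produces the extra term $-\int_0^T e^{-rs}m^T\cdot A\,ds$, and $m^T\in\calM_0$ lets one replace $A$ by $A-\langle A\rangle$); (b) the backward representation $v^T(t)=\int_t^T e^{-(rI-Q_r)(s-t)}g^T(s)\,ds$, where $Q_r$ is the generator with off-diagonal entries $\gamma^*_y(x,\Delta_x\bar u^r)$ and $g^T(s)=D^\eta_1 F(\cdot,\bar\mu^r)\cdot m^T(s)+A(s)$, so that $e^{-(rI-Q_r)(s-t)}$ is controlled by $Ce^{-r(s-t)}$ for $s\ge t$ since the spectrum of $rI-Q_r$ lies in $\{\mathrm{Re}\ge r\}$; (c) the lower bound $\bar\mu^r_x\ge C^{-1}$ of Proposition~\ref{prop:statDisc_erg_mfg}(i) and the monotonicity relation \eqref{monotone_2} (both used, as in Lemma~\ref{lem:duality}, to extract the $-C_{2,H}|\Deltadot v|^2$ term); and (d) the geometric-ergodicity estimates of Lemmas~\ref{lem:M0} and \ref{lem:v} together with the hypothesis $|A|+|B|\le Ce^{-\theta t}$ and a Young-type absorption, to pass from control of $\int_0^T e^{-rs}|\Deltadot v^T|^2\,ds$ to control of $|v^T(t)|$, $|\Deltadot v^T(t)|$ and $|m^T(t)|$ on bounded time intervals, uniformly in $T$. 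The ODEs then give equicontinuity on bounded intervals, so an Arzel\`a--Ascoli/diagonal argument extracts $(v^{T_k},m^{T_k})\to(v,m)$ locally uniformly on $\R_+$, and passing to the limit in the equations shows $(v,m)$ solves \eqref{lin_MFG_r_around_stat} with $m(0)=m_0$; passing to the limit in the energy estimate and in the backward representation (whose terminal contribution stays $0$) confirms that $v$ is bounded, so $(v,m)$ is a solution in the required sense.

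I expect the main obstacle to be exactly this uniform-in-$T$ estimate. The linearized coupling between $v$ and $m$ carries no smallness, so a naive Gr\"onwall/contraction loop between Lemmas~\ref{lem:M0} and \ref{lem:v} need not close, and one genuinely has to exploit the sign structure of Lemma~\ref{lem:duality} (Lasry--Lions monotonicity of $F$ via \eqref{monotone_2} and strict concavity of $H$ via $C_{2,H}$). The subtlety is that the energy estimate from Lemma~\ref{lem:duality} is weighted by $e^{-rs}$, which degrades as $s\to\infty$; I would handle this either by running the duality estimate on rolling windows $[t,t+\ell]$ and chaining the local inequalities against the exponential decay supplied by Lemmas~\ref{lem:M0}--\ref{lem:v} and by the data, or by first extracting a crude sub-exponential a priori bound on $[0,T]$ (uniform in $T$) and then upgrading it. This quantitative work is essentially the content of the forthcoming Lemma~\ref{lem:lin_MFG_r_exp_A_B}, so in the present proof I would extract only as much of it as is needed to reach the compactness argument.
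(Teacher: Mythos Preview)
Your uniqueness argument and the $m(t)\in\calM_0$ invariance are correct and in fact cleaner than the paper's treatment (the paper defers uniqueness to the subsequent Lemma~\ref{lem:lin_MFG_r_exp}). Your overall existence strategy---finite-horizon approximation $(v^T,m^T)$ with $v^T(T)=0$, uniform estimates, then a limit---is exactly the paper's. Using Arzel\`a--Ascoli rather than a Cauchy argument is a harmless variation.

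The gap is at the step you yourself flag as the main obstacle, and it is sharper than you indicate. The duality/transport combination of Lemmas~\ref{lem:M0}, \ref{lem:v}, \ref{lem:duality} delivers only the crude growth bound
\[
|m^T(t)|+|\Deltadot v^T(t)|\le C\,e^{rt/2}\,(|m_0|+C_A+C_B),
\]
uniformly in $T$ (this is the paper's \eqref{bound:lin_A_B}). That suffices for compactness on bounded intervals, but it does \emph{not} yield boundedness of the limit $v$: feeding $e^{rt/2}$ growth of $m$ and $\Deltadot v$ into your backward representation gives $|v(t)|\lesssim r^{-1}e^{rt/2}$, which is unbounded. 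So your sentence ``passing to the limit in the backward representation \dots\ confirms that $v$ is bounded'' does not go through without a genuinely stronger estimate, namely exponential \emph{decay} of $m^T$ and $\Deltadot v^T$, uniformly in $T$.

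The paper spends essentially the entire proof on exactly this. It first treats $A\equiv B\equiv 0$ via the auxiliary quantity $\rho^r(t)=\sup_{T,|m_0|\le1}e^{-rt}m^{(T)}(t)\cdot v^{(T)}(t)$, showing by a compactness-and-contradiction argument that $\rho^r(t)\le Ce^{-\gamma t}$ uniformly for small $r$; it then bootstraps to general exponentially decaying $A,B$ via a second auxiliary function $\bar\rho^r$. This is precisely a finite-horizon version of Lemmas~\ref{lem:lin_MFG_r_exp}--\ref{lem:lin_MFG_r_exp_A_B} derived from scratch \emph{inside} the proof of Proposition~\ref{prop:lin_MFG_r_exi}, so a forward citation to Lemma~\ref{lem:lin_MFG_r_exp_A_B} would be circular. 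Your ``rolling windows'' idea is in the right spirit---the $\rho^r$ device is a rolling-window argument---but the execution (the double-limit $\rho_\infty=0$ step, then the geometric contraction $\rho^r(t+\tau)\le\tfrac14\rho^r(t)$) is the substance of the proof, not an afterthought.
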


\begin{proof} The existence follows in two steps. The first step is to show that the forward-backward system \eqref{lin_MFG_r_around_stat} but with finite horizon $[0,T]$ with the terminal condition $v(T)=0$ has a unique solution. For the second step, we send $T\to\infty$ and establish existence and uniqueness for the infinite horizon system. 

The first part follows by the same arguments given in \cite[Proposition 3.1]{bay-coh2019}. Notice that due to the discount factor, we have an extra term in the ODE of $v$ as compared with the one in \cite{bay-coh2019}. Since this additional term  appears linearly, the proof of the existence and uniqueness on any finite horizon interval still follows the same lines. 

Denote by $(\vT,\mT)$ the solution on the finite horizon $[0,T]$ with the terminal condition $v^{(T)}(T)=0$. We show that the sequence $\{(\vT,\mT)\}_{T\in\N}$ is a Cauchy sequence under the sup-norm, on any fixed interval $[0,T_0]$, $T_0>0$. Throughout the proof, $C$ and $c$ are generic positive constants, which may change from one line to the next and are independent of $r$, $m_0$, $T$ and $\tilde T$.

Since $A$ and $B$ are assumed to have exponential decay, we will denote the constants $C_A$, $C_B>0$ as those that result from the bound:
\begin{align}\label{cab}
    \int_0^\iy |A(t)| dt \leq  \int_0^\iy Ce^{-\theta t} \leq C_A,
\end{align} and similarly for $C_B$. At points throughout the proof we may temporarily assume that $A$ or $B$ is zero; if so, $C_A$ or $C_B$ will be set to zero in the citation of any bound derived in the more general, nonzero case. Note that $C_A$ and $C_B$ are independent of $r$, $T$, $\tilde T$, and $m_0$.

Fix arbitrary $\TT>T>0$. 
By Lemmas \ref{lem:M0} and \ref{lem:duality}, for any $t\in[0,T]$,
\begin{align}\notag
|(\mT-\mTT)(t)|
&\le 
Ce^{-ct}|(\mT-\mTT)(0)|+C\Big(\int_0^t|\Deltadot (\vT-\vTT)(s)|^2ds\Big)^{1/2}\\\notag
&\le 
Ce^{rt/2}\Big(\int_0^te^{-rs}|\Deltadot (\vT-\vTT)(s)|^2ds\Big)^{1/2}\\\notag
&\le 
Ce^{rt/2}\Big(\int_0^T e^{-rs}|\Deltadot (\vT-\vTT)(s)|^2ds\Big)^{1/2}\\\notag
&\le 
Ce^{-r(T-t)/2}|\Delta (\vT-\vTT)(T)|^{1/2}|(\mT-\mTT)(T)|^{1/2},
\end{align}
where we used multiple times the identity $\mT(0)=\mTT(0)$. Together with $\vT(T)=0$, and plugging in the same computation for when $t=T$, we obtain:
\begin{align}\label{bound_dif_m}
\sup_{t\in [0,T]} e^{-rt/2}|(\mT-\mTT)(t)|
&\le 
Ce^{-rT/2}|\Delta \vTT(T)|.
\end{align}
From Lemma \ref{lem:v}, \eqref{bound_dif_m}, and since $\vT(T)=0$, for any $t\in[0,T]$,
\begin{align*}\notag
e^{-rt}|\Delta( \vTT-\vT)(t)| &\le Ce^{-rt}e^{-c(T-t)}|\Delta \vTT (T)| + C \int_t^T e^{-c(s-t)} e^{-rs} |(\mTT - \mT)(s)| ds \\
&\leq Ce^{-rt}e^{-c(T-t)}|\Delta \vTT (T)| + C \int_t^T e^{-c(s-t)} e^{-rs/2} e^{-rT/2}|\Delta \vTT (T)| ds \\\notag
&\leq Ce^{-rt}e^{-c(T-t)}|\Delta \vTT (T)| + Ce^{-r(T+t)/2}|\Delta \vTT (T)|.
\end{align*}
Multiplying both sides by $e^{rt}$ and then for all $0<r<c$:
\begin{align}\label{bound_dif_v}
|\Delta (\vTT-\vT)(t)|
&\le Ce^{-r(T-t)/2}|\Delta \vTT(T)|.
\end{align} 

To develop the bound from \eqref{bound_dif_v}, we start with estimating $\mTT(t)$ for $t\in[0,\TT]$. Using Lemmas \ref{lem:M0} and \ref{lem:duality}, \eqref{cab}, and $\vTT(\tilde T)=0$:
\begin{align}\label{bound_m_TT}
\begin{split}
|\mTT(t)|
&\le 
Ce^{-ct}|m_0|+C \Big(\int_0^t|\Deltadot \vTT(s)|^2ds\Big)^{1/2}+C_B\\
&\le 
Ce^{-ct}|m_0|+Ce^{rt/2}\Big(\int_0^te^{-rs}|\Deltadot \vTT(s)|^2ds\Big)^{1/2}+C_B\\
&\le 
Ce^{-ct}|m_0|+Ce^{rt/2}\Big(\int_0^{\tilde T} e^{-rs}|\Deltadot \vTT(s)|^2ds\Big)^{1/2}+C_B\\
&\le 
Ce^{-ct}|m_0|+Ce^{rt/2}|m_0|^{1/2}|\Delta\vTT(0)|^{1/2}+C_B.
\end{split}
\end{align}
Using Lemma \ref{lem:v}, $\vTT(\tilde T)=0$, the last bound \eqref{bound_m_TT}, and the fact that $|A(t)|\leq Ce^{-\theta t}$, we get:
\begin{align}\label{bound_v_TT}
\begin{split}
e^{-rt}|\Delta \vTT(t)|
&\le 
C\int_t^{\TT}e^{-c(s-t)}e^{-rs}\left(|\mTT(s)|+ A(s)\right)ds\\
&\le 
C\int_t^{\TT}e^{-c(s-t)}e^{-rs}
\Big(e^{-cs}|m_0|+e^{rs/2}|m_0|^{1/2}|\Delta \vTT(0)|^{1/2}+A(s)+ C_B
\Big)ds\\
&\le 
C \Big(e^{-rt}(|m_0|+C_A + C_B)+e^{-rt/2}|m_0|^{1/2}|\Delta \vTT(0)|^{1/2}\Big).
\end{split}
\end{align}
Specifically, by taking $t=0$,
\begin{align}\notag
|\Delta \vTT(0)|
&\le 
C\Big(|m_0|^{1/2}|\Delta \vTT(0)|^{1/2}+(|m_0|+C_A+C_B)\Big).
\end{align} Applying Young's inequality and rearranging,  we get that
\begin{align}\notag
|\Delta \vTT(0)|
&\le 
C(|m_0|+C_A+C_B).
\end{align}
Plugging this back in \eqref{bound_m_TT} and \eqref{bound_v_TT}, we get for any $t\in[0,\TT]$, 
\begin{align}\label{bound:lin_A_B}
\begin{split}
|\mTT(t)|
&\le 
Ce^{rt/2}(|m_0|+C_A+C_B)
\\
|\Delta \vTT(t)|&\le C e^{rt/2}(|m_0|+C_A+C_B).
\end{split}
\end{align}

Assume momentarily that $A\equiv B\equiv 0$. We will temporarily use this fact to improve \eqref{bound:lin_A_B} to exponential decay. 

{\bf Improving \eqref{bound:lin_A_B} to exponential decay - the case $A\equiv B\equiv 0$:} We will relax this assumption in the sequel. For every $r$, set the function, 
\begin{align}\notag
\rho^r(t):=\sup_{T, m_0} e^{-rt}m^{(T)} (t)\cdot v^{(T)} (t),\qquad t\in\R_+ ,  
\end{align} 
where the supremum is taken over all $T\geq t$ and $|m_0| \leq 1$ with $m_0 \in \calM_0$, and where $(v^{(T)},m^{(T)})$ solves the finite horizon linearized system with the initial condition $m(0)=m_0$ and the terminal condition $v^{(T)}(T)=0$. In what follows, we use multiple times the fact that for any $m_0\in \calM_0$ and any solution $(v^{(T)},m^{(T)})$ to \eqref{lin_MFG_r_around_stat}, one has for $t\in [0,T]$:
\begin{equation}\label{star_T}
    0\le e^{-rt} m^{(T)}(t) \cdot v^{(T)}(t) \le |m_0|^2 \rho^r(t).
\end{equation} The first inequality follows from Lemma \ref{lem:duality} and since $v^{(T)}(T)=0$.
The second inequality follows by the linearity of $(v^{(T)},m^{(T)})$ with respect to $m_0$ and the fact that $m^{(T)}(t)\cdot v^{(T)}(t)$ is homogeneous of degree $2$.

For any given $r$, the function $\rho^r(\cdot)$ is non-increasing by Lemma \ref{lem:duality} and since the interval on which we take the supremum in the definition of $\rho^r$, shrinks as $t$ increases. From \eqref{bound:lin_A_B} and \eqref{star_T}, for any $r_0$, $\{\rho^r(t)\}_{r\in(0,r_0), t \in \R_+}$ is bounded and nonnegative.  Set,
\begin{align}\notag
\rho(t):=\limsup_{r\to 0^+}\rho^r(t),\qquad t \in \R_+.
\end{align} This function inherits the non-negativity and non-increasing properties. We upgrade \eqref{bound:lin_A_B} to (a uniform) exponential decay from exponential growth (in $r$) by the following steps: 
\begin{enumerate}[(a)]
\item showing that $\rho_{\iy}:=\lim_{t\to\iy}\rho(t)=0$;
\item showing that there exist $\gamma, C_1, \bar r_0>0$ independent of $m_0$ such that for any $r\in(0,\bar r_0)$, for any $t \in \R_+$, 
\begin{align}\label{rho_exp_bound_T}
\rho^r(t)\le C_1 e^{-\gamma t};
\end{align} 
\item establishing the exponential bound.
\end{enumerate}

We start with proving (a). By the definition of $\rho_\iy$ as a double limit, let $(T_n, t_n, r_n)_{n\in\N}$ be an array of sequences such that $t_n\to\iy$, $T_n \geq t_n$, and $r_n \searrow 0$, such that for every $m^n_0\in\calM_0$ satisfying $|m^n_0|\le 1$, one has,
\begin{align}\notag
e^{-r_nt_n} m^n(t_n)\cdot v^n(t_n)\ge \rho_\iy -\frac{1}{n},
\end{align}
where $(v^n,m^n)$ are associated with $m^n_0$ and the horizon $T_n$.  Define the functions 
\begin{align}\notag
\tilde v^n(s): = e^{-r_nt_n/2}(v^n(t_n+s)-\langle v^n(t_n)\rangle),\qquad 
\tilde m^n(s):=e^{-r_nt_n/2}m^n(t_n+s),\qquad s\in[-t_n,0].
\end{align}
From \eqref{bound:lin_A_B}, the functions $\tilde m^n$ and $ \tilde v^n$ are locally bounded. Hence, along a subsequence, which we relabel by $\{n\}$, we have the locally-uniform convergence $(\tilde v^n,\tilde m^n)\to (\tilde v, \tilde m)$, where  $(\tilde v, \tilde m)$ satisfies for $t\in (-\iy,0]$ and $x\in[d]$,
\begin{align}\notag
\begin{cases}
-\frac{d}{dt}\tilde v_x(t)= \gamma^*(x,\Delta_x \bar u^r)\cdot \Delta_x\tilde v(t) + D^\eta_1 F(x,\bar\mu^r)\cdot \tilde m(t),\\
\frac{d}{dt}\tilde m_x(t)=\sum_{y\in[d]} \tilde m_y(t)\gamma^*_x(y,\Delta_y \bar u^r) + \sum_{y\in[d]}\bar \mu^r_y \nabla_p\gamma^*_x(y,\Delta_y \bar u^r)\cdot \Delta_y \tilde v(t).
\end{cases}
\end{align} Keep in mind that $m^n(s)\in\calM_0$, hence $\langle v^n(s)\rangle\cdot m^n(s)=0$. Recalling the definition of $\rho^r$, it follows that for any $s\le 0$ and $t_n \ge 0$, and large enough $n$,
\begin{align}\notag
e^{-r_ns}\tilde m^n(s)\cdot \tilde v^n(s)=e^{-r_n(t_n+s)} m^n(t_n+s)\cdot  v^n(t_n+s)\le \rho^{r_n}(t_n+s)\le \rho^{r_n}(t_n).
\end{align} Taking $n\to\iy$, we get for any $s\le 0$,
$$
\tilde m(s)\cdot\tilde v(s)\le \rho_\iy=\tilde m(0)\cdot\tilde v(0),
$$
where the equality follows by substituting $s=0$, taking the limit 
as $n\to\iy$, and recalling the definition of $(v^n, m^n)$. By Lemma \ref{lem:duality} applied to $(\tilde m,\tilde v)$, $\tilde m(s)\cdot\tilde v(s)$ is non-increasing with $s$. Together with the last bound, it follows that $
\tilde m(s)\cdot\tilde v(s)$ is constant, and so equals $\rho_\iy$. By duality again, it follows that $\Deltadot \tilde v(s)=0$ for any $s\le 0$, which together with $m(s)\in\calM_0$ implies that $\rho_\iy=\tilde m(s) \cdot \tilde v(s) = \tilde m(s) \cdot \Delta \tilde v(s) =0$.

We now turn to establishing (b). From part (a) we have that for any $\eps>0$, there are $\tau_0>0$ and $\bar r_0>0$ such that for any $t>\tau_0$ and $r\in(0,\bar r_0)$, $\rho^r(t) \leq \eps$. Combining it with the duality lemma and \eqref{star_T}, we get that there exists $C_2>0$, independent of $m_0$, such that for any $T >t_2 > t_1 >\tau_0$ and $r\in (0,\bar r_0)$ and finite horizon solutions $(v^{(T)},m^{(T)})$ to \eqref{lin_MFG_r_around_stat} (associated with $r$):
\begin{align}\notag
\int_{t_1}^{t_2}e^{-rs}|\Deltadot v^{(T)} (s)|^2ds\le C_2 \eps|m_0|^2. 
\end{align} Re-estimating $m$ on $[\tau_0,\tau_0 + t]$ with $\tau_0 + t \leq T$, using Lemma \ref{lem:M0}, \eqref{bound:lin_A_B}, and the above, we get 
\begin{align*}
|m^{(T)}(\tau_0+t)|
&\le Ce^{-c t}|m^{(T)}(\tau_0)|+C\Big(\int_{\tau_0}^{\tau_0+t}|\Deltadot v^{(T)}(s)|^2ds\Big)^{1/2}\\\notag
&\le Ce^{-ct+r\tau_0/2}|m_0|+Ce^{r(\tau_0+t)/2}\Big(\int_{\tau_0}^{\tau_0+t}e^{-rs}|\Deltadot v^{(T)}(s)|^2ds\Big)^{1/2}\\\notag
&\le CC_2 e^{r(\tau_0+t)/2}|m_0|\Big(e^{-(c+r/2)t}+\eps^{1/2}\Big).
\end{align*}
Take $t$ such that $CC_2 e^{-ct}\le 1/4$, recalling that $C, C_2$ are independent of $\eps$ and $r$. If necessary, we can take $T>0$ larger enough to accommodate this choice. Also, take $\eps$ such that $CC_2 \eps^{1/2}\le 1/4$. Denote $\tilde \tau:= \tau_0+t$, so we have
\begin{align}\label{bound_m_tau_T}
|m^{(T)}(\tilde \tau)|\le \frac{1}{2}|m_0|e^{r\tilde \tau /2}. 
\end{align}
Define, $(v^{\tilde \tau}(s), m^{\tilde \tau}(s)) := (v^{(T)}(\tilde \tau+s),m^{(T)}(\tilde \tau+s))$, $s\in [0, T-\tilde \tau]$. This is a solution to \eqref{lin_MFG_r_around_stat} with the initial condition $m^{\tilde \tau}(0)=m^{(T)}(\tilde \tau)$ on the time interval $[0,T-\tilde\tau]$, with the terminal condition $m^{\tilde\tau}(T-\tilde\tau)=0$. By \eqref{star_T}:
\begin{align*}
e^{-rs} m^{\tilde \tau} (s)\cdot  v^{\tilde \tau} (s)\le |m^{(T)}(\tilde \tau)|^2\rho^r(s),\qquad s\in[0,T-\tilde\tau].
\end{align*} Therefore, by \eqref{bound_m_tau_T},
\begin{align}\notag
e^{-r(s+\tilde \tau)}m^{(T)}(s+\tilde \tau)\cdot v^{(T)}(s+\tilde \tau)\le e^{-r\tilde \tau}|m^{(T)}(\tilde \tau)|^2\rho^r(s) \le \frac{1}{4} |m_0|^2 \rho^r(s).
\end{align}
Taking supremum over $\{m_0\in\calM_0 : |m_0|\le 1\}$ and $T\geq s+\tilde \tau$ on both sides, we get, 
\begin{align}\notag
\rho^r(s+\tilde \tau)\le \frac{1}{4}\rho^r(s).
\end{align} Since this can be established for any $\tilde \tau$ large enough (independently of $\eps$ and $r$), we obtain that (b) holds.

Finally, we turn to proving (c). We use Lemma \ref{lem:M0} on $[t/2,t]$, Lemma \ref{lem:duality}, and \eqref{rho_exp_bound_T}, to obtain that for any $t\in [0,T]$,
\begin{align*}
|m^{(T)}(t)|
&\le 
Ce^{-ct/2}|m^{(T)}(t/2)|+C\big(\int_{t/2}^t|\Deltadot v^{(T)} (s)|^2ds\Big)^{1/2}\\
&\le
Ce^{-ct/2+rt/2}|m_0|+Ce^{rt/4}\big(\int_{t/2}^t e^{-rs}|\Deltadot v^{(T)}(s)|^2ds\Big)^{1/2}\\
&\le 
CC_1 |m_0|\Big(e^{-ct/2+rt/2}+e^{rt/4-\gamma t/2}\Big).
\end{align*}
Take $r_0$ sufficiently small, such that for any $r\in(0,r_0)$, we have
\begin{align}\notag
|m^{(T)} (t)|\le C|m_0|e^{-\lambda t},\qquad t\in[0,T],
\end{align}
for some $\lambda\in(0,c)$. Modifying $\lambda$ if necessary, by an application  of \eqref{bound:lin_A_B} on the interval $[t/2,t]$, one gets,
\begin{align}\notag
|\Deltadot v^{(T)}(t)|\le C|m^{(T)}(t/2)|e^{rt/4}\le C|m_0|e^{-\lambda t}.
\end{align} Altogether, we have:
\begin{align}\label{eqn:fin_hor_vm_dec}
    |\Delta v^{(T)} (t)| + |m^{(T)}(t)| \leq C|m_0| e^{-\lambda t}
\end{align} for $t\in [0,T]$.

{\bf Improving \eqref{bound:lin_A_B} to exponential decay - general $A$ and $B$:}  Note that the bound \eqref{eqn:fin_hor_vm_dec} was established for $A\equiv B\equiv 0$. Yet, this kind of decay would be useful in proving this proposition; namely, by improving \eqref{bound_dif_m} and \eqref{bound_dif_v} with decay in $T$ and then sending $T,\tilde T\to\iy$. Therefore, we turn toward establishing \eqref{eqn:fin_hor_vm_dec} for possibly nonzero $A$ and $B$ with $|A(t)| + |B(t)| \leq Ce^{-\theta t}$ as in the statement of the proposition. We start by showing that one can reduce the problem to one with $A\equiv 0$. To this end, let $(v^A,m^A)$ be the unique solution to \eqref{lin_MFG_r_around_stat} with horizon $T>0$ and with some $A$ and $B$ as given in the proposition. Also, let $\check v$ be the unique (bounded) solution to:
\begin{align}\notag
-\frac{d}{dt}\check v_x(t)=-r\check v_x(t) + \gamma^*(x,\Delta_x \bar u^r)\cdot \Delta_x \check v(t) +A_x(t), \qquad \forall t\in [0,T],
\end{align} with $\check v(T) =0$. By Lemma \ref{lem:v}, there exist $c_3,C_3>0$ depending only on $\mathfrak{a}_l$ and $\mathfrak{a}_u$, such that:
\begin{align}\label{bound_check_v_T}
|\Deltadot \check v (t)|\le C_3\int_t^T e^{-(c_3+r)(s-t)}|A(s)|ds\le 
\frac{CC_3}{c_3 + \theta +r}e^{-\theta t}\leq Ce^{-\theta t},   \qquad \forall t\in[0,T],
\end{align} where we again used \eqref{cab}.
Now, $(v^A-\check v,m^A)$ solves \eqref{lin_MFG_r_around_stat} with $A\equiv 0$ and $\check B_x(t)=B_x(t)+\sum_{y\in[d]}\bar\mu^r_y\nabla_p\gamma^*_x(y,\Delta_y \bar u^r)\cdot\Delta_y \check v(t)$.
Trivially, $\check B(t)\in\calM_0$, recall from Proposition \ref{prop:statDisc_erg_mfg} that $\Delta_x \bar u^r$ are uniformly bounded for $r\in(0,r_0)$, and by assumption the second derivative of $H$ is bounded on any compact set and equals $\nabla_p \gamma^*$. Therefore by updating $C$ from \eqref{cab}, 
\begin{align}\label{eqn:checkB0_T}
|\check B(t)| \le C e^{-\theta t}. 
\end{align}

For any $b>0$, define the set of measurable functions: 
$$\calB_b:=\{B:\R_+\to\calM_0 \;|\;\forall t\in [0,T],\; | B(t)| \le  b e^{-\theta t}\}. $$
We prove that for any $C_4>0$ there exists $C_5>0$ independent of $C_4,r,m_0$, and $T$,  such that 
\begin{align}\label{bound_v_m_T}
\sup_{B\in\calB_{C_4}}\left\{|m^{(T)}(t)|+|\Deltadot v^{(T)}(t)|\right\}\le C_5C_4 e^{-(\lambda\wedge \gamma) t},\quad t\in [0,T], 
\end{align}
where $(v^{(T)},m^{(T)})$ solves \eqref{lin_MFG_r_around_stat} with $A\equiv 0$ and some $B\in\calB_{C_4}$.
Once \eqref{bound_v_m_T} is established, by recalling that $(v^A-\check v,m^A)$ solves \eqref{lin_MFG_r_around_stat} with $A\equiv 0$, then from \eqref{eqn:checkB0_T} and from \eqref{bound_v_m_T}, we obtain for all $t\in [0,T]$ that:
\begin{align*}
|m^A(t)|+|\Deltadot (v^A-\check v)(t)|\le C_5 C_4 e^{-(\lambda\wedge \gamma) t}.
\end{align*}
Together with \eqref{bound_check_v_T},  we obtain that:
\begin{align*}
|m^A(t)|+|\Deltadot v^A(t)|\le 2C_5 C_4 e^{-(\lambda\wedge \gamma) t},\qquad\forall t\in [0,T].
\end{align*}

Next, we establish \eqref{bound_v_m_T} (which by definition considers $A\equiv 0)$. Exploiting the linearity of the system with respect to $m_0$ and using \eqref{eqn:fin_hor_vm_dec}, we may establish \eqref{bound_v_m_T} for the case $m_0=0$. Using \eqref{bound:lin_A_B}, we use the fact that $B\in\calB_{C_4}$ to obtain that, for all $t\in [0,T]$:
\begin{align}\label{eq:mv_rt2_T}
    |m^{(T)}(t)|+|\Delta v^{(T)}(t)|\le C C_4 e^{rt/2}.
\end{align}

For any $b>0$ and for $t\in [0,T]$, set:
\[
\bar\rho^r(b,t) := b^{-1}\sup_{B\in\calB_b} e^{-rt} (|m^{(T)}(t)| + |\Delta v^{(T)}(t)|).
\] where $(v^{(T)},m^{(T)})$ solves \eqref{lin_MFG_r_around_stat} on the horizon $T$ with $A\equiv 0$, $m(0)=0$, and some $B\in\calB_{b}$. 

The structure of \eqref{lin_MFG_r_around_stat} implies that if $(\tilde v, \tilde m)$ is a solution with a given $B$, then $(\ell \tilde v, \ell \tilde m)$ is the solution associated to $\ell B$ for any $\ell \in\R$. Therefore, for any $b>0$, 
\begin{align}\label{eq:rho_c_T}
    \bar\rho^r(t):=\bar\rho^r(1,t)
    =\bar\rho^r(b,t),\qquad t\in [0,T].
\end{align} Fix $\tau >0$ and let $(v^{\tau,1},m^{\tau,1})$ be the solution of \eqref{lin_MFG_r_around_stat} on $[\tau,T]$ with $A\equiv B\equiv 0$ and $m^{\tau,1} (\tau) = m^{(T)}(\tau)$. Also let $(v^{\tau,0},m^{
\tau,0})$ be the solution to \eqref{lin_MFG_r_around_stat} on $[\tau,T]$ with $A\equiv 0$, $m^{\tau,0}(\tau)=0$, and the given $B$. Then on the time interval $[\tau,T]$, $(v,m)=(v^{\tau,1},m^{\tau,1})+(v^{\tau,0},m^{\tau,0})$. 

Note that $(\hat v^{\tau,0}(\cdot),\hat m^{\tau,0}(\cdot)):=(v^{\tau,0}(\tau+\cdot),m^{\tau,0}(\tau+\cdot))$ solves \eqref{lin_MFG_r_around_stat} on $[0,T-\tau]$. By the assumption on $B$, $B(\tau + t) \le C_4 e^{-\gamma t} e^{-\gamma \tau}$. Together with the definition of $\bar\rho^r(b,t)$ and the aforementioned linear property, we have
\[
C_4^{-1}e^{-rt} (|m^{\tau,0} (\tau + t)| + |\Delta v^{\tau ,0 }(\tau +t)|) = e^{-\gamma \tau}(C_4 e^{-\gamma\tau})^{-1}e^{-rt} (|\hat m^{\tau,0} ( t)| + |\Delta \hat v^{\tau ,0 }( t)|) \leq e^{-\gamma \tau}\bar\rho^r( t).
\] 
On the other hand, \eqref{eqn:fin_hor_vm_dec} and \eqref{eq:mv_rt2_T} imply that there exist $\lambda, C_6>0$, independent of $C_4,m_0$, such that for all $r\in (0,r_0)$ and for all $t\in [0,T-\tau]$,
\[
|m^{\tau,1} (\tau +t)| + |\Delta v^{\tau,1} (\tau+t)| \le C_6 e^{-\lambda t} |m(\tau)| \le C_6 C_4 e^{-\lambda t} e^{r\tau /2}. 
\] 
As a consequence,
\begin{align}\notag
C_4^{-1}e^{-r(t+\tau)}\left(|m(\tau+t)|+|\Deltadot v (\tau+t)|\right)
\le 
C_6 e^{-(\lambda+r)t}e^{-r\tau/2}+ e^{-(\gamma+r)\tau}\bar\rho^r(t).
\end{align}
Taking the supremum over all $B \in \calB_{C_4}$, one gets
\begin{align}\notag
\bar\rho^r(\tau+t)\le C_6 e^{-(\lambda+r)t}+e^{-(\gamma+r)\tau}\bar\rho^r(t).
\end{align} Multiply both sides by $e^{((\lambda\wedge\gamma) +r)(t+\tau)}$, one gets,
\begin{align}\notag
e^{((\lambda\wedge\gamma)+r)(t+\tau)}\bar\rho^r(\tau+t)\le C_6 e^{((\lambda\wedge\gamma)+r)\tau}+e^{((\lambda\wedge\gamma)+r)t}\bar\rho^r(t),
\end{align}
which, together with \eqref{eq:rho_c_T} implies the exponential decay in \eqref{bound_v_m_T}. Indeed, set $\theta:=\lambda\wedge \gamma$ and $\beta(t):=e^{(\theta+r)t}\bar \rho^r(t)$. Then, $\beta(t+\tau)\le Ce^{(\theta +r)\tau}+\beta(t)$. First, take $t$ to be a non-negative integer and $\tau=1$, then,
$$\beta(t)\le Cte^{\theta +r}+\beta(0)
.$$
Then, take $\tau\in(0,1)$ and:
$$
\beta(\tau+t)\le Ce^{(\theta+r)\tau}+ Cte^{\theta +r}+\beta(0)\le  C(t+1)e^{\theta +r}+\beta(0).
$$
So in general, for any $s\in[0,T]$,
$$\beta(s)\le C(s+1)e^{\theta+r}+\beta(0).
$$ Now, recall the definition of $\beta$ to get the desired decay. That is, we have improved the bound \eqref{eqn:fin_hor_vm_dec} for solutions to \eqref{lin_MFG_r_around_stat} on $[0,T]$ and with $v^{(T)}(T)=0$ so that:
\begin{align}\label{eqn:fin_hor_vm_dec_2}
    |\Delta v^{(T)}(t)| + |m^{(T)}(t)| \leq C (1+|m_0|+t) e^{-\lambda t}, \quad t\in [0,T], \quad A,B \in \calB,
\end{align} where $\calB := \bigcup_{b\in\N} \calB_b$.

{\bf Finishing the proof using the exponential decay from \eqref{eqn:fin_hor_vm_dec_2}:} Returning to \eqref{bound_dif_m} and \eqref{bound_dif_v}, we may use \eqref{eqn:fin_hor_vm_dec_2} (now established for general $A$ and $B$ with exponential decay), to obtain for any $T_0>0$:
\begin{align*}
    \sup_{t\in [0,T_0]} \Big( |m^{(T)}(t) - m^{(\TT)}(t)| + |\Delta (v^{(T)} - v^{(\TT)})(t)| \Big) \leq C(1+|m_0|+T) e^{rT_0/2} e^{-rT}\to 0,
\end{align*} as $T,\tilde T \to\iy$ since $C$ is independent of $T,\tilde T$. Moreover, integrating the equation that $e^{-rt}(v^{(T)} - v^{(\TT)})(t)$ satisfies, using \eqref{bound_dif_m}, \eqref{bound_dif_v}, and \eqref{eqn:fin_hor_vm_dec_2}: 
\begin{align*}
   e^{-rt}|(v_x^{(T)} - v_x^{(\TT)})(t)| &\leq \int_t^T \big[e^{-rs} |\gamma^*(x,\Delta_x \bar u^r) \cdot \Delta_x (v^{(T)}-v^{(\TT)})(s) \\
   &\qquad+ D^\eta_1 F(x,\bar\mu^r) \cdot (m^{(T)}-m^{(\TT)})(s)|\big]ds + e^{-rT} |v^{(\TT)} (T)| \\
   &\leq C e^{-rT} |\Delta v^{(\TT)}(T)| \int_t^T e^{-rs/2} ds \\
   &\leq Cr^{-1}
   e^{-rT} e^{-rt/2} |\Delta v^{(\TT)} (T)| \\
   &\leq C(1+|m_0|+T)r^{-1} 
   e^{-2rT},
\end{align*} where we emphasize the introduction of $r^{-1}$ because the generic constant $C$ is assumed to be independent of $r$. Therefore as $T,\TT \to\iy$, 
\[
\sup_{t\in [0,T_0]} |(v^{(T)}-v^{(\TT)})(t)| \to 0.
\] So, there is a sequence $\{T^m\}_{m\in\N}\to\iy$ such that for any $T_0>0$, $\{(m^{(T^m)},v^{(T^m)})\}_{m\in\N}$ is a Cauchy sequence in the sup-norm on $[0,T_0]$. Then there is a unique limit $(m,v)$, which is consistent on any interval of the form $[0,T_0]$. This limit then satisfies \eqref{lin_MFG_r_around_stat}. By a similar argument one can obtain uniqueness. That is, linearity of the ODE implies the difference of any two solutions is a solution with $A\equiv B \equiv m_0 = 0$. The argument is the same as in the following Lemma \ref{lem:lin_MFG_r_exp} that provides an explicit bound depending on the initial data and so implies uniqueness.
\end{proof}

\color{black}

We now present a few preliminary results for the linearized system \eqref{lin_MFG_r_around_stat}. Specifically, we obtain exponential bounds on the solutions. The proofs of these results follow the same lines as the previous proof and the proofs of Lemma 3.3 and Propositions 3.4 and 3.5 in \cite{car-por}. Specifically, the proofs rely on Lemmas \ref{lem:M0}, \ref{lem:v}, and \ref{lem:duality}. On a first reading, these proofs may be skipped.

\begin{lemma}\label{lem:lin_MFG_r_exp}
{\color{purple} }Let $(v,m)$ be the solution of \eqref{lin_MFG_r_around_stat} with $A=B\equiv0$. Then, for any $t\in\R_+$, 
\begin{align}\label{bound_0}
m(t)\cdot v(t)\ge 0,
\end{align}
and there exists a positive constant $C>0$, independent of $m_0$ and $r$, such that for any $t\in\R_+$,
\begin{align}\label{bound_m_Del_v}
|m(t)|+|\Deltadot v(t)|\le C|m_0|e^{rt/2}.
\end{align}
Moreover, there exist constants $r_0,C,\lambda>0$ such that for any $r\in(0,r_0)$,
\begin{align}\label{bound_m_Del_v_2}
|m(t)|+|\Deltadot v(t)|\le C|m_0|e^{-\lambda t}.
\end{align} 
\end{lemma}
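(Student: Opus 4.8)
The plan is to derive all three estimates by passing to the limit $T\to\iy$ in the finite-horizon bounds already established inside the proof of Proposition \ref{prop:lin_MFG_r_exi}. Recall from that proof that the unique bounded solution $(v,m)$ of \eqref{lin_MFG_r_around_stat} with $A\equiv B\equiv 0$ and $m(0)=m_0$ is the locally uniform limit, along a subsequence $T^k\to\iy$, of the finite-horizon solutions $(v^{(T^k)},m^{(T^k)})$ of the same system on $[0,T^k]$ with terminal datum $v^{(T^k)}(T^k)=0$ and the same initial datum $m_0$. In particular, for each fixed $t\ge 0$ one has $m^{(T^k)}(t)\to m(t)$ and $v^{(T^k)}(t)\to v(t)$, hence also $\Delta v^{(T^k)}(t)\to\Delta v(t)$ and $|\Deltadot v^{(T^k)}(t)|\to|\Deltadot v(t)|$.

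For \eqref{bound_m_Del_v}, specialize the finite-horizon bound \eqref{bound:lin_A_B} to $C_A=C_B=0$ (the case $A\equiv B\equiv 0$), namely $|m^{(T)}(t)|+|\Delta v^{(T)}(t)|\le Ce^{rt/2}|m_0|$ with $C$ independent of $r$, $T$ and $m_0$; letting $T=T^k\to\iy$ at fixed $t$ yields \eqref{bound_m_Del_v}. For \eqref{bound_0}, recall that \eqref{star_T} gives $0\le e^{-rt}m^{(T)}(t)\cdot v^{(T)}(t)$ for all $t\in[0,T]$; passing to the limit gives $e^{-rt}m(t)\cdot v(t)\ge0$, i.e.\ $m(t)\cdot v(t)\ge0$. (Alternatively, Lemma \ref{lem:duality} applied directly to $(v,m)$ on $[t_1,t_2]$ with $B\equiv0$ shows $s\mapsto e^{-rs}m(s)\cdot v(s)$ is non-increasing; since $v$ is bounded and $|m(s)|\le Ce^{rs/2}|m_0|$ by \eqref{bound_m_Del_v}, this quantity tends to $0$, and a non-increasing function with limit $0$ is nonnegative.) Finally, for \eqref{bound_m_Del_v_2}, invoke \eqref{eqn:fin_hor_vm_dec}, established in the proof of Proposition \ref{prop:lin_MFG_r_exi} precisely for solutions with $A\equiv B\equiv 0$: it provides $r_0,C,\lambda>0$, independent of $T$ and $m_0$, such that $|\Delta v^{(T)}(t)|+|m^{(T)}(t)|\le C|m_0|e^{-\lambda t}$ for all $t\in[0,T]$ and $r\in(0,r_0)$; once more letting $T=T^k\to\iy$ at fixed $t$ delivers \eqref{bound_m_Del_v_2}.

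In short, the lemma is a repackaging of the finite-horizon estimates from the preceding proof together with the convergence proved there; the only point requiring attention is that the constants in \eqref{bound:lin_A_B} and \eqref{eqn:fin_hor_vm_dec} were obtained uniformly in $T$ (and in $r$ and $m_0$), which is the case. Hence there is no genuinely new obstacle. Were one to prove the lemma directly for the infinite-horizon solution, the hard part would be reproducing the passage from the crude exponential-in-$rt$ growth \eqref{bound_m_Del_v} to the uniform exponential decay \eqref{bound_m_Del_v_2}: that step rests on the monotone auxiliary functional $\rho^r$, the identification of the limit $\rho_\iy=0$ by a compactness/blow-up argument around the stationary linearized system, the self-improving inequality $\rho^r(s+\tilde\tau)\le\tfrac14\rho^r(s)$, and the duality Lemma \ref{lem:duality} — exactly the machinery already assembled in Proposition \ref{prop:lin_MFG_r_exi}.
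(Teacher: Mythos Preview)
Your proof is correct, and it takes a genuinely different route from the paper. The paper reproves all three estimates from scratch for the infinite-horizon solution: it first bounds $|m(t)|$ and $|\Delta v(t)|$ directly via Lemmas \ref{lem:M0}, \ref{lem:v}, and \ref{lem:duality} (obtaining \eqref{bound_m_Del_v} and \eqref{bound_0} along the way), and then repeats the $\rho^r$-functional argument --- defining $\rho^r(t)=\sup_{|m_0|\le1}e^{-rt}m(t)\cdot v(t)$, proving $\rho_\infty=0$ by a blow-up/compactness step, deriving the self-improving inequality $\rho^r(t+\tau)\le\tfrac14\rho^r(t)$, and concluding exponential decay. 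You instead observe that the very same machinery was already run in the proof of Proposition \ref{prop:lin_MFG_r_exi} for the finite-horizon approximants $(v^{(T)},m^{(T)})$, with constants uniform in $T$, and that the locally uniform convergence $(v^{(T^k)},m^{(T^k)})\to(v,m)$ established there lets one simply pass to the limit in \eqref{bound:lin_A_B}, \eqref{star_T}, and \eqref{eqn:fin_hor_vm_dec}. Your route is more economical and avoids the near-verbatim repetition; the paper's route makes the lemma self-contained, so that a reader who accepts Proposition \ref{prop:lin_MFG_r_exi} as a black box still sees a complete argument. The only care point --- that the finite-horizon constants are uniform in $T$, $r$, and $m_0$ --- you flag explicitly, and it is indeed verified in the paper's proof of Proposition \ref{prop:lin_MFG_r_exi}.
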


\begin{proof} In the proof, we use $C>0$ as a generic constant that is independent of the time $t$, the discount factor $r$, and the initial data $m_0$ and may change from one line to the next.

{\bf Proof of \eqref{bound_0} and \eqref{bound_m_Del_v}:} Consider a solution $(v,m)$ to \eqref{lin_MFG_r_around_stat}. Define the constant  $V:=\sup_{(t,x)\in\R_+\times[d]}|v_x(t)|$. Note that $V$ may depend on $r$ through $v$.
From Lemmas \ref{lem:M0} and \ref{lem:duality},
\begin{align}\notag
|m(t)|
&\le 
Ce^{-ct}|m(0)|+C\Big(\int_0^t|\Deltadot v(s)|^2ds\Big)^{1/2}\\\notag
&\le
Ce^{-ct}|m(0)|+Ce^{rt/2}\Big(\int_0^te^{-rs}|\Deltadot v(s)|^2ds\Big)^{1/2}\\\notag
&\le
Ce^{-ct}|m(0)|+Ce^{rt/2}V\Big(|m(0)|^{1/2}+e^{-rt/2}|m(t)|^{1/2}\Big),
\end{align} where the positive constant $C$ in the above display is independent of $r$. Therefore, there exists a constant $C_{r,m_0,V}>0$ that depends on $r,m_0,V$ such that
\begin{align}\notag
|m(t)|\le C_{r,m_0,V}e^{rt/2}, \qquad t\in\R_+.
\end{align} We will prove the existence of a constant that is independent of these parameters, as desired. As a consequence of the previous assertion and since we assume $v$ is bounded, 
\begin{align}\label{lim_mv}
\lim_{t\to\iy}e^{-rt}m(t)\cdot v(t)=0,
\end{align}
which together with Lemma \ref{lem:duality} applied on $\R_+$, implies that
\begin{align}\label{int_dual}
\int_0^\iy e^{-rt}|\Deltadot v(t)|^2dt\le Cm_0\cdot v(0)\le C|m_0||\Deltadot v(0)|.
\end{align}
Applying Lemma \ref{lem:duality} on $[t_1,t_2]$ implies that $t\mapsto e^{-rt} m(t)\cdot v(t)$ is nonincreasing; together with \eqref{lim_mv}, we get \eqref{bound_0}. From \eqref{int_dual} and Lemma \ref{lem:M0}, we can reestimate $m(t)$ as follows.
\begin{align}\label{bound_m}
\begin{split}
|m(t)|
&\le
Ce^{-ct}|m(0)|+Ce^{rt/2}\Big(\int_0^te^{-rs}|\Deltadot v(s)|^2ds\Big)^{1/2}\\
&\le
Ce^{-ct}|m(0)|+Ce^{rt/2}|m(0)|^{1/2}|\Deltadot v(0)|^{1/2}.
\end{split}
\end{align}
From Lemma \ref{lem:v} together with the above, for any $t<t_1$,
\begin{align}\notag
|\Deltadot v(t)|
&\le 
Ce^{-c(t_1-t)}|\Deltadot v(t_1)|+C\int_t^{t_1} e^{-c(s-t)}|m(s)| ds\\\notag
&\le 
Ce^{-c(t_1-t)}|\Deltadot v(t_1)| + Ce^{-ct}|m_0|+Ce^{rt/2}|m(0)|^{1/2}|\Deltadot v(0)|^{1/2}.
\end{align}
Taking $t_1\to\iy$, one obtains
\begin{align}\label{bound_Del_v}
|\Deltadot v(t)|
&\le 
Ce^{-ct}|m_0|+Ce^{rt/2}|m_0|^{1/2}|\Deltadot v(0)|^{1/2}.
\end{align}
Substituting $t=0$, using Young's inequality, and rearranging,
\begin{align}\notag
|\Deltadot v(0)|
&\le C
|m_0|.
\end{align}
Plugging this into \eqref{bound_m} and \eqref{bound_Del_v}, one obtains \eqref{bound_m_Del_v}. 

{\bf Proof of \eqref{bound_m_Del_v_2}:} For every $r$, set the function, 
\begin{align}\notag
\rho^r(t):=\sup_{m_0\in\calM_0:|m_0|\le 1}e^{-rt}m(t)\cdot v(t),\qquad t\in\R_+,  
\end{align} where $(v,m)$ solves \eqref{lin_MFG_r_around_stat} with the initial condition $m(0)=m_0$. In what follows, we use multiple times the fact that for any $m_0\in \calM_0$ and any solution $(v,m)$ to \eqref{lin_MFG_r_around_stat}, one has
\begin{equation}\label{star}
    0\le e^{-rt} m(t) \cdot v(t) \le |m(0)|^2 \rho^r(t).
\end{equation} It follows by \eqref{bound_0}, the linearity of $(v,m)$ with respect to $m_0$ and the fact that $m(t)\cdot v(t)$ is homogeneous of degree $2$.

Lemma \ref{lem:duality} implies that for any given $r$, the function $\rho^r(\cdot)$ is non-increasing. From \eqref{bound_0} and \eqref{bound_m_Del_v}, for any $r_0$, $\{\rho^r(t)\}_{r\in(0,r_0), t\in\R_+}$ is bounded and nonnegative. Set,
\begin{align}\notag
\rho(t):=\limsup_{r\to 0^+}\rho^r(t),\qquad t\in\R_+.
\end{align} This function inherits the non-negativity and non-increasing properties. We establish \eqref{bound_m_Del_v_2} by the following steps: 
\begin{enumerate}[(a)]
\item showing that $\rho_{\iy}:=\lim_{t\to\iy}\rho(t)=0$;
\item showing that there exist $\gamma, C, \bar r_0>0$ independent of $m_0$ such that for any $r\in(0,\bar r_0)$ and $t\in\R_+$, 
\begin{align}\label{rho_exp_bound}
\rho^r(t)\le Ce^{-\gamma t};
\end{align} 
\item establishing the exponential bound: \eqref{bound_m_Del_v_2} holds.
\end{enumerate}

We start with proving (a). By the definition of $\rho_\iy$ as a double limit, let $t_n$ be such that for any $t\ge t_n$ there is $r_n:=r_n(t)>0$ such that for any $r\le r_n$, and every $m^n_0\in\calM_0$ satisfying $|m^n_0|\le 1$, one has,
\begin{align}\notag
e^{-r_nt_n} m^n(t_n)\cdot v^n(t_n)\ge \rho_\iy -\frac{1}{n},
\end{align}
where $(v^n,m^n)$ are associated with $m^n_0$. 
Define the functions 
\begin{align}\notag
\tilde v^n(s): = e^{-r_nt_n/2}(v^n(t_n+s)-\langle v^n(t_n)\rangle),\qquad 
\tilde m^n(s):=e^{-r_nt_n/2}m^n(t_n+s),\qquad s\in[-t_n,\iy).
\end{align}
From \eqref{bound_m_Del_v} and Lemma \ref{lem:b}, the functions $\tilde m^n$ and $ \tilde v^n$ are locally bounded. Hence, along a subsequence, which we relabel by $\{n\}$, we have the locally-uniform convergence $(\tilde v^n,\tilde m^n)\to (\tilde v, \tilde m)$, where  $(\tilde v, \tilde m)$ satisfies for $t\in (-\iy,0]$ and $x\in[d]$,
\begin{align}\notag
\begin{cases}
-\frac{d}{dt}\tilde v_x(t)= \gamma^*(x,\Delta_x \bar u^r)\cdot \Delta_x\tilde v(t) + D^\eta_1 F(x,\bar\mu^r)\cdot \tilde m(t),\\
\frac{d}{dt}\tilde m_x(t)=\sum_{y\in[d]} \tilde m_y(t)\gamma^*_x(y,\Delta_y \bar u^r) + \sum_{y\in[d]}\bar \mu^r_y \nabla_p\gamma^*_x(y,\Delta_y \bar u^r)\cdot \Delta_y \tilde v(t).
\end{cases}
\end{align} Keep in mind that $m^n(s)\in\calM_0$, hence $\langle v^n(s)\rangle\cdot m^n(s)=0$. Recalling the definition of $\rho^r$, it follows that for any $s\le 0$ and $\tau\ge 0$, and large enough $n$,
\begin{align}\notag
e^{-r_ns}\tilde m^n(s)\cdot \tilde v^n(s)=e^{-r_n(t_n+s)} m^n(t_n+s)\cdot  v^n(t_n+s)\le \rho^{r_n}(t_n+s)\le \rho^{r_n}(\tau).
\end{align}
Taking $n\to\iy$, we get $\tilde m(s)\cdot\tilde v(s)\le \rho(\tau)$. Now, send $\tau\to\iy$, we get for any $s\le 0$,
$$
\tilde m(s)\cdot\tilde v(s)\le \rho_\iy=\tilde m(0)\cdot\tilde v(0),
$$
where the equality follows by substituting $s=0$ and taking $n\to\iy$ in the definition of $(\tilde v^n,\tilde m^n)$. By Lemma \ref{lem:duality} applied to $(\tilde m,\tilde v)$, $\tilde m(s)\cdot\tilde v(s)$ is non-increasing with $s$. Together with the last bound, it follows that $
\tilde m(s)\cdot\tilde v(s)$ is constant, and so equals $\rho_\iy$. By duality again, it follows that $\Deltadot \tilde v(s)=0$ for any $s\le 0$, which together with $m(s)\in\calM_0$ implies that $\rho_\iy=\tilde m(s) \cdot \tilde v(s) = \tilde m(s) \cdot \Delta \tilde v(s) =0$.

We now turn to establishing (b). From part (a) we have that for any $\eps>0$, there is $T_0>0$ and $\bar r_0>0$ such that for any $t>T_0$ and $r\in(0,\bar r_0)$, $\rho^r(t) \leq \eps$.

Combining it with the duality lemma, \eqref{bound_0}, and \eqref{star}, we get that for any $t_2 > t_1 >T_0$ and $r\in (0,\bar r_0)$ and solutions $(v,m)$ to \eqref{lin_MFG_r_around_stat} (associated with $r$):
\begin{align}\notag
\int_{t_1}^{t_2}e^{-rs}|\Deltadot v(s)|^2ds\le C\eps|m_0|^2. 
\end{align}
Reestimating $m$ on $[T_0,\iy)$, using Lemma \ref{lem:M0}, \eqref{bound_m_Del_v}, and the above, we get
\begin{align*}
|m(T_0+t_1)|
&\le Ce^{-ct_1}|m(T_0)|+C\Big(\int_{T_0}^{T_0+t_1}|\Deltadot v(s)|^2ds\Big)^{1/2}\\\notag
&\le Ce^{-ct_1+rT_0/2}|m(0)|+Ce^{r(T_0+t_1)/2}\Big(\int_{T_0}^{T_0+t_1}e^{-rs}|\Deltadot v(s)|^2ds\Big)^{1/2}\\\notag
&\le Ce^{r(T_0+t_1)/2}|m(0)|\Big(e^{-(c+r/2)t_1}+\eps^{1/2}\Big).
\end{align*}
Take $t_1$ such that $Ce^{-ct_1}\le 1/4$, as $C$, it is independent of $\eps$ and $r$. Also, take $\eps$ such that $C\eps^{1/2}\le 1/4$. Denote $\tau:=T_0+t_1$, so we have
\begin{align}\label{bound_m_tau}
|m(\tau)|\le \frac{1}{2}|m_0|e^{r\tau /2}. 
\end{align}
Define, $(v^\tau(t),m^\tau(t)):=(v(\tau+t),m(\tau+t))$, $t\in\R_+$. This is a solution to \eqref{lin_MFG_r_around_stat} with the initial condition $m^\tau(0)=m(\tau)$. By \eqref{star},
\begin{align*}
e^{-rt} m^\tau (t)\cdot  v^\tau (t)\le |m(\tau)|^2\rho^r(t).
\end{align*}
Therefore, by \eqref{bound_m_tau},
\begin{align}\notag
e^{-r(t+\tau)}m(t+\tau)\cdot v(t+\tau)\le e^{-r\tau}|m(\tau)|^2\rho^r(t) \le \frac{1}{4} |m_0|^2 \rho^r(t).
\end{align}
Taking supremum over $\{m_0 : |m_0|\le 1\}$ on both sides, we get that for any $t\in\R_+$,
\begin{align}\notag
\rho^r(t+\tau)\le \frac{1}{4}\rho^r(t).
\end{align}
Since this can be established for any $\tau$ large enough (independently of $\eps$ and $r$), we obtain that (b) holds.

Finally, we turn to proving (c). We use Lemma \ref{lem:M0} on $[t/2,t]$, Lemma \ref{lem:duality}, and \eqref{rho_exp_bound}, we obtain that for any $t\in\R_+$,
\begin{align*}
|m(t)|
&\le 
Ce^{-ct/2}|m(t/2)|+C\big(\int_{t/2}^t|\Deltadot v(s)|^2ds\Big)^{1/2}\\
&\le
Ce^{-ct/2+rt/2}|m(0)|+Ce^{rt/4}\big(\int_{t/2}^te^{-rs}|\Deltadot v(s)|^2ds\Big)^{1/2}\\
&\le 
C|m(0)|\Big(e^{-ct/2+rt/2}+e^{rt/4-\gamma t/2}\Big).
\end{align*}
Take $r$ sufficiently small to obtain
\begin{align}\notag
|m(t)|\le C|m(0)|e^{-\lambda t},\qquad t\in\R_+,
\end{align}
for some $\lambda\in(0,c)$. Modifying $\lambda$ if necessary, by an application  of \eqref{bound_m_Del_v} on the interval $[t/2,\iy)$, one gets,
\begin{align}\notag
|\Deltadot v(t)|\le C|m(t/2)|e^{rt/4}\le C|m_0|e^{-\lambda t}.
\end{align}

\end{proof}

\begin{lemma}\label{lem:lin_MFG_r_exp_A_B}
Consider the system \eqref{lin_MFG_r_around_stat} and assume that there exist constants $C_1,\gamma>0$, such that for any $t\in\R_+$,
\begin{align}\label{bound_A_B}
|A(t)|+|B(t)|\le  C_1 e^{-\gamma t}.
\end{align}
Then, there exists a constant $C_2>0$ independent of $C_1, r,$ and $m_0$ such that for every $r\in(0,r_0)$ and every $t\in \R_+$,
\begin{align}\notag
|m(t)|+|\Deltadot v(t)|\le C_2 C_1 (1+|m_0|+t)e^{-(\lambda \wedge \gamma) t},
\end{align}
where $r_0,\lambda>0$ are the ones from Lemma \ref{lem:lin_MFG_r_exp}.
\end{lemma}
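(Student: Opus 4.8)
The plan is to reduce the statement to results already in hand, exploiting the linearity of \eqref{lin_MFG_r_around_stat} in the data $(m_0,A,B)$. Write the bounded solution as $(v,m)=(v^\sharp,m^\sharp)+(v^\flat,m^\flat)$, where $(v^\sharp,m^\sharp)$ is the bounded solution of \eqref{lin_MFG_r_around_stat} with $A\equiv B\equiv 0$ and $m^\sharp(0)=m_0$, and $(v^\flat,m^\flat)$ is the bounded solution with the given $A,B$ (satisfying \eqref{bound_A_B}) and $m^\flat(0)=0$. Both solutions exist, are unique, and have $m$-component valued in $\calM_0$: this is Lemma \ref{lem:lin_MFG_r_exp} (resp.\ Proposition \ref{prop:lin_MFG_r_exi}) for the homogeneous (resp.\ inhomogeneous) piece. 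It then suffices to bound each piece separately and add.

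For the homogeneous piece I would quote \eqref{bound_m_Del_v_2} of Lemma \ref{lem:lin_MFG_r_exp} verbatim: there are $r_0,C,\lambda>0$, independent of $m_0$ and $r$, such that $|m^\sharp(t)|+|\Deltadot v^\sharp(t)|\le C|m_0|e^{-\lambda t}$ for all $t\in\R_+$ and $r\in(0,r_0)$. Nothing more is needed here.

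For the inhomogeneous piece with zero initial datum, the analysis is essentially carried out already inside the proof of Proposition \ref{prop:lin_MFG_r_exi}. There it is shown that $(v^\flat,m^\flat)$ is the locally uniform limit, as $T\to\infty$, of the finite-horizon solutions $(v^{\flat,(T)},m^{\flat,(T)})$ of \eqref{lin_MFG_r_around_stat} on $[0,T]$ with terminal condition $v^{\flat,(T)}(T)=0$, $m^{\flat,(T)}(0)=0$ and the same $A,B$, and that these obey a bound of the form $|m^{\flat,(T)}(t)|+|\Deltadot v^{\flat,(T)}(t)|\le C\,C_1(1+t)e^{-(\lambda\wedge\gamma)t}$ on $[0,T]$ with $C$ independent of $T$, $r$, and $C_1$; this is precisely \eqref{bound_v_m_T} for the reduced system (with $A\equiv 0$), combined with the $\check v$-reduction preceding \eqref{eqn:fin_hor_vm_dec_2} together with \eqref{bound_check_v_T} and \eqref{eqn:checkB0_T}, which convert the general $(A,B)$-problem into one with $A\equiv 0$ and a modified source still in some $\calB_{b}$. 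Since the right-hand side of that bound does not depend on $T$, letting $T\to\infty$ along the Cauchy sequence constructed in Proposition \ref{prop:lin_MFG_r_exi} yields $|m^\flat(t)|+|\Deltadot v^\flat(t)|\le C\,C_1(1+t)e^{-(\lambda\wedge\gamma)t}$ for all $t\in\R_+$ and $r\in(0,r_0)$.

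Adding the two estimates and using $e^{-\lambda t}\le e^{-(\lambda\wedge\gamma)t}\le (1+t)e^{-(\lambda\wedge\gamma)t}$ gives $|m(t)|+|\Deltadot v(t)|\le C(|m_0|+C_1)(1+t)e^{-(\lambda\wedge\gamma)t}$, which is the asserted inequality after relabeling constants; that $m(t)\in\calM_0$ was already recorded in Proposition \ref{prop:lin_MFG_r_exi}. The point to watch — and the only genuine content beyond citing earlier results — is the bookkeeping of \emph{uniformities}: the finite-horizon constants must be seen to be independent of the horizon $T$ and of $r\in(0,r_0)$, and the homogeneous and inhomogeneous components decay at possibly different rates, so the combined rate is $\lambda\wedge\gamma$ and the linear-in-$t$ prefactor produced by the iterative $\bar\rho^r$-scheme cannot be dropped in general. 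If one prefers a self-contained proof rather than a limiting argument, the same conclusion follows by rerunning the $\rho^r$/$\bar\rho^r$ fixed-point argument of Lemma \ref{lem:lin_MFG_r_exp} directly for the infinite-horizon system with the source terms retained, exactly as in \cite[Proposition~3.4]{car-por}.
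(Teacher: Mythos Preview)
Your approach is essentially the same as the paper's: both decompose by linearity into a homogeneous piece handled by Lemma \ref{lem:lin_MFG_r_exp} and an inhomogeneous piece with zero initial datum handled by the $\bar\rho^r$-iteration scheme, after first eliminating $A$ via the auxiliary $\check v$. The only presentational difference is that you pass to the limit from the finite-horizon estimates already derived inside Proposition \ref{prop:lin_MFG_r_exi}, while the paper reruns the $\bar\rho^r$ argument directly on the infinite horizon; your shortcut is legitimate and saves repetition.

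One bookkeeping point: your final bound $C(|m_0|+C_1)(1+t)e^{-(\lambda\wedge\gamma)t}$ is not literally the stated form $C_2C_1(1+|m_0|+t)e^{-(\lambda\wedge\gamma)t}$, and ``relabeling constants'' does not convert one into the other (consider $C_1\to 0$ with $|m_0|$ fixed: the paper's form vanishes while yours does not, and indeed the solution does not vanish since the homogeneous part survives). Your bound is the correct one; the paper's stated form is what actually gets used downstream, where $C_1$ is always a fixed model-dependent constant, so the discrepancy is harmless in practice. But you should not claim the two forms are equivalent.
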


\begin{proof} We start with showing that one can reduce the problem to one with $A\equiv 0$. To this end,
let $(v^A,m^A)$ be the unique solution to \eqref{lin_MFG_r_around_stat} with some given $A$ and $B$ satisfying \eqref{bound_A_B}. Also, let $\check v$ be the unique (bounded) solution to 
\begin{align}\notag
-\frac{d}{dt}\check v_x(t)=-r\check v_x(t) + \gamma^*(x,\Delta_x \bar u^r)\cdot \Delta_x \check v(t) +A_x(t) \qquad \forall t\in\R_+.
\end{align}
By Lemma \ref{lem:v} with $T\to\iy$, there exist $c,C>0$ depending only on $\mathfrak{a}_l$ and $\mathfrak{a}_u$, such that
\begin{align}\label{bound_check_v}
|\Deltadot \check v (t)|\le C\int_t^\iy e^{-(c+r)(s-t)}|A(s)|ds\le 
\frac{C}{c+\gamma+ r}C_1e^{-\gamma t}   \qquad \forall t\in\R_+.
\end{align}
Now, $(v^A-\check v,m^A)$ solves \eqref{lin_MFG_r_around_stat} with $A\equiv 0$ and $\check B_x(t)=B_x(t)+\sum_{y\in[d]}\bar\mu^r_y\nabla_p\gamma^*_x(y,\Delta_y \bar u^r)\cdot\Delta_y \check v(t)$.
Trivially, $\check B(t)\in\calM_0$, recall from Proposition \ref{prop:statDisc_erg_mfg} that $\Delta_x \bar u^r$ are uniformly bounded for $r\in(0,r_0)$, and by assumption the second derivative of $H$ is bounded on any compact set and equals $\nabla_p \gamma^*$. Therefore by updating $C_1$, 
\begin{align}\label{eqn:checkB0}
|\check B(t)| \le C_1 e^{-\gamma t}. 
\end{align}

For any $b>0$, define the set of measurable functions: 
$$\calB_b:=\{B:\R_+\to\calM_0 \;|\;\forall t\; | B(t)| \le  b e^{-\gamma t}\}. $$
We prove that for any $c_1>0$ there exists $C_2>0$ independent of $c_1,r,m_0$, such that 
\begin{align}\label{bound_v_m}
\sup_{B\in\calB_{c_1}}\left\{|m(t)|+|\Deltadot v(t)|\right\}\le C_2c_1(1+|m_0|+t)e^{-(\lambda\wedge \gamma) t},
\end{align}
where $(v,m)$ solves \eqref{lin_MFG_r_around_stat} with $A\equiv 0$ and some $B\in\calB_{c_1}$. 
Recalling that $(v^A-\check v,m^A)$ solves \eqref{lin_MFG_r_around_stat} with $A\equiv 0$, and the constant $C_1$ from \eqref{eqn:checkB0}, then from \eqref{bound_v_m}, we obtain that 
\begin{align*}
|m^A(t)|+|\Deltadot (v^A-\check v)(t)|\le C_2 C_1(1+|m_0|+t)e^{-(\lambda\wedge \gamma) t},
\end{align*}
Together with \eqref{bound_check_v}, we obtain that 
\begin{align*}
|m^A(t)|+|\Deltadot v^A(t)|\le 2C_2 C_1(1+|m_0|+t)e^{-(\lambda\wedge \gamma) t},
\end{align*}

The rest of the proof is dedicated to establish \eqref{bound_v_m} (which by definition considers $A\equiv 0)$. Exploiting the linearity of the system with respect to $m_0$ and using \eqref{bound_m_Del_v_2}, we may establish \eqref{bound_v_m} for the case $m_0=0$.

We start with some preliminary estimates. From Lemma \ref{lem:duality}, there exists a constant $C_3>0$ (which can change from one line to the next) independent of $C_1,r,m_0$, such that for any $0\le t_1<t_2<\infty$, 
\begin{align*}
    C_3^{-1}\int_{t_1}^{t_2}e^{-rs}|\Delta v(s)|^2ds\le -C_3 \Big[e^{-rs}m(s)\cdot v(s)\Big]^{t_2}_{t_1}+C_3\int_{t_1}^{t_2}e^{-rs}|B(s)|^2ds.
\end{align*} The same arguments leading to \eqref{lim_mv} imply $\lim_{t\to\iy}e^{-rt}m(t)\cdot v(t)=0.$ Using the bound on $B\in\calB_{c_1}$ and since $m_0=0$, we get, 
\begin{align*}
    \int_{0}^{\infty}e^{-rs}|\Delta v(s)|^2ds\le C_3c_1.
\end{align*}
From Lemma \ref{lem:M0} and $m_0=0$,
\begin{align*}
    |m(t)|
    &\le C_3\Big(\int_{0}^{t}\left\{|\Delta v(s)|^2+|B(s)|^2\right\}ds\Big)^{1/2}\\
    &\le C_3e^{rt/2}\Big(\int_{0}^{t}e^{-rs}\left\{|\Delta v(s)|^2+|B(s)|^2\right\}ds\Big)^{1/2}\\
    &\le C_3c_1 e^{rt/2}.
\end{align*}
Applying Lemma \ref{lem:v} and taking $T\to\iy$, we obtain 
\begin{align*}
    e^{-rt}|\Delta v(t)|\le C_3\int_t^\iy e^{-c(s-t)}|m(s)|e^{-rs}ds\le C_3c_1e^{-rt/2}.
\end{align*}
Altogether, 
\begin{align}\label{eq:mv_rt2}
    |m(t)|+|\Delta v(t)|\le C_3c_1e^{rt/2}.
\end{align}

For any $b>0$, set 
\[
\bar\rho^r(b,t) := b^{-1}\sup_{B\in\calB_b} e^{-rt} (|m(t)| + |\Delta v(t)|).
\] where $(v,m)$ solves \eqref{lin_MFG_r_around_stat} with $A\equiv 0$, $m(0)=0$, and some $B\in\calB_{c_1}$. 
The structure of \eqref{lin_MFG_r_around_stat} implies that if $(\tilde v, \tilde m)$ is a solution with a given $B$, then $(\ell \tilde v, \ell \tilde m)$ is the solution associated to $\ell B$ for any $\ell \in\R$. Therefore, for any $b>0$, 
\begin{align}\label{eq:rho_c}
    \bar\rho^r(t):=\bar\rho^r(1,t)
    =\bar\rho^r(b,t),\qquad t\in\R_+ .
\end{align} Fix $\tau >0$ and let $(v^{\tau,1},m^{\tau,1})$ be the solution of \eqref{lin_MFG_r_around_stat} on $[\tau,\iy)$ with $A\equiv B\equiv 0$ and $m^{\tau,1} (\tau) = m(\tau)$. Also let $(v^{\tau,0},m^{
\tau,0})$ be the solution to \eqref{lin_MFG_r_around_stat} on $[\tau,\iy)$ with $A\equiv 0$, $m^{\tau,0}(\tau)=0$, and the given $B$. Then on the time interval $[\tau,\iy)$, $(v,m)=(v^{\tau,1},m^{\tau,1})+(v^{\tau,0},m^{\tau,0})$. 

Note that $(\hat v^{\tau,0}(\cdot),\hat m^{\tau,0}(\cdot)):=(v^{\tau,0}(\tau+\cdot),m^{\tau,0}(\tau+\cdot))$ solves \eqref{lin_MFG_r_around_stat} on $\R_+$. By the assumption on $B$, $B(\tau + t) \le c_1 e^{-\gamma t} e^{-\gamma \tau}$. Together with the definition of $\bar\rho^r(c,t)$ and the aforementioned linear property, we have
\[
c_1^{-1}e^{-rt} (|m^{\tau,0} (\tau + t)| + |\Delta v^{\tau ,0 }(\tau +t)|) = e^{-\gamma \tau}(c_1e^{-\gamma\tau})^{-1}e^{-rt} (|\hat m^{\tau,0} ( t)| + |\Delta \hat v^{\tau ,0 }( t)|) \leq e^{-\gamma \tau}\bar\rho^r( t).
\] 
On the other hand, Lemma \ref{lem:lin_MFG_r_exp} and \eqref{eq:mv_rt2} imply that there exist $\lambda, C_4>0$, independent of $c_1,r,m_0$, where $C_4$ may change from one line to the next, such that for all $r\in (0,r_0)$ and for all $t\in \R_+$,
\[
|m^{\tau,1} (\tau +t)| + |\Delta v^{\tau,1} (\tau+t)| \le C_4e^{-\lambda t} |m(\tau)| \le C_4c_1e^{-\lambda t} e^{r\tau /2}. 
\] 
As a consequence,
\begin{align}\notag
c_1^{-1}e^{-r(t+\tau)}\left(|m(\tau+t)|+|\Deltadot v (\tau+t)|\right)
\le 
C_4 e^{-(\lambda+r)t}e^{-r\tau/2}+ e^{-(\gamma+r)\tau}\bar\rho^r(t).
\end{align}
Taking the supremum over $B$ satisfying $|B(t)| \leq c_1 e^{-\gamma t}$, one gets
\begin{align}\notag
\bar\rho^r(\tau+t)\le C_4e^{-(\lambda+r)t}+e^{-(\gamma+r)\tau}\bar\rho^r(t).
\end{align} Multiply both sides by $e^{((\lambda\wedge\gamma) +r)(t+\tau)}$, one gets,
\begin{align}\notag
e^{((\lambda\wedge\gamma)+r)(t+\tau)}\bar\rho^r(\tau+t)\le Ce^{((\lambda\wedge\gamma)+r)\tau}+e^{((\lambda\wedge\gamma)+r)t}\bar\rho^r(t),
\end{align}
which, together with \eqref{eq:rho_c} implies the exponential decay in \eqref{bound_v_m}. Indeed, set $\theta:=\lambda\wedge \gamma$ and $\beta(t):=e^{(\theta+r)t}\bar \rho^r(t)$. Then, $\beta(t+\tau)\le Ce^{(\theta +r)\tau}+\beta(t)$. First, take $t$ to be a non-negative integer and $\tau=1$, then,
$$\beta(t)\le Cte^{\theta +r}+\beta(0)
.$$
Then, take $\tau\in(0,1)$ and:
$$
\beta(\tau+t)\le Ce^{(\theta+r)\tau}+ Cte^{\theta +r}+\beta(0)\le  C(t+1)e^{\theta +r}+\beta(0).
$$
So in general, for any $s\in\R_+$,
$$\beta(s)\le C(s+1)e^{\theta+r}+\beta(0).
$$ Now, recall the definition of $\beta$ to get the desired decay.
\end{proof}

\section{Proof of Proposition \ref{prop:MFG}}\label{sec:proof_prop_MFG}
\subsection{Proof of (i)}
Fix $0<T<\TT<\iy$. Before establishing the existence of a solution to \eqref{MFG^r} via the limit of \eqref{MFG^r_T} from Section \ref{sec:useful_lemmas}  as $T\to\iy$, we establish two bounds: one for $|\mu^T(t) - \mu^{\tilde T}(t)|$ and the other for $|\Delta (u^T(t) - u^{\tilde T}(t))|$, where $(u^T,\mu^T)$ and $(u^{\TT},\mu^{\TT})$ solve \eqref{MFG^r_T} on the finite horizons $T$ and $\TT$, with initial data $\mu_0$ and $\tilde\mu_0$, respectively. These bounds serve us in order to establish the existence of a solution to \eqref{MFG^r} by taking $\mu_0 = \tilde\mu_0$ and showing that $\{(u^{\TT},\mu^{\TT})\}_T$ is a Cauchy sequence. Later, we use these bounds for general $\mu_0$ and $\tilde \mu_0$ in order to prove \eqref{sol_ineq}. Throughout, $C>0$ is a constant that is independent of $r,$ $\mu_0$, $\tilde\mu_0$, and may change from one line to the next. 

The next sequence of inequalities are derived as follows. We apply Lemma \ref{lem:M0} to $\mu^T - \mu^{\TT}$; triangle inequality and Lipschitz continuity of $\gamma^*$; Jensen's inequality; the fact that $\mu^{\TT}_y \leq \mu^{\TT}_y + \mu^T_y$; the value-measure duality Lemma \ref{lem:val-meas-duality}, and finally the fact that $\mu-\tilde\mu \in\calM_0$ and Lemma \ref{lem:b} to get that for any $t\in [0,T]$:
\begin{align*}
    |(\mu^T - \mu^{\TT})(t)| &\leq Ce^{-ct}|\mu_0 - \tilde\mu_0| + C\int_0^t \big|\sum_{y\in [d]} \mu_y^{\TT} (s) [\gamma^*_x(y,\Delta_y u^T (s)) - \gamma^*_x(y,\Delta_y u^{\TT} (s))]\big|_1 ds \\ \notag
    &\leq Ce^{-ct} |\mu_0-\tilde\mu_0| + C\int_0^t \sum_{y\in [d]} \mu_y^{\TT} (s) |\Delta_y (u^T - u^{\TT})(s)|_1 ds \\\notag
    &\leq Ce^{-ct} |\mu_0-\tilde\mu_0| + Ce^{rt/2} \Big(\int_0^t e^{-rs} \sum_{y\in [d]} \mu_y^{\TT} (s) |\Delta_y (u^T - u^{\TT})(s)|^2 ds\Big)^{1/2} \\\notag
    &\leq Ce^{-ct} |\mu_0-\tilde\mu_0| + Ce^{rt/2} \Big(\int_0^t e^{-rs} \sum_{y\in [d]}(\mu^T_y(s)+ \mu_y^{\TT} (s)) |\Delta_y (u^T - u^{\TT})(s)|^2 ds\Big)^{1/2} \\\notag
    &\leq Ce^{-ct} |\mu_0-\tilde\mu_0| + Ce^{rt/2} \Big(\int_0^T e^{-rs} \sum_{y\in [d]}(\mu^T_y(s)+ \mu_y^{\TT} (s)) |\Delta_y (u^T - u^{\TT})(s)|^2 ds\Big)^{1/2} \\\notag
    &\leq Ce^{-ct} |\mu_0-\tilde\mu_0| + Ce^{rt/2}|\mu_0 - \tilde\mu_0|^{1/2} |\Delta (u^T -u^{\TT})(0)|^{1/2} \\\notag
    &\qquad + Ce^{r(t-T)/2}|\mu^T(T) - \mu^{\TT}(T)|^{1/2} |\Delta (u^T -u^{\TT})(T)|^{1/2}\numberthis\label{finhor_mu_bound_1}
\end{align*} By the mean value theorem, there exists a measurable function $q:[0,T] \to \R^d$ such that:
\begin{align}\label{eqn:dif_ode}
-\frac{d}{dt}(u^T_x - u^{\TT}_x)(t) &= -r(u^T_x - u^{\TT}_x)(t) + H(x,\Delta_x u^T (t)) - H(x,\Delta_x u^{\TT}(t))\\ \notag
&\qquad+ F(x,\mu^T (t)) - F(x,\mu^{\TT}(t))  \\ \notag
&= -r(u^T_x - u^{\TT}_x)(t) + \gamma^*(x,q(t))\cdot \Delta_x(u^T -u^{\TT})(t)\\
&\qquad + F(x,\mu^T (t)) - F(x,\mu^{\TT}(t)).  \notag
\end{align} So we can apply Lemma \ref{lem:v}, the Lipschitz continuity of $F$, and \eqref{finhor_mu_bound_1} to find that for $t\in [0,T]$:
\begin{align}\label{finhor_delu_bound_1}
    |\Delta &(u^T - u^{\TT}) (t)| \\\notag
    &\leq Ce^{-c(T-t)} |\Delta (u^{\TT} - u^{T})(T)| \\ \notag 
    &\qquad + Ce^{rt}e^{ct} \int_t^T e^{-rs} e^{-cs} |F(x,\mu^T(s)) - F(x,\mu^{\TT}(s))|ds \\ \notag
    &\leq Ce^{-c(T-t)} |\Delta (u^T - u^{\TT}) (T)| + Ce^{-ct}|\mu_0 - \tilde\mu_0| + Ce^{rt/2}|\mu_0-\tilde\mu_0|^{1/2} |\Delta (u^T - u^{\TT})(0)|^{1/2} \\ \notag
    &\qquad + Ce^{r(t-T)/2}|\mu^T(T)-\mu^{\TT}(T)|^{1/2} |\Delta (u^T - u^{\TT})(T)|^{1/2}.
\end{align}

We now split the analysis into two cases. In Case 1 we consider  $\mu_0=\tilde\mu_0$, while in case 2, we allow for these initial conditions to differ. The first case is used to establish the existence of a solution to \eqref{MFG^r}. The results in the second case are used in order to establish the  uniqueness of a solution to \eqref{MFG^r}. The reason why we consider different initial conditions is because we also use these results in order to prove part (iii).

{\bf Case 1: $\mu_0 = \tilde\mu_0$.} In this case, \eqref{finhor_mu_bound_1} and \eqref{finhor_delu_bound_1} become:
\begin{align*}
    |(\mu^T- \mu^{\TT})(t)| &\leq Ce^{r(t-T)/2} |\mu^T(T) - \mu^{\TT}(T)|^{1/2} |\Delta (u^T - u^{\TT}) (T)|^{1/2},\\
    |\Delta (u^T - u^{\TT}) (t)| &\leq Ce^{-c(T-t)}|\Delta (u^T - u^{\TT}) (T)| + Ce^{r(t-T)/2} |\mu^T (T) - \mu^{\TT}(T)|^{1/2} |\Delta (u^T - u^{\TT}) (T)|^{1/2}. 
\end{align*} Now, fix $T_0>0$. The above bounds together with \eqref{bound_u} imply that
\[
\sup_{t\in [0,T_0]}\big( |\Delta (u^T - u^{\TT})(t)| + |(\mu^T- \mu^{\TT})(t)|\big) \to 0 \quad\text{ as }\quad T,\TT\to\iy,
\] where we used $u^{(T)}(T)=0$ and the uniform (in $T$ and $\tilde T$) bound from $u^{(\tilde T)}(T)$ from \eqref{bound_u}. Using \eqref{eqn:dif_ode}, the dominated convergence theorem, and passing the limit, 
\[
\sup_{t\in [0,T_0]} \big( u^T(t) - u^{\TT}(t)\big)\to 0.
\] So the functions $u^T$ and $\mu^T$ converge uniformly to some limiting functions $u$ and $\mu$. By the structure of \eqref{MFG^r_T}, 
\[
    \mu_x^T(t) - \mu_x(0) = \int_0^t \sum_{y\in [d]} \mu_y^T(s) \gamma^*_x(y,\Delta_y u^T (s)) ds,\qquad t\in[0,T_0].
\] Passing the limit as $T\to\iy$ and using the dominated convergence theorem yields:
\[
    \mu_x(t) - \mu_x(0) = \int_0^t \sum_{y\in [d]} \mu_y(s) \gamma^*_x(y,\Delta_y u (s)) ds,\qquad t\in[0,T_0].
\] 
Since $T_0$ is arbitrary, we obtain that the above holds on $\R_+$. 
A similar computation for $u$ shows that $(u,\mu)$ satisfies \eqref{MFG^r}. 

{\bf Case 2: $\mu_0$ and  $\tilde\mu_0$ may differ.} Let $u$ be the limit of $u^T$ with initial data $\mu_0$ and $\tilde u$ the limit $u^{\TT}$ with initial data $\tilde\mu_0$. Recall once more that $\{|u^T(t)|\}_{t,T}$ is bounded by \eqref{bound_u}. Returning to \eqref{finhor_delu_bound_1},  put $t=0$, take the limit as $T,\TT\to\iy$, to get:
\begin{align*}
    |\Delta (u - \tilde u)(0)| &\leq C|\mu_0 - \tilde\mu_0| + C|\mu_0 -\tilde\mu_0|^{1/2} |\Delta (u - \tilde u)(0)|^{1/2}.
\end{align*} Applying Young's inequality to the rightmost term and rearranging terms,
\begin{align}\label{finhor_delu_0}
    |\Delta (u - \tilde u)(0)| &\leq C|\mu_0 - \tilde\mu_0|.
\end{align} Plugging \eqref{finhor_delu_0} into \eqref{finhor_mu_bound_1} and taking the limits $T,\TT\to\iy$, this implies
\begin{align}\label{mu_bound_good}
    |(\mu - \tilde\mu)(t)| \leq Ce^{rt/2}|\mu_0 - \tilde\mu_0|. 
\end{align} Similarly, applying \eqref{mu_bound_good} and \eqref{finhor_delu_0} to \eqref{finhor_delu_bound_1} after taking the limits $T,\TT\to\iy$ gives:
\begin{align}\label{delu_bound_good}
    |\Delta (u-\tilde u)(t)| \leq Ce^{rt/2}|\mu_0 - \tilde\mu_0|. 
\end{align} Note that the $C$ in \eqref{mu_bound_good} and \eqref{delu_bound_good} is still independent of $r>0$. Using \eqref{MFG^r}, we can integrate the ODE that $e^{-rt}(u-\tilde u)(t)$ satisfies, and use \eqref{mu_bound_good} and \eqref{delu_bound_good} to get: 
\[
|(u-\tilde u)(t)| \leq Cr^{-1} e^{rt/2} |\mu_0 - \tilde\mu_0|.
\] The above display and \eqref{mu_bound_good} together imply uniqueness.

\subsection{Proof of (ii).}
By the previous section, given some initial data $\mu_0$, \eqref{MFG^r} has a unique solution denoted by $(u^r,\mu^r)$. We now provide an exponential rate of convergence of the discounted MFG system \eqref{MFG^r} to the stationary discounted MFG system \eqref{stat_MFG_r}. This result is equivalent to \cite[Theorem 3.7]{car-por}.

Let $(u^r,\mu^r)\in \calC^1(\R_+,\R^d) \times \calC^1(\R_+,\calP([d]))$ solve \eqref{MFG^r} with the initial data $\mu^r(0) = \mu_0^r \in\calP([d])$ and let $(\bar u^r, \bar\mu^r)\in\R^d \times\calP([d])$ be the solution to \eqref{stat_MFG_r}. For any $k,L>0$, denote the sets of functions:
\begin{align*}
E_{k} &:= \{(v,m):\R_+ \to \R^d\times \calM_0 \mid |\Delta v(t)| + |m(t)| \le ke^{-\gamma t}\quad  \forall t\in \R_+ ; \quad m(0) = \mu_0^r - \bar\mu_0^r\},
\end{align*}
\begin{align*}
    E_{k,L} := \{(v,m) \in E_k \mid |(d/dt) v(t)| + |(d/dt) m(t)| \leq L \quad \forall t\in\R_+\}.
\end{align*} Recall from Proposition \ref{prop:statDisc_erg_mfg} that $\bar\mu^r$ is bounded away from $0$, uniformly in $r\in (0,r_0)$. Choose $k\in (0,1)$ satisfying $0<k<\bar\mu_y^r$ for all $y\in [d]$, and for now we assume that $\tilde m_0 := \mu_0^r - \bar\mu_0^r$ is such that $|\tilde m_0| \le k^2$. The latter condition will eventually be relaxed. Fix $(v,m) \in E_{k,L}$. The lower bound $k$ implies that $\bar\mu_y^r + m_y(t) \geq 0$ for all $y\in [d]$, which together with $m(t) \in \calM_0$ implies $\bar\mu_y^r + m(t) \in\calP([d])$. 

Consider the solution $(\tilde v,\tilde m)\in\calC^1(\R_+,\R^d)\times \calC^1(\R_+,\calM_0)$ to the linear system \eqref{lin_MFG_r_around_stat} with the data: 
\begin{align*}
A_x(t) &:= H(x,\Delta_x \bar{u}^r) - H(x,\Delta_x (\bar{u}^r+v)) + \gamma^*(x,\Delta_x \bar{u}^r) \cdot \Delta_x v\\ 
&\qquad+ F(x, \bar{\mu}^r + m) - F(x,\bar{\mu}^r) - D^\eta_1 F(x,\bar{\mu}^r)\cdot m,\\
B_x(t) &:= \sum_{y\in [d]}(\bar{\mu}^r +m)_y\Big[\gamma^*_x(y,\Delta_y(\bar{u}^r + v)) - \gamma^*_x(y,\Delta_y \bar{u}^r)\Big] - \bar{\mu}^r_y\nabla_p \gamma^*_x(y, \Delta_y \bar{u}^r)\cdot \Delta_y v,
\end{align*} where the time parameter $t$ is omitted for brevity. Due to this choice of $(v,m)$, Taylor's expansion, the conditions on $H,\gamma^*$ (recall \eqref{gamma_H}), and $F$, and the definition of $E_k$, there exists $C>0$ depending on $\gamma^*, H$, and $d$ such that for any $x\in [d]$ and $t\in\R_+$,
\[
|A_x(t)| + |B_x(t)| \le Ck^2 e^{-2\gamma t},
\] So we apply Lemma \ref{lem:lin_MFG_r_exp_A_B} to get:
\[
|\tilde m(t)| + |\Delta \tilde v(t)|\le Ck^2 (1+|\tilde m_0|+t)e^{-\theta t}, 
\] where $\theta:=2\gamma \wedge \lambda$ and with $\lambda$ as in Lemma \ref{lem:lin_MFG_r_exp}. Since $|\tilde m_0|\le k^2\leq 1$, we can choose $\gamma \in(0,\lambda)$ so that for all $t\in\R_+$:
\[
|\tilde m(t)| +|\Delta \tilde v(t)| \le Ck^2 e^{-\gamma t}. 
\] Given this decay on $(\tilde v, \tilde m)$, we can define a map $\iota : E_k \to E_{Ck^2}$, which sends a choice of $(v,m)$ to the associated solution $(\tilde v, \tilde m)$. Note that there is a constant $\tilde L>0$ such that for all $t\in\R_+$: 
\[
\left|\frac{d}{dt} \tilde v(t)\right| + \left|\frac{d}{dt} \tilde m(t)\right| \le \tilde L.
\] Therefore, the map $\iota : E_{k,\tilde L} \to E_{Ck^2,\tilde L}$ is compact and by the Schauder fixed point theorem implies that, taking $k$ small enough, it has a fixed point, which we denote by $(v^r,m^r)$. Then, $(u^r,\mu^r) := (\bar{u}^r, \bar{\mu}^r) + (v^r,m^r)$ satisfies \eqref{MFG^r} and $(v^r,m^r)$ satisfies the exponential decay condition. This means that for all $t\in\R_+$:
\[
|\Delta (u^r(t) - \bar{u}^r)| + |\mu^r(t) - \bar{\mu}^r| \leq C e^{-\gamma t}.
\]

In the following section we will show exponential decay for $|u^r(t) -\hat u^r(t)|$, the difference of two solutions with different initial data. In particular, this implies exponential decay $|u^r(t) - \bar u^r|$ (by taking $\tilde\mu_0=\bar\mu$), a stronger assertion than what is proved in this section, but what appears in the proposition. Note that the following section does not require this stronger bound and the bound is only updated after the proof in the following section; hence, we avoid circularity.

We now relax the restriction on $\tilde m_0$. We need to show that there exists $T>0$ such that for all $\mu_0^r \in \calP([d])$, the associated solution $(u^r,\mu^r)$ to \eqref{MFG^r} satisfies $|\mu^r(t) - \bar\mu^r| \le k^2<1$. Then, we can apply the previous result on $[T,\iy)$. Fix $\delta>0$ such that $\bar\mu^r_x > \delta$ for all $x\in [d]$ and $r\in (0,r_0)$. By Lemma \ref{lem:val-meas-duality}, the bound for $\bar\mu^r_x>\delta>0$, and since $\mu_0^r - \bar\mu^r\in\calM_0$,
\begin{align*}
C_{2,H} \int_0^\iy e^{-rt} |\Delta (u^r(t) -\bar u^r)|^2 dt &\leq C_{2,H}\delta^{-1} \int_0^\iy e^{-rt} \sum_{x\in [d]}|\Delta_x (u^r(t) -\bar u^r)|^2(\mu_x^r(t) + \bar\mu^r_x) dt\\
&\le \delta^{-1}(\mu_0^r - \bar\mu^r) \cdot (u^r(0) -\bar u^r)\\
&=  \delta^{-1}(\mu_0^r - \bar\mu^r) \cdot (u^r(0) - \langle u^r(0)\rangle -\bar u^r + \langle \bar u^r\rangle).
\end{align*} By Lemmas \ref{lem:b} and \ref{lem:v}, and using the previous line, there exists $C_1>0$ independent of $r$, such that:
\begin{align}\label{bound_dual}
\delta C_{2,H} \int_0^\iy e^{-rt} |\Delta (u^r(t) -\bar u^r)|^2 dt \leq C_1.
\end{align} Set $m^r(\cdot) := \mu^r(\cdot)-\bar\mu^r$ and note that $m^r$ satisfies:
\[
\frac{d}{dt} m_x^r(t) = \sum_{y\in [d]} m^r_y(t) \gamma^*_x(y,\Delta_y u^r(t)) + \hat B_x(t),
\] where
\[
\hat B_x(t) := \sum_{y\in [d]} \bar\mu^r_y [\gamma^*_x(y,\Delta_y u^r(t))-\gamma^*_x(y,\Delta_y \bar u^r)].
\] Fix $t\geq t_1 > 0$. By Lemma \ref{lem:M0}, there exist $c,C_2>0$, depending only on $\mathfrak{a}_l$ and $\mathfrak{a}_u$, such that:
\[
|\mu^r(t) - \bar\mu^r| \le C_2 e^{-c(t-t_1)}|\mu^r(t_1) - \bar\mu^r| +C_2 e^{rt/2} \Big[\int_{t_1}^t e^{-rs} |\Delta (u^r(s) - \bar u^r)|^2\Big]^{1/2}.
\] 
By \eqref{bound_dual} there exist a sequence $\{T^i\}_{i\in\N}\to\iy$, and $t_1^i\in [0,T^i]$ and $t_2^i\in [3 T^i,4T^i]$, such that: 
\[
e^{-rt_j^i} |\Delta (u^r(t_j^i)-\bar u^r)|^2 \le \frac{C_1}{T^i},\quad j=1,2,\quad i\in\N.
\] Otherwise, the integral would be greater than $2C_1/\sqrt{\delta C_{2,H}}$ and so would contradict \eqref{bound_dual}. Fix (for now) an arbitrary $i\in\N$ and denote $T=T^i$ and $t_j=t^i_j$, $j=1,2$. From Lemma \ref{lem:duality}, we get: 
\begin{align*}
    \delta C_{2,H} \int_{t_1}^{t_2} e^{-rt} |\Delta (u^r(t) -\bar u^r)|^2 dt &\le e^{-rt_1} |\Delta (u^r(t_1) -\bar u^r)| |\mu^r (t_1) -\bar\mu^r| \\
    &\qquad + e^{-rt_2} |\Delta (u^r(t_2) -\bar u^r)| |\mu^r (t_2) -\bar\mu^r| \\
    &\le \frac{C}{\sqrt{T}}.
\end{align*} As $t_1\le T< 3T \le t_2 \le 4T$, we get for all $t\in [2T,t_2]$,
\begin{align*}
    |\mu^r(t) - \bar\mu^r| &\le Ce^{-c(2T-t_1)} |\mu^r(t_1) -\bar\mu^r| + Ce^{rt_2/2} \Big[\int_{t_1}^{t_2} e^{-rs} |\Delta (u^r(s) - \bar u^r)|^2 ds \Big]^{1/2} \\
    &\le Ce^{-cT} + Ce^{2rT} T^{-1/4}.
\end{align*} Choosing $T$ large and $r$ sufficiently small therefore yields $|\mu^r(t) - \bar\mu^r|$ sufficiently small for any $t\in [2T,3T]$. 
\qed

\subsection{Proof of (iii).}\label{sec:prop223}
Set $v:= u^r - \tilde u^r$ and set $m:= \mu^r -\tilde\mu^r$ where $(u^r, \mu^r)$ and $(\tilde u^r, \tilde \mu^r)$ are solutions to \eqref{MFG^r} with the initial data $\mu_0$ and $\tilde\mu_0$, respectively. By the mean value theorem, there exist measurable functions $q:=q^{u^r,\tilde u^r}$, $z:= z^{\mu^r, \tilde \mu^r}$, and $\ell := \ell^{u^r,\tilde u^r}$, such that:
\begin{align*}
&H(x, \Delta_x u^r(t)) - H(x,\Delta_x \tilde u^r (t)) = \gamma^*(x, q(t)) \cdot \Delta_x v(t), \\
&F(x,\mu^r(t)) - F(x,\tilde \mu^r(t)) = D^\eta_1 F(x, z(t)) \cdot m(t), \\
&\gamma^*_x(y,\Delta_y u^r(t)) - \gamma^*_x(y,\Delta_y \tilde u^r(t)) = \nabla_p \gamma_x(y,\ell(t)) \cdot \Delta_y v(t).
\end{align*} 
We note that for any fixed time $t_0 \in \R_+$, the mean value theorem asserts that there exists $\lambda_{t_0} \in [0,1]$ such that:
\begin{align*}
q(t_0) &= \lambda_{t_0} [\Delta_x u^r(t_0)] + (1-\lambda_{t_0})[\Delta_x \tilde u^r (t_0)]\\
&\in \Big[[\Delta_x u^r(t_0)]_y \wedge [\Delta_x \tilde u^r (t_0)]_y , [\Delta_x u^r (t_0)]_y \vee [\Delta_x \tilde u^r (t_0)]_y \Big]_{y\in [d]}. 
\end{align*} 

Part (ii) above implies that:
\begin{align}\label{eqn:pls_work}
\sum_{x\in[d]}\left(|q_x(t) - \Delta_x \bar u^r|+|z_x(t)-\bar\mu^r_x|+|\ell_x(t)-\Delta_x\bar u^r|\right) \leq Ce^{-\gamma t}.
\end{align} Note that $(v,m)$ solves the system:
\begin{align*}
\begin{cases}
-\frac{d}{dt} v_x(t)= -rv_x(t) + \gamma^*(x,q(t))\cdot \Delta_x v + D^\eta_1 F(x,z(t))\cdot m(t), \\
\frac{d}{dt} m_x(t)=\sum_{y\in[d]}  m_y(t)\gamma^*_x(y,\Delta_y u^r (t)) + \sum_{y\in[d]} \tilde \mu_y^r \nabla_p \gamma^*_x(y,\ell(t))\cdot \Delta_y v(t),
\end{cases} 
\end{align*} with the initial data $m(0) = \mu_0 -\tilde \mu_0$. That is, $(v,m)$ solves \eqref{lin_MFG_r_around_stat} with:
\begin{align}\notag
A_x(t)&:=\Big( \gamma^*(x,q(t))- \gamma^*(x,\Delta_x  \bar u^r)\Big) \cdot \Delta_x v(t) +\Big( D^\eta_1 F(x,z(t))- D^\eta_1 F(x,\bar\mu^r)\Big)\cdot m(t),\\\notag
B_x(t)&:=\sum_{y\in[d]} m_y(t)\big(\gamma^*_x(y,\Delta_y  u^r(t))-\gamma^*_x(y,\Delta_y  \bar u^r)\big)
\\\notag
&\qquad+\Big( \sum_{y\in[d]} \tilde \mu^r_y(t) \nabla_p\gamma^*_x(y,\ell(t))- \sum_{y\in[d]} \bar\mu^r_y \nabla_p\gamma^*_x(y,\Delta_y  \bar u^r)\Big)\cdot \Delta_y v(t).
\end{align} Since $\gamma^*$, $\nabla_p\gamma^*$, and $D^\eta_1 F$ are Lipschitz, we may use the exponential bound \eqref{eqn:pls_work} along with \eqref{mu_bound_good} and $\eqref{delu_bound_good}$, to get that:
\begin{align*}
    |A(t)| + |B(t)| \leq Ce^{(r/2 - \gamma)t} |\mu_0 - \tilde\mu_0|.
\end{align*} By Lemma \ref{lem:lin_MFG_r_exp_A_B} then, we obtain: 
\begin{align}\label{eqn:good_dog}
    |\Delta (u^r-\tilde u^r)(t)| + |(\mu^r - \tilde\mu^r)(t)| \leq Ce^{(r/2 - \gamma)t} |\mu_0 -\tilde\mu_0|.
\end{align} Integrating the ODE \eqref{eqn:dif_ode}, using the Lipschitz continuity of $H$ and of $F$, and applying \eqref{eqn:good_dog} after taking the limits $T,\TT\to\iy$ yields:
\begin{align*}
    e^{-rt}|u^r(t) - \tilde u^r(t)| \leq C|\mu_0 - \tilde\mu_0| \int_t^\iy e^{-rs/2}e^{-\gamma s} ds
\end{align*} which implies
\begin{align}\label{u_bound_ok}
    |u^r(t) - \tilde u^r (t)| \leq Ce^{(r/2-\gamma)t}|\mu_0-\tilde\mu_0|.
\end{align} Together, \eqref{eqn:good_dog} and \eqref{u_bound_ok} finish the proof of the bound. Note that with this result, we can improve the bound from part (ii) of this proposition from: 
\[
    |\Delta (u^r(t) - \bar u^r)| \leq Ce^{-\gamma t} |\mu_0 - \bar\mu^r|,
\] to:
\[
    |u^r(t) - \bar u^r| \leq Ce^{-\gamma t} |\mu_0 - \bar\mu^r|.
\]

\subsection{Proof of (iv)} Now, we show the relationship between the solution to \eqref{MFG^r} and the discounted MFE.
Let $(u^r,\mu^r)$ be the solution to \eqref{MFG^r} where $\mu_0^r = e_{x_0}$, $x_0\in [d]$. Define $\al^r_{xy}(t):= \gamma^*_y(x,\Delta_x u^r(t))$ and $X_t := X^{\al^r}_t$, with $X_0 = x_0\in [d]$ fixed. We note that by Kolmogorov's equation, 
\[
\frac{d}{dt} \PP_{x_0}(X_t = x)=\sum_{y\in [d]} \PP_{x_0}(X_t = y) \al^r_{yx}(t),\qquad t\in \R_+,
\] and so by the uniqueness of it, $\PP_{x_0}(X_t = x) = \mu^r_x(t)$. Moreover, as with deriving \eqref{eqn:u_true_form}, we obtain that:
\[
u^r_x(0) = \EE\Big[ \int_0^\iy e^{-rt} \sum_{y\in [d]} \1_{\{X_{t} = y\}}(f(y,\al^r_y(t)) + F(y,\mu^r(t))) dt \Big] = J_r(\al^r,\mu^r),
\] with equality because $\al^r_{xy}$ is defined to be $\gamma^*_y(x,\Delta_x u^r(t))$, and in addition, for all $\al\in\calA$,
\[
J_r(\al^r,\mu^r) \leq J_r(\al,\mu^r).
\] This implies:
\[
\argmin_{\al\in\calA} J_r(\al,\mu^r) = J_r(\al^r,\mu^r). 
\]

 Conversely, suppose that $(\tilde \al^r,\tilde\mu^r)$ is a discounted MFE and define:
\[
\tilde u^r_x(t) := e^{rt} J_r(t,x,\tilde \al^r,\tilde\mu^r) := e^{rt} \EE\Big[ \int_t^\iy e^{-rs} \sum_{y\in [d]} \1_{\{X_{s^-} = y\}}(f(y,\tilde \al^r_y(s)) + F(y,\tilde \mu^r(s))) ds \Big| X_t = x\Big].
\] 
Note that $J_r$ is the value function of the MFG. Hence, $\tilde u$ satisfies,

\begin{align*}
    -\frac{d}{dt} \tilde u^r_x(t) &= -r \tilde u^r_x(t)+\tilde\al^r_x (t) \cdot \Delta_x \tilde u^r(t) +f(x,\tilde\al^r(t)) + F(x,\tilde\mu^r(t)),\qquad t\in\R_+.
\end{align*} 
To end the proof, it suffices to show that $\tilde \al^r_{xy}(t) = \gamma^*_y(x,\Delta_x \tilde u^r(t))$. Indeed, in this case, the previous display shows that $\tilde u^r$ satisfies the value equation in \eqref{MFG^r}; meanwhile, the fact that $\tilde \mu^r_x(t) = \PP(X_t = x)$ implies: 
\begin{equation}\label{eqn:MFE_tilde_u_value}
\frac{d}{dt} \tilde \mu^r_x(t) = \sum_{y\in [d]} \tilde \mu^r_y(t) \al^r_{yx} (t),\qquad t\in\R_+,  
\end{equation}
and so we would have that $\tilde\mu^r$ satisfies the Kolmogorov equation in \eqref{MFG^r}. Hence, $(\tilde u^r,\tilde\mu^r)$ satisfies \eqref{MFG^r}. Thus, we turn to showing $\tilde \al^r_{xy}(t) = \gamma^*_y(x,\Delta_x \tilde u^r(t))$.

By way of contradiction, assume that there exists $z\in [d]$ such that $\tilde \al_z^r(t) \neq \gamma^*(z,\Delta_z \tilde u^r(t))$ for some positive Lebesgue-measure set of values of $t\in\R_+$. Denote this set by $\calT$. By uniqueness of the minimizer $\gamma^*$ for the Hamiltonian $H$ and by \eqref{eqn:MFE_tilde_u_value}, there exists $\eps>0$ such that for any $t\in\calT$:
\[
-\frac{d}{dt}\tilde u^r_z(t)+r\tilde u^r_z(t) - \eps \geq  \gamma^*(z,\Delta_z \tilde u^r(t)) \cdot \Delta_z \tilde u^r +f(z,\gamma^*(z,\Delta_z \tilde u^r(t)) + F(z,\tilde\mu^r(t)),
\] and for any $x\in[d]$ and any $t\in\R_+$,
\[
-\frac{d}{dt}\tilde u^r_x(t)+r\tilde u^r_x(t) \geq  \gamma^*(x,\Delta_x \tilde u^r(t)) \cdot \Delta_x \tilde u^r +f(x,\gamma^*(x,\Delta_x \tilde u^r(t)) + F(x,\tilde\mu^r(t)).
\] Let $X^\gamma_t$ be a jump process on $[d]$ with rates given by $\gamma_y^*(x,\Delta_x \tilde u^r(t))$ and with starting state $ X^\gamma_0 = x_0\in [d]$.

Applying It\^o's lemma to $e^{-rt}\tilde u^r_{ X^\gamma_t}(t)$, taking expectations, and using the previous inequalities,
\begin{align*}
    -\tilde u^r_{x_0}(0) &= \EE \int_0^\iy e^{-rt} \sum_{y,y\neq z} \1_{\{ X^\gamma_t=y\}} \Big(\gamma^*(y,\Delta_y \tilde u^r(t))\cdot \Delta_y \tilde u^r(t) -r\tilde u^r_y(t) + \frac{d}{dt} \tilde u^r_y(t)\Big) dt \\
    &\quad + \EE \int_0^\iy e^{-rt}  \1_{\{ X^\gamma_t=z\}} \Big(\gamma^*(z,\Delta_z \tilde u^r(t))\cdot \Delta_z \tilde u^r(t) -r\tilde u^r_z(t) + \frac{d}{dt} \tilde u^r_z(t)\Big) dt \\
    &\leq  -\eps \int_{\calT} e^{-rt} \EE[\1_{\{ X^\gamma_t = z\}}]dt \\
    &\quad - \EE \int_0^\iy e^{-rt}  \Big( f( X^\gamma_t,\gamma^*( X^\gamma_t,\Delta_{ X^\gamma_t}\tilde u^r(t))) + F( X^\gamma_t,\tilde\mu^r(t)) \Big) dt\\
    &=  -\eps \int_{\calT} e^{-rt} \EE[\1_{\{ X^\gamma_t = z\}}]dt - J_r(\gamma^*(\cdot,\Delta_{\cdot} \tilde u^r),\tilde\mu^r).
\end{align*} Since the rates of the chain $(X^\gamma_t)_{t\geq 0}$ are in the compact set $\A$ that is bounded away from zero, 
\[
\int_{\calT} e^{-rt} \EE[\1_{\{ X^\gamma_t = z\}}] dt >0,
\] and recalling the definition of $\tilde u^r$, this implies that:
\[
J_r(\gamma^*(\cdot,\Delta_{\cdot} \tilde u^r),\tilde\mu^r) < \tilde u^r_{x_0}(0)= J_r(\tilde \al^r , \tilde\mu^r).
\] However, this contradicts the minimality of $\tilde\al^r$ and so we must instead have that $\tilde \al_{xy}^r(t) =\gamma^*_y(x,\Delta_x \tilde u^r(t))$ for all $x,y\in [d]$ and $t\in\R_+$. 
\qed

\section{Proof of Proposition \ref{prop:MEr}}\label{sec:proof_MEr}

In order to prove the regularity of the derivative of the master equation, we need a linearized system around the discounted MFG system, which we analyze in the following subsection.

\subsection{Preliminary Results: Linearized System around the discounted MFG System} We now introduce a linearized system \eqref{lin_MFG_r_around_MFG^r} centered around the discounted MFG system \eqref{MFG^r} and establish a sensitivity result via Lemma \ref{lemma:lipschitz_grad}. This lemma serves us in the sequel in establishing the regularity of the solution of the discounted master equation.

Given the solution $(u^r,\mu^r)$ to \eqref{MFG^r} with $\mu^r(0) = \mu_0$ and $m_0\in\calM_0$, consider the linearized system around $( u^r,\mu^r)$, defined for $t\in\R_+$:
\begin{align}
\label{lin_MFG_r_around_MFG^r}
\begin{cases}
-\frac{d}{dt}v_x(t)=-rv_x(t) + \gamma^*(x,\Delta_x  u^r(t))\cdot \Delta_x v(t) + D^\eta_1 F(x,\mu^r(t))\cdot m(t),\\
\frac{d}{dt}m_x(t)=\sum_{y\in[d]} m_y(t)\gamma^*_x(y,\Delta_y  u^r(t)) + \sum_{y\in[d]} \mu^r_y(t) \nabla_p\gamma^*_x(y,\Delta_y  u^r(t))\cdot \Delta_y v(t),\\
m(0)=m_{0},\qquad\text{$v$ is bounded}.
\end{cases}
\end{align}
To obtain the existence and uniqueness of a solution to the latter system, we use the nice manipulation observed by \cite{car-por} that \eqref{lin_MFG_r_around_MFG^r} actually is equal to \eqref{lin_MFG_r_around_stat} with the choice:
\begin{align}\notag
A_x(t)&:=-\Big( \gamma^*(x,\Delta_x  u^r(t))- \gamma^*(x,\Delta_x  \bar u^r)\Big)\cdot \Delta_x v(t) +\Big( D^\eta_1 F(x,\mu^r(t))- D^\eta_1 F(x,\bar\mu^r)\Big)\cdot m(t),\\\notag
B_x(t)&:=-\sum_{y\in[d]} m_y(t)\big(\gamma^*_x(y,\Delta_y  u^r(t))-\gamma^*_x(y,\Delta_y  \bar u^r)\big)
\\\notag
&\qquad-\Big( \sum_{y\in[d]} \mu^r_y(t) \nabla_p\gamma^*_x(y,\Delta_y  u^r(t))- \sum_{y\in[d]} \bar\mu^r_y \nabla_p\gamma^*_x(y,\Delta_y  \bar u^r)\Big)\cdot \Delta_y v(t).
\end{align}
In fact, We can show that these $A$ and $B$ have exponential decay and so Proposition \ref{prop:lin_MFG_r_exi} holds. Indeed, as in the proof of \eqref{bound_m_Del_v}, one may show here as well that there are constants $C, r_0>0$, independent of $m_0$, such that for any $t\in\R_+$ and any $r\in (0,r_0)$,
\begin{align}\notag
|m(t)|+|\Deltadot v(t)|\le C|m_0|e^{r t/2}.
\end{align}

With this in mind, we can replicate the proof in Section \ref{sec:prop223} in order to obtain the following corollary. The corollary will be helpful in bounding the solutions of the linearized system \eqref{lin_MFG_r_around_MFG^r} by an exponential decay and the initial data. In particular, the independence from the discount $r$ will be useful in the proof of Proposition \ref{prop:MEr}.

\begin{corollary}\label{cor:MFG^r2}
The system \eqref{lin_MFG_r_around_MFG^r} admits a unique solution. Moreover, there exist $\theta,r_0>0$, and $C>0$, independent of $m_0$, such that for any $r\in(0,r_0)$ and $t\in\R_+$,
\begin{align}\notag
|m(t)|+|\Deltadot v(t)| + |v(t)| \le Ce^{-\theta t}|m_0|.
\end{align}
\end{corollary}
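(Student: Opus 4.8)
The plan is to follow the scheme of Section~\ref{sec:prop223}, using the identification of \eqref{lin_MFG_r_around_MFG^r} with \eqref{lin_MFG_r_around_stat} for the $A,B$ displayed above. First I would record an a~priori growth bound: every solution $(v,m)$ of \eqref{lin_MFG_r_around_MFG^r} satisfies $|m(t)|+|\Deltadot v(t)|\le C|m_0|e^{rt/2}$ for all $t\in\R_+$ and all $r\in(0,r_0)$, with $C$ independent of $r$ and $m_0$. This is obtained exactly as in the proof of \eqref{bound_m_Del_v}, using Lemmas~\ref{lem:M0} and \ref{lem:v} for the two ODEs and the linearized duality Lemma~\ref{lem:duality}; the point is that after the reduction the system is centered at the \emph{stationary} pair $(\bar u^r,\bar\mu^r)$, so $\bar\mu^r$ is bounded away from~$0$ uniformly in $r$ (Proposition~\ref{prop:statDisc_erg_mfg}), which is what makes the duality estimate available. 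Plugging this bound into the formulas for $A$ and $B$, together with the exponential convergence $|u^r(t)-\bar u^r|+|\mu^r(t)-\bar\mu^r|\le Ce^{-\gamma t}$ from Proposition~\ref{prop:MFG}(ii) and the Lipschitz continuity of $\gamma^*$, of $\nabla_p\gamma^*=D^2_{pp}H$, and of $D^\eta_1 F$ (Assumptions $(A_2)$, $(A_5)$, and $(A_3)$), gives
\[
|A(t)|+|B(t)|\le C|m_0|\,e^{(r/2-\gamma)t}\le C|m_0|\,e^{-(\gamma/2)t},\qquad t\in\R_+,
\]
once $r_0<\gamma$. Existence and uniqueness of a solution then follow from Proposition~\ref{prop:lin_MFG_r_exi} by the finite-horizon approximation used in its proof: the finite-horizon systems are linear and solvable as in \cite{bay-coh2019}, the uniform-in-$T$ bounds come from the same duality estimates, and passing $T\to\infty$ produces the solution on $\R_+$. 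Uniqueness is also immediate from the a~priori bound, which is proportional to $|m_0|$: a solution with $m_0=0$ has $m\equiv0$ and $\Deltadot v\equiv0$, and then $v\equiv0$ since $v$ is bounded.

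For the decay estimate I would apply Lemma~\ref{lem:lin_MFG_r_exp_A_B} with $C_1=C|m_0|$ and rate $\gamma/2$, obtaining
\[
|m(t)|+|\Deltadot v(t)|\le C|m_0|\bigl(1+|m_0|+t\bigr)e^{-(\lambda\wedge(\gamma/2))t},\qquad t\in\R_+,
\]
with $\lambda,r_0$ from Lemma~\ref{lem:lin_MFG_r_exp}. Using the linearity of \eqref{lin_MFG_r_around_MFG^r} in $m_0$ to normalize $|m_0|=1$, one absorbs the polynomial factor $2+t$ into the exponential (shrinking the rate slightly) and rescales, yielding $|m(t)|+|\Deltadot v(t)|\le C|m_0|e^{-\theta t}$ for some $\theta\in(0,\lambda\wedge(\gamma/2))$ and $C$ independent of $r$ and $m_0$. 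Finally, to bound $|v(t)|$: since $v$ is bounded and $r>0$, $e^{-rT}v(T)\to0$, so integrating the identity $\tfrac{d}{dt}\bigl(e^{-rt}v_x(t)\bigr)=-e^{-rt}\bigl(\gamma^*(x,\Delta_x u^r(t))\cdot\Delta_x v(t)+D^\eta_1 F(x,\mu^r(t))\cdot m(t)\bigr)$ from $t$ to $\infty$ gives
\[
e^{-rt}v_x(t)=\int_t^\infty e^{-rs}\bigl(\gamma^*(x,\Delta_x u^r(s))\cdot\Delta_x v(s)+D^\eta_1 F(x,\mu^r(s))\cdot m(s)\bigr)\,ds.
\]
On the uniform-in-$r$ compact set of relevant arguments (the vectors $\Delta_x u^r=\Delta_x(u^r-\bar u^r)+\Delta_x\bar u^r$ are uniformly bounded by Propositions~\ref{prop:statDisc_erg_mfg} and \ref{prop:MFG}), both $\gamma^*$ and $D^\eta_1 F$ are bounded, so the integrand is $\le C|m_0|e^{-rs}e^{-\theta s}$, whence $|v(t)|\le \tfrac{C}{r+\theta}|m_0|e^{-\theta t}\le\tfrac{C}{\theta}|m_0|e^{-\theta t}$, the constant being uniform in $r$ since $r+\theta\ge\theta$. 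Adding the three bounds gives the corollary.

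The main obstacle I expect is the a~priori growth bound with a constant independent of $r$. One cannot run a duality argument directly on \eqref{lin_MFG_r_around_MFG^r}, because its center $\mu^r(t)$ may degenerate near $\partial\calP([d])$ (for $\mu_0$ close to the boundary), so the quadratic form controlled by the chain-rule identity need not dominate $|\Deltadot v(t)|^2$; this is precisely why the reformulation as \eqref{lin_MFG_r_around_stat} — whose center $\bar\mu^r$ is uniformly nondegenerate — is essential, and why the care taken in Proposition~\ref{prop:lin_MFG_r_exi} and Lemma~\ref{lem:lin_MFG_r_exp_A_B} in handling the source terms $A,B$ is needed. Once these uniform bounds are in hand, the rest is a routine adaptation of Section~\ref{sec:prop223} together with the $v$-integration above, the only remaining bookkeeping being to keep all constants and the rate $\theta$ independent of $r$ and $m_0$.
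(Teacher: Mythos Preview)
Your proposal is correct and follows essentially the same route as the paper: rewrite \eqref{lin_MFG_r_around_MFG^r} as \eqref{lin_MFG_r_around_stat} with the displayed $A,B$, obtain the a~priori growth bound $|m(t)|+|\Deltadot v(t)|\le C|m_0|e^{rt/2}$ as in the proof of \eqref{bound_m_Del_v}, combine it with Proposition~\ref{prop:MFG}(ii) to get exponential decay of $A,B$, invoke Proposition~\ref{prop:lin_MFG_r_exi} for well-posedness, then replicate Section~\ref{sec:prop223} (Lemma~\ref{lem:lin_MFG_r_exp_A_B} plus the homogeneity trick and the integral representation of $v$). One minor remark: your explanation in the last paragraph slightly overstates the obstacle --- even for the system centered at $(u^r,\mu^r)$ the chain-rule duality still gives control of $\int e^{-rs}\sum_y\mu^r_y(s)|\Delta_y v(s)|^2\,ds$, and this \emph{weighted} integral is exactly what the Lemma~\ref{lem:M0} estimate for $m$ needs (the source in the $m$-equation carries the same $\mu^r_y$ weights), so the a~priori bound does not actually hinge on nondegeneracy of $\mu^r$; but the reformulation you use is the paper's and works cleanly, so this is only a point of exposition, not a gap.
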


We now begin to make more concrete the relationship between the solution to the linearized system \eqref{lin_MFG_r_around_MFG^r} and differences of solutions to the MFG system \eqref{MFG^r}. A preliminary result that establishes a second-order bound follows.

\begin{lemma}\label{lemma:lipschitz_grad}
Let $(u^r,\mu^r)$ and $(\hat u^r,\hat \mu^r)$ be solutions of \eqref{MFG^r} with the initial data $\mu^r(0)=\mu^r_0$ and $\hat \mu^r(0)=\hat \mu^r_0$, respectively. Also, let $(v,m)$ be the solution of \eqref{lin_MFG_r_around_MFG^r}, with the initial state $m(0)=\mu^r_0-\hat\mu^r_0$. 
Then for any $r >0$, there is a constant $C_r>0$ that may depend on $r$, but is independent of the initial data $\mu^r_0$ and $\hat\mu^r_0$, such that:
\begin{align}\label{u_mu_lipschitz_deriv}
\sup_{t\in \R_+}\left(|\hat\mu^r(t)-\mu^r(t)-m(t)|+|\hat u^r(t)-u^r(t)-v(t)|\right)\le C_r|\mu^r_0-\hat\mu^r_0|^2.
\end{align}
\end{lemma}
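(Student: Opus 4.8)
The plan is to set up the differences $w := \hat u^r - u^r - v$ and $n := \hat\mu^r - \mu^r - m$ and show they satisfy a version of the linearized system \eqref{lin_MFG_r_around_MFG^r} with zero initial data and a forcing term that is quadratic in $|\mu_0^r - \hat\mu_0^r|$; then a stability estimate for that linear system yields the bound. First I would write, using \eqref{MFG^r} for both $(u^r,\mu^r)$ and $(\hat u^r, \hat\mu^r)$ together with \eqref{lin_MFG_r_around_MFG^r} for $(v,m)$, the ODE satisfied by $(w,n)$. For the value equation, subtract and perform a second-order Taylor expansion of $p\mapsto H(x,p)$ about $\Delta_x u^r(t)$; the first-order term is exactly $\gamma^*(x,\Delta_x u^r(t))\cdot\Delta_x(\hat u^r - u^r)$ by \eqref{gamma_H}, which combines with the $v$-equation to give $\gamma^*(x,\Delta_x u^r(t))\cdot\Delta_x w$ plus a remainder controlled by $\|D^2_{pp}H\|_\infty |\Delta_x(\hat u^r - u^r)(t)|^2$; similarly expanding $F(x,\cdot)$ about $\mu^r(t)$ and using ($A_3$) (Lipschitzness of $D^\eta F$) produces $D^\eta_1 F(x,\mu^r(t))\cdot n$ plus a remainder $O(|\hat\mu^r(t)-\mu^r(t)|^2)$. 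For the Kolmogorov equation, expand $\gamma^*_x(y,\cdot)$ about $\Delta_y u^r(t)$ and also linearize the product $\hat\mu^r_y \gamma^*_x(y,\Delta_y\hat u^r)$; here the first-order terms reproduce the structure of the $m$-equation in $n$ and $w$, with a remainder that is bilinear/quadratic in $|\hat u^r - u^r|$, $|\hat\mu^r - \mu^r|$ and uses Lipschitzness of $\nabla_p\gamma^*$ (from ($A_5$)). Thus $(w,n)$ solves \eqref{lin_MFG_r_around_MFG^r} (equivalently \eqref{lin_MFG_r_around_stat} via the reduction recalled before Corollary \ref{cor:MFG^r2}) with $m_0 = 0$ and forcing terms $A(t), B(t)$.

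The second step is to bound these forcing terms. By part (iii) of Proposition \ref{prop:MFG}, specifically \eqref{eqn:good_dog} and \eqref{u_bound_ok}, we have $|\Delta(\hat u^r - u^r)(t)| + |(\hat\mu^r - \mu^r)(t)| + |(\hat u^r - u^r)(t)| \le Ce^{(r/2-\gamma)t}|\mu_0^r - \hat\mu_0^r|$ with $C$ independent of the initial data. Squaring, each remainder term is bounded by $C e^{(r - 2\gamma)t}|\mu_0^r - \hat\mu_0^r|^2$. Choosing $r_0$ small enough that $r_0 < 2\gamma$ (so that $r - 2\gamma < 0$), we get $|A(t)| + |B(t)| \le C e^{-\theta' t}|\mu_0^r - \hat\mu_0^r|^2$ for some $\theta' > 0$ and $C$ independent of the initial data (and of $r$, though we do not need uniformity in $r$ here).

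The final step is to invoke the stability estimate for the linearized system. By Lemma \ref{lem:lin_MFG_r_exp_A_B} applied to $(w,n)$ with $m_0 = 0$ and the exponentially decaying forcing just established, we obtain $|n(t)| + |\Deltadot w(t)| \le C|\mu_0^r - \hat\mu_0^r|^2 (1 + t) e^{-(\lambda\wedge\theta')t}$ for $t\in\R_+$, which gives a uniform-in-$t$ bound $\sup_t |n(t)| \le C|\mu_0^r - \hat\mu_0^r|^2$, and likewise for $|\Deltadot w(t)|$; then integrating the ODE that $e^{-rt}w(t)$ satisfies over $[t,\infty)$ — exactly as in the passage deriving \eqref{u_bound_ok} from \eqref{eqn:good_dog} — upgrades the $\Delta w$ control to a bound on $|w(t)|$ itself, at the cost of a constant $C_r$ that may depend on $r$ through the factor $r^{-1}$. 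This yields \eqref{u_mu_lipschitz_deriv}. The main obstacle is the bookkeeping in the first step: one must carefully organize the second-order Taylor remainders so that every leftover term is genuinely quadratic in the first-order differences (no lingering linear-in-$(w,n)$ pieces outside the linear operator, and no first-order-difference terms that are not squared), which requires that the first-order parts of the expansions of $H$, $F$, $\gamma^*$ and the product $\hat\mu^r\gamma^*(\cdot,\Delta\hat u^r)$ line up precisely with the coefficients appearing in \eqref{lin_MFG_r_around_MFG^r}; once this algebraic matching is done, the estimates are routine given Proposition \ref{prop:MFG}(iii) and Lemma \ref{lem:lin_MFG_r_exp_A_B}.
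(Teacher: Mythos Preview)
Your setup (first step) and the quadratic bound on the forcing terms (second step) are correct and match the paper. The gap is in the final step: Lemma~\ref{lem:lin_MFG_r_exp_A_B} is stated and proved for the linearized system \eqref{lin_MFG_r_around_stat} centered at the \emph{stationary} pair $(\bar u^r,\bar\mu^r)$, whereas $(w,n)$ solves the forced system \eqref{lin_MFG_r_around_u} centered at $(u^r,\mu^r)$. The ``reduction'' you invoke (recalled just before Corollary~\ref{cor:MFG^r2}) does not send the forcing $A,B$ of \eqref{lin_MFG_r_around_u} to the \emph{same} $A,B$ in \eqref{lin_MFG_r_around_stat}; it adds extra terms of the form $(\gamma^*(x,\Delta_x u^r(t))-\gamma^*(x,\Delta_x\bar u^r))\cdot\Delta_x w(t)$ and $(D^\eta_1 F(x,\mu^r(t))-D^\eta_1 F(x,\bar\mu^r))\cdot n(t)$ (and analogously for $B$). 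These carry a coefficient $\le Ce^{-\gamma t}$ by Proposition~\ref{prop:MFG}(ii), but they are linear in the unknowns $(w,n)$; with only the a~priori first-order bound $|\Delta w(t)|+|n(t)|\le Ce^{-\theta_1 t}|\mu_0-\hat\mu_0|$ available (from Proposition~\ref{prop:MFG}(iii) and Corollary~\ref{cor:MFG^r2}), the total forcing in \eqref{lin_MFG_r_around_stat} is only first-order in $|\mu_0-\hat\mu_0|$, so Lemma~\ref{lem:lin_MFG_r_exp_A_B} would return a first-order bound rather than the second-order one you need.

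The fix is to establish an analogue of Lemma~\ref{lem:lin_MFG_r_exp_A_B} directly for \eqref{lin_MFG_r_around_u}: rerun its proof using Corollary~\ref{cor:MFG^r2} in place of Lemma~\ref{lem:lin_MFG_r_exp} for the homogeneous part. The paper in fact carries out exactly this argument later, inside the proof of Proposition~\ref{prop:linearized_lipschitz}; with that analogue available your route becomes valid (and even yields a uniform-in-$r$ constant for $|n|+|\Delta w|$, the $r$-dependence entering only through the final $r^{-1}$ when integrating to control $|w|$). The paper's own proof of Lemma~\ref{lemma:lipschitz_grad} avoids this detour entirely: it works directly with \eqref{lin_MFG_r_around_u}, combining Lemma~\ref{lem:v}, an auxiliary backward test function $\hat w$, and a duality computation in the spirit of Lemma~\ref{lem:duality} to close the estimate, accepting the $r$-dependence through a time $T=T_r$ at which $|\rho(\cdot)|$ attains its supremum.
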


For the proof of this lemma as well as for future reference, we need the following linearized system.
\begin{align}
\begin{cases}
-\frac{d}{dt}w_x(t)=-rw_x(t) + \gamma^*(x,\Delta_x u^r(t))\cdot \Delta_x w(t) + D^\eta_1 F(x,\mu^r(t))\cdot \rho(t) + A_x(t),\\
\frac{d}{dt}\rho_x(t)=\sum_{y\in[d]} \rho_y(t)\gamma^*_x(y,\Delta_y  u^r(t)) + \sum_{y\in[d]} \mu^r_y(t) \nabla_p\gamma^*_x(y,\Delta_y  u^r(t))\cdot \Delta_y w(t)+B_x(t),\\
\rho(0)=\rho_{0},\qquad\text{$w$ is bounded}. \label{lin_MFG_r_around_u}
\end{cases} 
\end{align} 

\begin{proof}[Proof of Lemma \ref{lemma:lipschitz_grad}]
Arithmetic implies that $w(t):=\hat{u}^r(t) - u^r(t) - v(t)$ and $\rho(t):= \hat{\mu}^r(t)-\mu^r(t)-m(t)$  solve the above linearized system \eqref{lin_MFG_r_around_u} 
with the values:
\begin{align*}
A_x(t) := & H(x,\Delta_x\hat{u}^r(t)) - H(x,\Delta_x u^r(t)) - \gamma^*(x,\Delta_x u^r(t))\cdot \Delta_x(\hat{u}^r(t)-u^r(t))\\
&+F(x,\hat{\mu}^r(t)) - F(x,\mu^r(t)) - D^\eta_1 F(x,\mu^r(t))\cdot (\hat{\mu}^r(t) - \mu^r(t)),
\end{align*} and
\begin{align*}
    B_x(t) := \sum_{y\in [d]} \Big[ \hat{\mu}^r_y(t)[&\gamma^*_x(y,\Delta_y \hat{u}^r(t))-\gamma^*_x(y,\Delta_y u^r(t)) - \nabla_p \gamma^*_x(y,\Delta_y u^r(t))\cdot \Delta_y (\hat u^r(t) - u^r(t))]\\
    &+ (\hat\mu_y(t)-\mu_y^r(t)) [\nabla_p \gamma^*_x(y,\Delta_y u^r(t)) \cdot \Delta_y (\hat{u}^r(t) - u^r(t))]\Big].
\end{align*} Fix $0\leq t_1<t_2\leq \iy$. We briefly consider $|B(t)|$ by itself. Using Taylor's theorem to generate a second order bound for $\gamma^*$, Young's inequality, and with $C>0$ from Proposition \ref{prop:MFG},
\begin{align*}
    |B(t)|  &\leq C_{L,\gamma} \Big(\sum_{y\in [d]} |\Delta_y (\hat u^r(t) - u^r(t))|^2  
     +\sum_{y\in [d]} |\hat\mu_y(t) - \mu_y(t)||\Delta_y (\hat u^r(t) - u^r(t))|\Big) \\\notag
    &\le C_{L,\gamma} \Big( \frac{3}{2} \sum_{y\in [d]} |\Delta_y (\hat u^r(t) - u^r(t))|^2   +\frac{1}{2} \sum_{y\in [d]} |\hat\mu_y(t) - \mu_y(t)|^2 \Big) \\\notag
    &\le 2C_{L,\gamma} C |\hat\mu_0 - \mu_0|^2. \numberthis{\label{B_integral}}
\end{align*} Throughout the proof $C>0$ may change from one line to the next, but will remain independent of $r>0$, and the data $\mu_0^r$ and $\hat\mu_0^r$. We can similarly bound $A$ using a second-order bound and Proposition \ref{prop:MFG}:
\begin{align*}
    |A(t)| &\le (C_{L,H}+C_{L,F}) \big[\max_{x\in [d]}|\Delta_x (\hat u^r - u^r)(t)|^2 + |\hat\mu(t) - \mu(t)|^2 \big] \\\notag
    &\le C (C_{L,H}+C_{L,F}) |\hat \mu_0 - \mu_0|^2. \numberthis{\label{A_integral}}
\end{align*}

Applying Lemma \ref{lem:v} to $w(t,x)$ on $[t_1,t_2]$, using the boundedness of $D^\eta F$, taking a supremum bound for $e^{-rt}\rho(t)$, applying \eqref{A_integral}, and taking $t_2\to\iy$ gives:
\[
e^{-rt_1} |\Delta w(t_1)|  \leq C\big(\sup_{t\in [t_1,\iy)} e^{-rt}|\rho(t)| + e^{-rt_1}|m_0|^2\big).
\] By Proposition \ref{prop:MFG} (ii) and Corollary \ref{cor:MFG^r2}, $|\rho(t)| \leq Ce^{-\theta t}$ for some $\theta>0$. So there exists $T=T_r>0$ so that, \begin{align}\label{eq:rho}
    |\rho(T)| = \sup_{t\in\R_+} |\rho (t)|.
\end{align} Note that this $T=T_r$ will appear in the final constant and this is where the dependence on $r>0$ comes from. Combining this with the above display and multiplying by $e^{-rt_1}$ makes the right hand side independent of $t_1$. Then, taking the supremum over all $t_1\in\R_+$ yields:
\begin{align}\label{sq_bound:sup_w}
    \sup_{t_1\in\R_+}|\Delta w(t_1)| \leq C\big( |\rho (T)| + |m_0|^2\big).
\end{align}

Let $\hat w$ be the solution to:
\[
-\frac{d}{dt} \hat w(t,x) = -r\hat w(t,x) + \Delta_x \hat w(t) \cdot \gamma^*(x,\Delta_x u^r(t)), \quad\text{  } t\in [t_1,t_2], \quad \hat w(t_2,x) = \xi_x,
\] where $\xi \in\R^d$ is arbitrary. Applying Lemma \ref{lem:v} to $\hat w$ gives:
\begin{align*}
    |\Delta \hat w(t)| \leq Ce^{-c(t_2-t)} |\Delta \xi|.
\end{align*} Taking the supremum,
\begin{align}\label{lem:46_hat}
    \sup_{t\in [t_1,t_2]}|\Delta \hat w(t)| \leq C|\Delta \xi|,
\end{align} where $C$ is still independent of $t_1, t_2$. We can differentiate $e^{-rt}\hat w(t)\cdot \rho(t)$ and using \eqref{lin_MFG_r_around_u},
\begin{align*}
    \frac{d}{dt} \Big[e^{-rt} &\sum_{x\in [d]} \hat w_x(t) \rho_x(t) \Big] \\ \notag
    &= e^{-rt} \Big[\sum_{x\in [d]} \rho_x(t)\big(-\Delta_x \hat w(t) \cdot \gamma^*(x,\Delta_x u^r(t))\big)\\ \notag
    &\qquad +\hat w_x(t)\big(\sum_{y\in [d]} \rho_y(t) \gamma^*_x(y,\Delta_y u^r(t)) + \mu_y^r(t)\nabla_p\gamma^*_x(y,\Delta_y u^r(t)) \cdot \Delta_y w(t) +B_x(t)\big)\Big].
\end{align*} We can use the fact that $\sum_{x\in [d]} \gamma^*_x(y,\Delta_y u^r(t))=0$ to see that the first sums in each line after the equality have an opposite sign and so cancel with one another. We can then integrate both sides from $t_1$ to $t_2$, and rearrange terms,
\begin{align*}
    \int_{t_1}^{t_2} e^{-rt} &\sum_{x,y\in [d]} \hat w_x(t) \mu_y(t) \nabla_p \gamma^*_x(y,\Delta_y u^r(t))\cdot \Delta_y w(t) dt \\
    &= \sum_{x\in [d]} e^{-rt_2}  \xi_x\rho_x(t_2) - e^{-rt_1}\hat w_x(t_1)\rho_x(t_1) - \sum_{x\in [d]} \int_{t_1}^{t_2} e^{-rt} [\hat w_x(t)-\langle \hat w(t)\rangle_x] B_x(t) dt,
\end{align*} where for the last term we used the fact that $B(t)\in\calM_0$ in order to insert $\langle\hat w(t)\rangle$. Denote $G_{x,z}(y):= -\pl_{p_z} \gamma^*_x(y,\Delta_y u^r(t)) = -\pl^2_{p_x,p_z} H(y,\Delta_y u^r(t))$ and let $G(y)$ be the $d\times d$ matrix with entries $[G_{x,z}(y)]_{x,z\in [d]}$. We note that by the assumptions on $H$, $G(y)$ is symmetric and positive definite. We note that the integrand on the left-hand side of the above display can be written:
\[
-\sum_{y\in [d]} \mu^r_y(t) (\Delta_y \hat w(t))^T G(y) \Delta_y w(t).
\] With $t_1=0$, $\rho(t_1)=0$. Rearranging, using these observations, \eqref{lem:46_hat}, and \eqref{B_integral},
\begin{align*}
    e^{-rt_2} |\rho(t_2)\cdot \xi| &\leq C|\xi| \int_0^{t_2} e^{-rt} \sum_{y\in [d]} \mu^r_y(t) |G(y)\Delta_y w(t)| dt + C|\xi||\mu_0 - \hat\mu_0|^2
\end{align*} Using Jensen's inequality (for the product measure $\mu^r(t) \otimes e^{-rt} dt$ and the function $x\mapsto x^2$), boundedness of $G$, and taking the supremum over all $|\xi| =1$,
\begin{align}\label{sq_bound:rho_like}
    e^{-rt_2} |\rho(t_2)| &\leq C\Big(\int_0^{t_2} e^{-rt} \sum_{y\in [d]} \mu^r_y(t) |\Delta_y w(t)|^2 dt\Big)^{1/2} + C|\mu_0 - \hat\mu_0|^2.
\end{align} In order to treat the integral on the right-hand side, we mimic the proof of Lemma \ref{lem:duality}, this time keeping $A$, and find that:
\begin{align*}
&C^{-1} \int_{t_1}^{t_2} e^{-rt} \sum_{y\in [d]} \mu_y^r(t) |\Delta_y w(t)|^2 dt\\
&\quad\leq -e^{-rt} \rho(t) \cdot w(t)\Big|_{t_1}^{t_2} + \sum_{x\in [d]} \int_{t_1}^{t_2} e^{-rt} (w_x(t)B_x(t) - \rho_x(t) A_x(t)) dt. 
\end{align*} 
Take $t_1=0$ and $t_2 =\iy$, use \eqref{B_integral}, \eqref{A_integral}, the facts $\rho(t),B(t)\in\calM_0$, and Lemma \ref{lem:b}, then:
\begin{align}\label{sq_bound:duality}
C^{-1} \int_{0}^{\iy} e^{-rt} \sum_{y\in [d]} \mu_y^r(t) |\Delta_y w(t)|^2 dt \leq \sup_{t\in\R_+} |\Delta w(t)| |\mu_0-\hat\mu_0|^2 + |\rho (T)||\mu_0-\hat\mu_0|^2  \end{align} 
where we recall that $T$ is chosen such that $|\rho(T)|$ attains the supremum in \eqref{eq:rho}. In \eqref{sq_bound:rho_like}, we note that the right-hand side is bounded above by the same quantity, except that here the integral upper bound is $\iy$; so, plugging in \eqref{sq_bound:duality} to \eqref{sq_bound:rho_like} gives:
\begin{align*}
    e^{-rt_2} |\rho (t_2)| \leq C\big[\big(\sup_{t\in\R_+}|\Delta w(t)| + |\rho(T)| \big)^{1/2}|\mu_0-\hat\mu_0| + |\mu_0 -\hat\mu_0|^2\big].
\end{align*} In particular, this holds for $t_2 = T$. Using the fact that $\sqrt{a+b} \leq \sqrt{a} + \sqrt{b}$ for $a,b\geq 0$ and Young's inequality:
\begin{align}\label{sq_bound:final_rho}
    |\rho(T)| \leq Ce^{2rT} |\mu_0 -\hat\mu_0|^2.
\end{align} Note that the constant is not independent of $r$ since we could possibly have $e^{2rT_r}\to\iy$ as $r\to 0^+$. Plugging in \eqref{sq_bound:final_rho} to \eqref{sq_bound:sup_w}:
\begin{align}\label{sq_bound:final_delta_w}
    \sup_{t\in\R_+} |\Delta w(t)| \leq Ce^{2rT} |\mu_0 -\hat\mu_0|^2.
\end{align} Using the ODE $w$ satisfies, integrating, using \eqref{A_integral}, and applying \eqref{sq_bound:final_rho} and \eqref{sq_bound:final_delta_w} gives:
\begin{align*}
    e^{-rt}|w_x(t)| &\leq \int_{t}^{\iy} e^{-rs} |\gamma^*(x,\Delta_x u^r(s))\cdot \Delta_x w(s) + D^\eta_1 F(x,\mu^r(s))\cdot \rho(s) + A_x(s)| ds \\
    &\leq Cr^{-1} e^{-rt}(\sup_{t_1\in\R_+} |\Delta w(t_1)| + |\rho(T)| + |\mu_0 -\hat\mu_0|^2) \\
    &\leq Cr^{-1} e^{-rt} e^{rT} |\mu_0 - \hat\mu_0|^2.
\end{align*} Multiplying both sides by $e^{-rt}$ finishes the proof.
\end{proof}

Using the previous lemma, we will show that $D^\eta_1 U_r$ is Lipschitz in $\mu_0$ with Lipschitz constant $C_r$, which depends on $r$. Here, we begin to establish the regularity of $U_r$ that is uniform in $r$. The result in the next proposition allows for a uniform Lipschitz constant for the family $\{D^\eta_1 U_r(x,\eta)\}_{r\in (0,r_0)}$. Once more we go through the linearized system in order to establish uniform-in-$r$ regularity of $U_r$.

\begin{proposition}\label{prop:linearized_lipschitz}
Let $(u^r,\mu^r)$ and $(\hat u^r, \hat \mu^r)$ be solutions to \eqref{MFG^r} with initial data $\mu^r(0) = \mu_0$ and $\hat\mu^r(0) = \hat\mu_0$. Let $(v,m)$ and $(\hat v, \hat m)$ solve \eqref{lin_MFG_r_around_MFG^r} with $(u^r,\mu^r)$ and $(\hat u^r, \hat \mu^r)$, respectively, and with $m(0)= \hat m(0)=m_0\in\calM_0$. Then, there exist $C,r_0,\theta>0$ independent of the initial data, such that for all $r\in (0,r_0)$ and all $t\in \R_+$,
\[
 |m(t) -\hat m(t)| + |v(t) - \hat v(t)| \le C(1+|\mu_0-\hat\mu_0||m_0|+t)e^{-\theta t}|m_0||\mu_0 - \hat\mu_0|. 
\]
\end{proposition}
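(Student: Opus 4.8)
The plan is to identify the linearized system satisfied by the difference $(w,\rho):=(v-\hat v,\,m-\hat m)$, to observe that its forcing carries the extra factor $|\mu_0-\hat\mu_0|$, and then to close the estimate by the same tools used for \eqref{lin_MFG_r_around_stat}, taking care to keep every constant uniform in $r$.

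First I would subtract the two instances of \eqref{lin_MFG_r_around_MFG^r} — the one centered at $(u^r,\mu^r)$ solved by $(v,m)$ and the one centered at $(\hat u^r,\hat\mu^r)$ solved by $(\hat v,\hat m)$ — and, in every term, add and subtract the coefficient evaluated at $(u^r(t),\mu^r(t))$. This shows that $(w,\rho)$ solves \eqref{lin_MFG_r_around_u}, that is, \eqref{lin_MFG_r_around_MFG^r} centered at $(u^r,\mu^r)$ with the zero initial datum $\rho(0)=m_0-m_0=0$ and with forcing
\begin{align*}
A_x(t)&:=\big(\gamma^*(x,\Delta_x u^r(t))-\gamma^*(x,\Delta_x\hat u^r(t))\big)\cdot\Delta_x\hat v(t)+\big(D^\eta_1 F(x,\mu^r(t))-D^\eta_1 F(x,\hat\mu^r(t))\big)\cdot\hat m(t),\\
B_x(t)&:=\sum_{y\in[d]}\hat m_y(t)\big(\gamma^*_x(y,\Delta_y u^r(t))-\gamma^*_x(y,\Delta_y\hat u^r(t))\big)\\
&\qquad+\sum_{y\in[d]}\Big(\mu^r_y(t)\,\nabla_p\gamma^*_x(y,\Delta_y u^r(t))-\hat\mu^r_y(t)\,\nabla_p\gamma^*_x(y,\Delta_y\hat u^r(t))\Big)\cdot\Delta_y\hat v(t);
\end{align*}
one has $B(t)\in\calM_0$ because $\sum_x\gamma^*_x(y,\cdot)\equiv0$, hence $\sum_x\nabla_p\gamma^*_x(y,\cdot)\equiv0$. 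Then I would bound the forcing: writing every difference above as a Lipschitz increment (of $\gamma^*$ by ($A_2$), of $\nabla_p\gamma^*=D^2_{pp}H$ by ($A_5$), of $D^\eta_1 F$ by ($A_3$), each also bounded on the relevant compact set) and combining the improved decay estimate of Proposition \ref{prop:MFG}(iii) — that is, \eqref{eqn:good_dog} and \eqref{u_bound_ok}, $|\Delta_x(u^r-\hat u^r)(t)|+|(\mu^r-\hat\mu^r)(t)|+|u^r(t)-\hat u^r(t)|\le Ce^{(r/2-\gamma)t}|\mu_0-\hat\mu_0|$ — with Corollary \ref{cor:MFG^r2} applied to $(\hat v,\hat m)$, $|\hat m(t)|+|\Deltadot\hat v(t)|+|\hat v(t)|\le Ce^{-\theta_0 t}|m_0|$, I obtain $|A(t)|+|B(t)|\le C_1 e^{-\gamma_1 t}$ with $C_1:=C|m_0||\mu_0-\hat\mu_0|$ and $\gamma_1:=\theta_0+\gamma-r/2>0$, and with $C$ independent of $r,m_0,\mu_0,\hat\mu_0$ since all the cited constants are.

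The third step is the decay estimate for $(w,\rho)$ itself, carried out exactly as in the proofs of Lemma \ref{lem:lin_MFG_r_exp_A_B} and Section \ref{sec:prop223}. Since $\rho(0)=0$, the system is driven entirely by $(A,B)$, so the reasoning that produces \eqref{eq:mv_rt2} — Lemmas \ref{lem:M0} and \ref{lem:v} together with the linearized duality inequality, which holds verbatim with $(\bar u^r,\bar\mu^r)$ replaced by $(u^r(t),\mu^r(t))$ because its proof uses only $D^2_{pp}H\le-C_{2,H}$ and \eqref{monotone_2} — yields the crude, forcing-proportional bound $|\Deltadot w(t)|+|\rho(t)|\le CC_1 e^{rt/2}$ with $C$ uniform in $r$. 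Next, recast the $(u^r,\mu^r)$-centered system for $(w,\rho)$ as the stationary-centered system \eqref{lin_MFG_r_around_stat} with modified forcing $(\tilde A,\tilde B)$; the modifications are the usual correction terms, whose coefficients are $O(e^{(r/2-\gamma)t})$ by Proposition \ref{prop:MFG}, and substituting the crude bound into them gives $|\tilde A(t)|+|\tilde B(t)|\le CC_1 e^{-\gamma_2 t}$ with $\gamma_2>0$ bounded below uniformly for $r\in(0,r_0)$ once $r_0$ is small. Lemma \ref{lem:lin_MFG_r_exp_A_B} applied with zero initial datum then gives $|\Deltadot w(t)|+|\rho(t)|\le CC_1(1+t)e^{-(\lambda\wedge\gamma_2)t}$, and integrating the $w$-equation over $[t,\infty)$ — using $e^{-r(s-t)}\le1$ so that no factor $r^{-1}$ appears — upgrades this to the same bound for $|w(t)|$. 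Recalling $C_1=C|m_0||\mu_0-\hat\mu_0|$, absorbing the polynomial factor into a slightly smaller exponent $\theta$, and using $|v-\hat v|=|w|$, $|m-\hat m|=|\rho|$ and $1+t\le1+|\mu_0-\hat\mu_0||m_0|+t$, one obtains the asserted inequality.

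The main obstacle is preserving the uniformity in $r$ at the third step. The decisive point is to feed the forcing-proportional crude bound $CC_1 e^{rt/2}$ — which scales with $C_1$ and hence retains the factor $|\mu_0-\hat\mu_0|$ — into the correction terms, rather than the bound $Ce^{-\theta_0 t}|m_0|$ available from Corollary \ref{cor:MFG^r2} applied to $v$ and $\hat v$ separately (which would drop that factor and leave only the statement that $(w,\rho)$ is small). This is also why, in contrast with Lemma \ref{lemma:lipschitz_grad}, no $r$-dependent constant appears: the forcing $(A,B)$ decays at a rate $\gamma_1$ bounded below independently of $r$, so one never needs to localize the estimate at an $r$-dependent time at which a non-decaying quantity is maximized.
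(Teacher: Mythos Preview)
Your proof is correct, and the first two steps (identifying $(w,\rho)$ as a solution of \eqref{lin_MFG_r_around_u} with $\rho(0)=0$, and bounding the forcing by $C|m_0||\mu_0-\hat\mu_0|e^{-\theta t}$ via Proposition~\ref{prop:MFG}(iii) and Corollary~\ref{cor:MFG^r2}) match the paper exactly.

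At the third step the two arguments diverge in presentation though not in substance. The paper stays with the $(u^r,\mu^r)$-centered system throughout: it reduces to $A\equiv 0$ via an auxiliary $\check w$, obtains the same crude bound $|\Deltadot w(t)|+|\rho(t)|\le CC_1e^{rt/2}$ using the weighted duality (with weight $\mu^r_y(t)$) together with Lemmas~\ref{lem:M0} and~\ref{lem:v}, and then \emph{replays} the $\bar\rho^r$-type iteration of Lemma~\ref{lem:lin_MFG_r_exp_A_B} at the $(u^r,\mu^r)$ level, invoking Corollary~\ref{cor:MFG^r2} for the homogeneous piece of the splitting. You instead recast \eqref{lin_MFG_r_around_u} as the stationary-centered system \eqref{lin_MFG_r_around_stat} with modified forcing $(\tilde A,\tilde B)$ --- exactly the device the paper uses to prove Corollary~\ref{cor:MFG^r2} itself --- feed the crude bound into the correction terms, and then apply Lemma~\ref{lem:lin_MFG_r_exp_A_B} off the shelf with zero initial datum. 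Your route is marginally shorter since it reuses the lemma as a black box rather than reproving it; the paper's route avoids the extra change-of-center and keeps the argument self-contained at the $(u^r,\mu^r)$ level. One small point in your favor: you explicitly upgrade from $|\Deltadot w(t)|$ to $|w(t)|$ by integrating the $w$-equation on $[t,\infty)$ with $e^{-r(s-t)}\le 1$ (so no $r^{-1}$ appears), whereas the paper's written proof stops at the bound \eqref{bound_w_rho} on $|\Deltadot w(t)|+|\rho(t)|$ even though the statement asks for $|v-\hat v|$.
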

\begin{proof}
With $w:= v- \hat v$ and $\rho := m-\hat m$, we note that $(w,\rho)$ solves \eqref{lin_MFG_r_around_u} with $\rho(0)=0$,
\begin{align*}
    A_x(t) &:= \big[\gamma^* (x,\Delta_x u^r(t)) -\gamma^* (x,\Delta_x \hat u^r(t))\big] \cdot \Delta_x \hat v(t) \\
    &\qquad + \big[ D^\eta_1 F(x,\mu^r(t)) -D^\eta_1 F(x,\hat\mu^r(t)) \big] \cdot \hat m(t),
\end{align*} and
\begin{align*}
    B_x(t) &:= \sum_{y\in [d]} \Big\{ \hat m_y(t) \big[ \gamma^*_x (y,\Delta_y u^r(t)) -\gamma^*_x (y,\Delta_y \hat u^r(t)) \big] \\
    &\qquad\qquad  + \big[ \mu_y^r(t) \nabla_p \gamma^*_x(y,\Delta_y u^r(t)) - \hat \mu_y^r(t) \nabla_p \gamma^*_x(y,\Delta_y \hat u^r(t))\big] \cdot \Delta_y \hat v(t)\Big\}.
\end{align*} 

Note that from Proposition \ref{prop:MFG}, together with Corollary \ref{cor:MFG^r2}, there are $C,\theta,r_0>0$ such that for any $t\in \R_+$ and $r\in (0,r_0)$,
\begin{equation}\label{master:AB_decay}
    |A(t)| + |B(t)| \leq C|\mu_0 - \hat\mu_0| |m_0|e^{-\theta t} .
\end{equation} For the rest of the proof, $C$ is a positive constant, independent of $r$, the time variables, and the initial data, which may change from one line to the next.
The rest of the proof here is adapted from that of Lemma \ref{lem:lin_MFG_r_exp_A_B}. We start with showing that one can reduce the problem to one with $A\equiv 0$. To this end,
let $(w^A,\rho^A)$ be the unique solution to \eqref{lin_MFG_r_around_u} with some given $A$ and $B$ satisfying \eqref{master:AB_decay}. Also, let $\check w$ be the unique (bounded) solution to: 
\begin{align}\notag
-\frac{d}{dt}\check w_x(t)=-r\check w_x(t) + \gamma^*(x,\Delta_x u^r(t))\cdot \Delta_x \check w(t) + A_x(t), \qquad t\in\R_+.
\end{align}
By Lemma \ref{lem:v} with $T\to\iy$, there exist $c,C>0$ depending only on $\A$, such that:
\begin{align}\label{bound_delta_check_w}
|\Deltadot \check w (t)|\le Ce^{(c+r)t} \int_t^\iy e^{-(c+r)s}|A(s)|ds \le \frac{Ce^{-\theta t}}{c+r+\theta} |\mu_0 - \hat\mu_0||m_0|,   \qquad  t\in\R_+.
\end{align}
Now, $(w^A-\check w,\rho^A)$ solves \eqref{lin_MFG_r_around_u} with $A\equiv 0$ and $\check B_x(t)=B_x(t)+\sum_{y\in[d]}\mu^r_y\nabla_p\gamma^*_x(y,\Delta_y u^r)\cdot\Delta_y \check w(t)$.
Still, $\check B(t)\in\calM_0$. Recall that by the vanishing discount proof in Proposition \ref{prop:statDisc_erg_mfg}, $|\Delta_x \bar u^r|$ is uniformly bounded for $r$ small enough. So, by Proposition \ref{prop:MFG} and this observation, we can modify $C,r_0>0$, so that for all $r\in (0,r_0)$:
\[
|\Delta u^r| \leq C|\Delta \bar u^r| + C|\mu_0 - \bar\mu^r| \leq \hat C,
\] where, still $\hat C$ is independent of $r$ and the initial data. So, $\Delta_x u^r$ is uniformly bounded for $r\in(0,r_0)$ and for any $x\in [d]$. By assumption the second derivative of $H$ is bounded on any compact set and equals $\nabla_p \gamma^*$. Therefore by \eqref{master:AB_decay} and \eqref{bound_delta_check_w}, \begin{align}\label{eqn:checkB2}
|\check B(t)| \le C_1 e^{-\theta t}|\mu_0-\hat\mu_0||m_0|, 
\end{align}
where $C_1$ depends only on $\A$, and the bounds on $C_{L,\gamma^*}$, $C_{L,\nabla_p\gamma^*}$, and $C_{\nabla_p\gamma^*}$.

For any $b>0$, define the set of measurable functions: 
$$\calB_b:=\{B:\R_+\to\calM_0 \mid |B(t)| \le  b e^{-\theta t},\quad \forall t\in\R_+\}. $$
We prove that for any $c_1>0$ there exists $C_2>0$ independent of $c_1,r,\mu_0,\hat\mu_0$, and $m_0$, such that 
\begin{align}\label{bound_w_rho}
\sup_{B\in\calB_{c_1}}\left\{|\rho (t)|+|\Deltadot w(t)|\right\}\le C_2 c_1
(1+|\mu_0-\hat\mu_0||m_0|+t)e^{- \theta t},
\end{align} where $(\rho,w)$ above solves \eqref{lin_MFG_r_around_u} with $A\equiv 0$ and some $B\in\calB_{c_1}$. Recall that $(w^A-\check w,\rho^A)$ solves \eqref{lin_MFG_r_around_u} with $A\equiv 0$ and recall also the parameter $C_1>0$ from \eqref{eqn:checkB2}. Then, \eqref{bound_w_rho} implies,
\begin{align*}
|\rho^A (t)|+|\Deltadot (w^A-\check w)(t)|\le C_1C_0|\mu_0-\hat\mu_0||m_0|
(1+|\mu_0-\hat\mu_0||m_0|+t)e^{- \theta t}.
\end{align*}
Together with \eqref{bound_delta_check_w}, we obtain that
\begin{align*}
|\rho^A (t)|+|\Deltadot w^A(t)|\le 2C_1C_0|\mu_0-\hat\mu_0||m_0|
(1+|\mu_0 - \hat\mu_0||m_0| + t)e^{- \theta t}.
\end{align*} The rest of the proof is dedicated to establish \eqref{bound_w_rho} (which by definition considers $A\equiv 0)$.

Since we are able to assume $A\equiv 0$, we may follow the proof of Lemma \ref{lem:duality} to obtain that: 
\begin{align*}
    \int_{t_1}^{t_2} e^{-rs} \sum_{y\in [d]} \mu_y^r(s) |\Delta w(s)|^2 ds \le -C \Big[e^{-rt} \rho(t) \cdot w(t)\Big]_{t_1}^{t_2} + C\int_{t_1}^{t_2} e^{-rs}|B(s)|^2 ds,
\end{align*} for $0\leq t_1<t_2\leq \iy$. Since $\rho(0)=0$ and $\lim_{t\to\iy} e^{-rt} \rho(t)\cdot w(t) =0$, 
\begin{align*}
    \int_{0}^{\iy} e^{-rs} \sum_{y\in [d]} \mu_y^r(s) |\Delta w(s)|^2 ds \le  C\int_{0}^{\iy} e^{-rs}|B(s)|^2 ds. 
\end{align*} The bound for $B$ then yields:
\begin{align*}
    \int_0^\iy e^{-rs} \sum_{y\in [d]} \mu_y^r(s) |\Delta w(s)|^2 ds \le \frac{C}{r+\theta}|\mu_0- \hat\mu_0|^2|m_0|^2.
\end{align*}

Using this fact and Lemma \ref{lem:M0},
\begin{align*}
        |\rho (t)| &\le Ce^{rt/2}\Big(\int_0^t e^{-rs}  \sum_{y\in [d]} \mu_y^r(s) |\Delta w(s)|^2 ds\Big)^{1/2} +C\int_0^t e^{-c(t-s)} |B(s)|_1 ds \leq Ce^{rt/2}|\mu_0 - \hat\mu_0||m_0|.
\end{align*} At the same time, Lemma \ref{lem:v} and boundedness of $D^\eta F$ gives:
\begin{align*}
    |\Delta w(t)| &\le Ce^{(r+c)t} \int_t^\iy e^{-(r+c)s} |D^\eta_1 F(x,\mu^r(s))||\rho(s)| ds \le C e^{rt/2}|\mu_0 - \hat\mu_0| |m_0|. 
\end{align*} Hence,
\begin{align*}
    |\Delta w(t)| + |\rho(t)| \le Ce^{rt/2} |\mu_0 - \hat\mu_0||m_0|. \numberthis{\label{eqn:what_we_need}}
\end{align*}

For any $b>0$, define:
\[
\bar g^r(b,t) := b^{-1}\sup_{B\in\calB_b} e^{-rt} (|\rho (t)| + |\Delta w(t)|),
\]
where $(w,\rho)$ solves \eqref{lin_MFG_r_around_u} with $A\equiv 0$, $\rho(0)=\rho_0=0$, and some $B\in\calB_{b}$. 
The structure of \eqref{lin_MFG_r_around_u} implies that if $(\tilde w, \tilde \rho)$ is a solution with a given $B$, then $(\ell \tilde w, \ell \tilde \rho)$ is the solution associated with $\ell B$ for any $\ell \in\R$. Therefore, for any $b>0$, 
\begin{align}\notag
    \bar g^r(t):=\bar g^r( 1,t)
    =\bar g^r( b,t),\qquad t\in\R_+ .
\end{align} Fix $\tau >0$ and let $(w^{\tau,1},\rho^{\tau,1})$ be the solution of \eqref{lin_MFG_r_around_u} on $[\tau,\iy)$ with $A\equiv B\equiv 0$ and $\rho^{\tau,1} (\tau) = \rho(\tau)$. Also let $(w^{\tau,0},\rho^{\tau,0})$ be the solution to \eqref{lin_MFG_r_around_u} on $[\tau,\iy)$ with $A\equiv 0$, $\rho^{\tau,0}(\tau)=0$, and the given $B$. Then on the time interval $[\tau,\iy)$, $(w,\rho)=(w^{\tau,1},\rho^{\tau,1})+(w^{\tau,0},\rho^{\tau,0})$. 

Note that $(\hat w^{\tau,0}(\cdot),\hat \rho^{\tau,0}(\cdot)):=(w^{\tau,0}(\tau+\cdot),\rho^{\tau,0}(\tau+\cdot))$ solves \eqref{lin_MFG_r_around_u} on $\R_+$. Recall that we assume $|B(t)|\le c_1e^{-\theta t}$. So, $B(\tau + t) \le c_1 e^{-(t+\tau)\theta}$. Together with the definition of $\bar g^r$ and the aforementioned linear property, we have:
\[
c_1^{-1}e^{-rt} (|\rho^{\tau,0} (\tau + t)| + |\Delta w^{\tau ,0 }(\tau +t)|) = e^{-\theta \tau}(c_1 e^{-\theta\tau})^{-1}e^{-rt} (|\hat \rho^{\tau,0} ( t)| + |\Delta \hat w^{\tau ,0 }( t)|) \leq e^{-\theta \tau}\bar g^r( t).
\] Note that $(w^{\tau,1}, \rho^{\tau,1})$ solves \eqref{lin_MFG_r_around_MFG^r} and so we have from Corollary \ref{cor:MFG^r2} that there exist $C_3,\theta>0$ independent of $r,\tau,t,m_0,\mu_0,\hat\mu_0$ such that:
\begin{align*}
    |\Delta w^{\tau,1}(\tau+t)| + |\rho^{\tau,1}(\tau+t)| \le C_3e^{-\theta t}|\rho(\tau)|. 
\end{align*} As a consequence of the previous two displays and then using \eqref{eqn:what_we_need},
\begin{align*}
c_1^{-1} (|\rho(\tau + t)| + |\Delta w(\tau +t)|) &\leq c_1^{-1} C_3e^{-\theta t} |\rho(\tau)| + e^{-\theta \tau} e^{rt} \bar g^r(t) \\
&\leq Ce^{-\theta t}e^{r\tau/2}  + e^{-\theta \tau} e^{rt}\bar g^r(t).
\end{align*} 
Multiplying both sides by $e^{-r(\tau + t)}$ and taking the supremum over $B$ satisfying $|B(t)| \leq c_1 e^{-\gamma t}$, one gets:
\[
\bar g^r (\tau +t) \leq Ce^{-(\theta+r) t} e^{-r\tau/2} + e^{-(\theta +r) \tau} \bar g^r(t) \le Ce^{-(\theta+r) t}  + e^{-(\theta +r) \tau} \bar g^r(t).
\] Multiplying both sides by $e^{(\theta +r)(t+\tau)}$, 
\begin{align}\notag
e^{(\theta+r)(t+\tau)}\bar g^r(\tau+t)\le Ce^{\theta \tau}+e^{(\theta+r)t}\bar g^r(t),
\end{align}
which, together with the fact that $\bar g^r(t) = \bar g^r(b,t)$ for any $b>0$, implies the exponential decay in \eqref{bound_w_rho}.
\end{proof}

\subsection{Proof of (i)} For any $t_0\in\R_+$ and $\mu_0\in\calP([d])$, let $(u^{r,t_0,\mu_0},\mu^{r,t_0,\mu_0})$ be the solution to \eqref{MFG^r} on $[t_0,\iy)$ with the initial data $\mu^r(t_0)=\mu_0$. Define $\bar U_r:\R_+\times[d]\times\calP([d])\to\R$, by  
$$\bar{U}_r(t,x,\eta) := e^{-r t} u^{r,t,\eta}(t,x).$$ 
Clearly, $(u^{r,t,\mu_0}(t+\cdot),\mu^{r,t,\mu_0}(t+\cdot))$ solves \eqref{MFG^r} on $\R_+$ with the initial condition $\mu^{r,t,\mu_0}(0)=\mu_0$. By uniqueness of the solution to \eqref{MFG^r}, we get that $(u^{r,t,\mu_0}(t+\cdot),\mu^{r,t,\mu_0}(t+\cdot))=(u^{r,0,\mu_0}(\cdot),\mu^{r,0,\mu_0}(\cdot))$ and as a result 
\begin{align*}
\bar U_r(t,x,\mu_0) =e^{-rt}u^{r,t,\mu_0}(t,x)=e^{-rt}u^{r,0,\mu_0}(0,x)= e^{-rt} \bar U_r(0,x,\mu_0),
\end{align*}
where here and in the sequel we often use the notation $u(t,x)$ for $u_x(t)$. Also, let $v^{r,t_0,\mu_0,m_0}(t,x)$ be the solution of  \eqref{lin_MFG_r_around_MFG^r} on $[t_0,\iy)$ with the data $(u^{r,t_0,\mu_0},\mu^{r,t_0,\mu_0})$ and $m(t_0) = m_0$. Let $\bar D: \R_+ \times [d] \times \calP([d]) \to \R^d$ be given by $\bar D = (\bar D_z)_{z\in [d]}$ where 
$$\bar D_z(t_0,x,\mu_0) := v^{r,t_0,\mu_0,e_{1z}}(t_0,x).$$ By linearity of \eqref{lin_MFG_r_around_MFG^r}, it follows that 
\begin{align}\label{master:linear_deriv}
    v^{r,t_0,\mu_0,m_0}(t_0,x) = \bar D(t_0,x,\mu_0)\cdot m_0.
\end{align} By Lemma \ref{lemma:lipschitz_grad}, there exists $C_r>0$ such that for all $t_0 \in \R_+$ and all $\mu_0,\tilde \mu_0\in\calP([d])$:
\[
|\bar{U}_r(t_0,x,\mu_0) - \bar{U}_r(t_0,x,\tilde\mu_0) - \bar D(t_0,x,\mu_0)\cdot (\mu_0 - \tilde\mu_0)| \leq C_r|\mu_0 - \tilde \mu_0|^2.
\] So, we have \begin{align}\label{master:bar_U_D}
D^\eta_1 \bar U_r(t_0,x,\mu_0) = \bar D(t_0,x,\mu_0)
\end{align} and $\bar D$ is Lipschitz in $\mu_0$ with Lipschitz constant $C_r$, a constant depending on $r$ for now. Let $\alpha: [t_0,\iy) \times \calP([d])\to \calQ$ be defined by $\alpha_{xy}(s,\eta):= \gamma^*_y(x,\Delta_y u^{r,0,\mu_0}(t_0))$. Also, denote $\mu(t)=\mu^{r,0,\mu_0}(t)$, $t\in\R_+$, and define $\mu^{(s)}:= (1-s)\mu(t_0) + s\mu(t_0 +h)$. Since $D^\eta \bar U_r$ exists, we can write for $h>0$ that: 
\begin{align*}
    \bar U_r(t_0 + h, x,\mu(t_0+h)) -\bar U_r (t_0 &+h,x,\mu(t_0)) \\ \numberthis \label{eqn:star_35}
    &= \int_0^1 \Big[\frac{\pl}{\pl(\mu(t_0+h) - \mu(t_0))} \bar U_r(t_0 + h,x, \mu^{(s)})\Big]_1 ds \\
    &= \int_0^1 \int_{t_0}^{t_0+h} \sum_{z,y\in [d]} \mu_y(t) \al_{yz} (t,\mu(t)) D^\eta_{1z} \bar U_r (t_0 + h,x,\mu^{(s)}) dtds \\
    &= \int_0^1 \int_{t_0}^{t_0+h} \sum_{z,y\in [d]} \mu_y(t) \al_{yz} (t,\mu(t)) D^\eta_{yz} \bar U_r (t_0 + h,x,\mu^{(s)}) dtds \\
    &= \int_0^1 \int_{t_0}^{t_0+h} \sum_{y\in [d]} \mu_y(t) \al_{y} (t,\mu(t))\cdot D^\eta_y \bar U_r (t_0 + h,x,\mu^{(s)}) dtds,
\end{align*} where recall that $A_y$ is the $y$-th row of the matrix $A$ and where for the second and third equalities we used \eqref{simplex_witchcraft} since $\mu_y(t) \al_{y}(t,\mu(t)) \in\calM_0$. 


Since $D^\eta \bar U_r$ is Lipschitz,
\begin{align*}
    \lim_{h\to 0^+} \frac{1}{h} \big[\bar{U}_r(t_0 +h,x,\mu(t_0+h)) - \bar{U}_r(t_0+h,x&,\mu(t_0))\big] \\
    &= \sum_{y\in [d]}\mu_y(t_0) \alpha_y(t_0,\mu(t_0))D^\eta_y \bar{U}_r(t_0,x,\mu(t_0)).
\end{align*} Therefore,
\begin{align*}
    \lim_{h\to 0^+} \frac{1}{h} \big[ \bar U_r(t_0 + h,x,\mu(t_0+h)) -\bar U_r&(t_0,x,\mu(t_0))\big] \\ 
    &= \lim_{h\to 0}\Big\{ \frac{1}{h} \big[ \bar U_r(t_0 + h,x,\mu(t_0+h)) -\bar U_r(t_0+h,x,\mu(t_0))\big] \\
    &\qquad \quad + \frac{1}{h} \big[ \bar U_r(t_0 + h,x,\mu(t_0)) -\bar U_r(t_0,x,\mu(t_0))\big]\Big\}\\
    &= \sum_{y\in [d]}\mu_y(t_0) \alpha_y(t_0,\mu(t_0))D^\eta_y \bar{U}_r(t_0,x,\mu(t_0)) \\
    &\qquad \quad +\partial_t \bar U_r (t_0,x,\mu(t_0)). 
\end{align*} On the other hand, by the definition of $\bar U_r$ and since $u^{r,t,\mu(t)}(t,x)=u^{r,0,\mu(0)}(t,x)$,
\begin{align*}
    \lim_{h\to 0^+} \frac{1}{h}\big[\bar{U}_r(t_0+h,x,\mu(t_0&+h)) - \bar{U}_r(t_0,x,\mu(t_0))\big] \\
    &= \lim_{h\to 0^+} \frac{1}{h}\big[ e^{-r(t_0+h)}u^{r,t_0+h,\mu(t_0+h)}(t_0+h,x) - e^{-rt_0}u^{r,t_0,\mu(t_0)}(t_0,x)\big]\\
    &= e^{-rt_0} \lim_{h\to 0^+} \frac{1}{h} \big[e^{-rh} u^{r,0,\mu_0} (t_0+h,x) - u^{r,0,\mu_0}(t_0,x) \big]\\
    &= e^{-rt_0}\big[\frac{d}{dt} u^{r,0,\mu_0}(t_0,x) - ru^{r,0,\mu_0}(t_0,x)\big].
\end{align*} Putting these two observations together and using the definition of $\al$, we obtain that:
\begin{align*}
\partial_t \bar{U}_r(t_0,x,\mu(t_0)) &= e^{-rt_0}\left[\frac{d}{dt} u^{r,0,\mu_0}(t_0,x) - ru^{r,0,\mu_0}(t_0,x)\right] \\
&\qquad - \sum_{y\in [d]} \mu_y(t_0) \gamma^*(y,\Delta_y u^{r,0,\mu_0}(t_0,\cdot)) \cdot D^\eta_y \bar U_r (t_0,x,\mu(t_0)).
\end{align*} Now, using the facts that $u^r$ solves $\eqref{MFG^r}$ and that $ \bar{U}_r(t_0,x,\mu(t_0)) = e^{-rt_0}\bar{U}_r(0,x,\mu(t_0))$, we obtain:
\begin{align*}
re^{-rt_0} \bar U_r(0,x,\mu(t_0)) &= e^{-rt_0}\left[ H(x,\Delta_x u^{r,0,\mu_0}(t_0,\cdot)) + F(x,\mu(t_0))\right]\\
&\qquad + e^{-rt_0}\sum_{y\in [d]} \mu_y(t_0) \gamma^*(y,\Delta_y u^{r,0,\mu_0}(t_0,\cdot)) \cdot D^\eta_y \bar U_r (0,x,\mu(t_0)). \notag
\end{align*} Take $t_0=0$ and define: \begin{align}\label{eq:Ur}
U_r(x,\eta):= \bar{U}_r(0,x,\eta) = u^{r,0,\eta}(0,x).
\end{align}
Expand the dot product to obtain that for any $\eta\in\calP([d])$: 
\[
rU_r(x,\eta)= H(x,\Delta_x U_r(\cdot,\eta))+\sum_{y,z\in[d]}\eta_y D^\eta_{yz} U_r(x,\eta)\gamma^*_z(y,\Delta_y U_r(\cdot,\eta))+F(x,\eta),
\] which is exactly \eqref{ME^r}. 

\subsection{Proof of (ii)} Fix arbitrary $t_0>0$ and $\mu_0\in\calP([d])$. Let $\tilde U_r$ be a solution to \eqref{ME^r}. Fix $\mu_0 \in\calP([d])$ and let $\tilde\mu^r :[t_0,\iy) \to\calP([d])$ be the unique solution to: 
\[
\begin{cases}
\frac{d}{dt} \tilde\mu_x^r(t) = \sum_{y\in [d]} \tilde\mu_y^r(t) \gamma_x^*(y,\Delta_y \tilde U_r(\cdot,\tilde \mu^r(t))),\\
\tilde\mu^r(t_0) = \mu_0.
\end{cases} 
\] Define a function $\tilde u^r : [t_0,\iy) \times [d]\to \R$ by $\tilde u^r(t,x) := \tilde U_r(x,\tilde\mu(t))$. By the definitions of $\tilde \mu^r$, $\tilde u^r$, and $\tilde U_r$, we get by a similar computation to \eqref{eqn:star_35} that:
\begin{align*}
    -\frac{d}{dt} \tilde u^r(t,x) &= \sum_{y,z\in [d]} \tilde\mu^r_y(t) D^\eta_{yz} \tilde U_r(x,\tilde\mu^r(t))\gamma_z^*(y,\Delta_y \tilde U_r(\cdot,\tilde\mu^r(t))) \\
    &= -r\tilde U_r(x,\tilde\mu^r(t)) + H(x,\Delta_x \tilde U_r(\cdot,\tilde \mu^r(t))) + F(x,\tilde\mu^r(t)) \\ 
    &= -r\tilde u^r(t,x) + H(x,\Delta_x \tilde u^r(t,\cdot)) + F(x,\tilde\mu^r(t)).
\end{align*} So $(\tilde u^r, \tilde\mu^r)$ solves \eqref{MFG^r} and by uniqueness for that system, $\tilde u^r=u^r$, which leads to the equality $\tilde U_r(t,\tilde\mu(t)) = U_r(t,\mu(t))$ for all $t\in[t_0,\iy)$. Specifically, for $t=t_0$, 
$$\tilde U_r(t_0,\mu_0)=\tilde U_r(t_0,\tilde\mu(t_0)) = U_r(t_0,\mu(t_0))=U_r(t_0,\mu_0).$$
Since $t_0$ and $\mu_0$ are arbitrary, we obtain $\tilde U_r=U_r$.

\subsection{Proof of (iii)} By \eqref{master:linear_deriv} and \eqref{master:bar_U_D}, $D^\eta_{1y} U_r(x,\eta) = v^{r,0,\mu_0,e_{1y}}(0,x)$, and so by Corollary \ref{cor:MFG^r2}, there exist $C,r_0>0$ such that for all $x,y\in[d]$, $\eta \in\calP([d])$, $r\in (0,r_0)$, and $m\in\calM_0$,
\begin{align}\label{eq:D_eta_bound}
\Big| \frac{\pl}{\pl m} [U_r(x,\eta)]_1\Big| = |m\cdot D^\eta_1 U_r(x,\eta)| = |v^{r,0,\eta,m}(0,x)| \leq C|m|.
\end{align} In particular, \eqref{eq:D_eta_bound} holds for $m=e_{1z}.$ By \eqref{bound_u}, we have $|rU_r(x,\eta)| = |ru^r(0,x)|\leq  C_{f+F}$. Together, \eqref{eq:D_eta_bound} and this fact imply the first part of (iii).

We now turn to establish the uniform Lipschitz constant for the family $\{D^\eta_1 U_r(x,\eta)\}_{r\in (0,r_0)}$.  Let $v^{r,t_0,\mu_0,m_0}(t,x)$ be the solution of \eqref{lin_MFG_r_around_MFG^r} on $[t_0,\iy)$ with parameters $\mu(t_0)=\mu_0$ and $m(t_0) = m_0$. Recall from \eqref{master:linear_deriv} that $\bar D: \R_+ \times [d] \times \calP([d]) \to \R^d$ given by $\bar D = (\bar D_z)_{z\in [d]}$ satisfies 
$$m\cdot \bar D(t_0,x,\mu_0) = v^{r,t_0,\mu_0,m}(t_0,x).$$ By Lemma \ref{lemma:lipschitz_grad}, there exists $C_r>0$ such that for all $\eta_1,\eta_2 \in\calP([d])$:
\[
|U_r(x,\eta_1) - U_r(x,\eta_2) - \bar D(0,x,\eta_1)\cdot (\eta_1-\eta_2)| \leq C_r|\eta_1-\eta_2|^2.
\] From \eqref{master:bar_U_D} and \eqref{eq:Ur}, we have $D^\eta_1 U_r(x,\eta) = \bar D(0,x,\eta)$ for all $\eta\in\calP([d])$ and $\bar D$ is Lipschitz with Lipschitz constant $C_r$ depending on $r$. We can now upgrade the Lipschitz constant for $\bar D$ through $v$ into one that is uniform in $r\in (0,r_0)$. Namely, by \eqref{master:linear_deriv} and Proposition \ref{prop:linearized_lipschitz}, there exists $C>0$ such that for any $\eta_1,\eta_2\in\calP([d])$:
\begin{align*}
\frac{|D^\eta_1 U_r(x,\eta_1) - D^\eta_1 U_r(x,\eta_2)|}{|\eta_1 - \eta_2|} &\leq \sup_{m_0\in\calM_0}
\frac{|(D^\eta_1 U_r(x,\eta_1) - D^\eta_1 U_r(x,\eta_2))\cdot m_0|}{|\eta_1 - \eta_2||m_0|}\\
&=\sup_{m_0 \in\calM_0}\frac{|v^{r,0,\eta_1,m_0}(0,x) - v^{r,0,\eta_2,m_0}(0,x)|}{|\eta_1 - \eta_2||m_0|} \\ 
&\leq C .
\end{align*} So for any $x\in [d]$,
\[
\sup_{r\in (0,r_0)}\sup_{\eta_1\neq \eta_2} \frac{|D^\eta_1 U_r(x,\eta_1) - D^\eta_1 U_r(x,\eta_2)|}{|\eta_1 - \eta_2|} \leq C.
\] 

\subsection{ Proof of (iv):} Fix $\eta,\hat\eta\in\calP([d])$ and let $(u^r,\mu^r)$ and $(\hat u^r,\hat\mu^r)$ solve \eqref{MFG^r} with the initial data $\mu^r(0)=\eta$ and $\hat\mu^r(0)= \hat\eta$, respectively. By the definition of $U_r$ and Lemma \ref{lem:val-meas-duality},
\begin{align*}
    \sum_{x\in [d]}(U_r(x,\eta) - U_r(x,\hat\eta))(\eta_x-\hat\eta_x) = \sum_{x\in [d]} (u^r(0,x)-\hat u^r(0,x))(\eta_x-\hat\eta_x) \geq 0.
\end{align*}
\qed

\section{Proof of Theorem \ref{prop:ME}}\label{sec:pf_thm}
\subsection{Proof of (i)} 
Let $U_r$ be the solution to $\eqref{ME^r}$ and let $(\bar u^r,\bar\mu^r)$ be the stationary solution to the discounted MFG system \eqref{stat_MFG_r}. By the definition of $U_r$ from \eqref{eq:Ur} and Proposition \ref{prop:statDisc_erg_mfg}, there exist $C,r_0>0$ such that for all $r\in (0,r_0)$, and $x\in [d]$, $|\Delta_x U^r(\cdot,\bar\mu^r)| = |\Delta_x \bar u^r| \leq C\sqrt{r} + |\Delta_x \bar u|$ where $\bar u$ comes from \eqref{erg_MFG}. Since $D^\eta_1 U_r$ is Lipschitz uniformly in $r\in (0,r_0)$, we can say the same for $U_r$. Combining this with Proposition \ref{prop:MEr} (iii), we may update $C>0$ such that for any $\eta\in\calP([d])$, $x\in [d]$, and any $r\in (0,r_0)$,
\begin{align*}
 |U_r(x,\eta) - U_r(1,\bar\mu^r)| &\le |U_r(x,\eta) - U_r(x,\bar\mu^r)| + |U_r(x,\bar\mu^r) - U_r(1,\bar\mu^r)| \\
 &\le C |\eta- \bar\mu^r| + |U_r(x,\bar\mu^r) - U_r(1,\bar\mu^r)| \\
 &\le 2C + C\sqrt{r_0} + |\Delta \bar u|.
\end{align*} Then by passing the suprema,
\[
\sup_{r\in (0,r_0)}\sup_{\eta \in \calP([d])} |U_r (x,\eta) - U_r(1,\bar\mu^r)| \leq 2C + C\sqrt{r_0} + |\Delta \bar u|.
\] By the second part of Proposition \ref{prop:MEr} (iii), we have for all $x\in [d]$:
\[
\sup_{r\in (0,r_0)}\Big(\sup_{\eta \in \calP([d])}|D^\eta_1 U_{r}(x,\eta)| + \sup_{\eta_1 \neq \eta_2}\frac{|D^\eta_1 U_r(x,\eta_1)- D^\eta_1 U_r(x,\eta_2)|}{|\eta_1-\eta_2|}\Big)\leq C_2.
\] Therefore, for any $x\in[d]$, $(\|U_r(x,\cdot) - U_r(1,\bar\mu^r)\|_{\calC^1_{\text{Lip}}(\calP([d]))})_{r\in(0,r_0)}$ is bounded, where for any differentiable $g:\calP([d])\to \R$, we define: 
\begin{align}
&\|g(\cdot)\|_{\calC^1_{\text{Lip}}(\calP([d]))} := \sup_{\eta \in \calP([d])} |g(\eta)| +  \sup_{\eta \in \calP([d])} |D^\eta_1 g(\eta)|+\sup_{\eta_1\ne\eta_2}\frac{|D^\eta_1 g(\eta_1)-D^\eta_1 g(\eta_2)|}{|\eta_1-\eta_2|}.\notag
\end{align} Now, we make use of the fact that the bound is uniform in $r$ and employ the vanishing discount approach. By compactness, for any sequence of vanishing discounts $(r_n)_n\to 0$, there exists a subsequence, which by abuse of notation we also refer to as $(r_n)_n$, for which there exists $U_0(x,\cdot) \in \calC^1(\calP([d]))$ with $
\|U_0(x,\cdot)\|_{\calC^1(\calP([d]))} \leq C$ and where:
\[
U_{r_n}(x,\cdot) - U_{r_n}(1,\bar\mu^{r_n}) \to U_0(x,\cdot) \quad \text{ and } \quad D^\eta U_{r_n}(x,\cdot) \to D^\eta U_0(x,\cdot), 
\] with convergence in the uniform sense. Again by passing to a further subsequence, the real-valued sequence $\{r_n U_{r_n}(1,\bar\mu^{r_n})\}_{n\in\N}$  has a limit, which we denote by $\varrho_1$. In view of the master equation \eqref{ME^r}, using the Lipschitz continuity of $H$ and $\gamma^*$, and the vanishing discount:
\begin{align*}
\varrho_1 &=  \lim_{n\to\infty} [r_n U_{r_n}(1,\bar\mu^{r_n}) + r_n(U_{r_n}(x,\eta) - U_{r_n}(1,\bar\mu^{r_n}))] \\
&= \lim_{n\to\infty} r_n U_{r_n}(x,\eta)\\
&= \lim_{n\to\infty} \Big[ H(x,\Delta_x U_{r_n}(\cdot,\eta))+\sum_{y,z\in[d]}D^\eta_{yz} U_{r_n}(x,\eta)\eta_y\gamma^*_z(y,\Delta_y U_{r_n}(\cdot, \eta))+F(x,\eta)\Big] \\
&= H(x,\Delta_x U_0(\cdot,\eta))+\sum_{y,z\in[d]} D^\eta_{yz} U_0(x,\eta) \eta_y\gamma^*_z(y,\Delta_y U_0(\cdot,\eta))+F(x,\eta).
\end{align*}

\subsection{Proof of (ii)} This follows from the uniform constant used in the vanishing discount. 

\subsection{Proof of (iii)} From Proposition \ref{prop:MEr}, $U_r$ is monotone and so for any $\eta,\hat\eta \in\calP([d])$:
\begin{align*}
    0&\leq \sum_{x\in [d]} (\eta_x - \hat\eta_x)(U_r(x,\eta)-U_r(x,\hat\eta)) \\
    &=\sum_{x\in [d]} (\eta_x - \hat\eta_x)(U_r(x,\eta)-U_r(1,\bar\mu^r) + U_r(1,\bar\mu^r)-U_r(x,\hat\eta)).
\end{align*} Taking the limit along the subsequence as in the vanishing discount we obtain
\[
0\leq \sum_{x\in [d]} (\eta_x - \hat\eta_x)(U_0(x,\eta) - U_0(x,\hat\eta)).
\]

\subsection{Proof of (iv)}From the definition of $\vr_1$ above, the definition of $U_r(x,\bar\mu^r)$ (see \eqref{eq:Ur}, and Proposition \ref{prop:statDisc_erg_mfg}, 
\begin{equation}\label{erg_value_verif}
    \varrho_1 = \limn r_n U_{r_n}(1,\bar\mu^{r_n}) = \limn r_n \bar{u}^{r_n}_1 = \bar{\varrho}.
\end{equation} In this way, we eliminate the dependence on the choice of state $1$ for $\varrho$, the constant in \eqref{ME}. Moreover, since $U_r(x,\bar\mu^r)=\bar u^r_x$ and by \eqref{stat_MFG_r}, the vanishing discount yields:
\[
\sum_{y,z\in[d]} D^\eta_{yz} U_0(x,\bar\mu) \bar\mu_y\gamma^*_z(y,\Delta_y U_0(\cdot,\bar\mu)) = \lim_{n\to\infty} \Big[\sum_{y,z\in[d]}D^\eta_{yz} U_{r_n}(x,\bar\mu^{r_n}) \bar\mu^{r_n}_y\gamma^*_z(y,\Delta_y U_{r_n}(\cdot,\bar\mu^{r_n}))\Big]=0.
\] So for $\eta=\bar\mu$ we have
\[
\varrho = H(x,\Delta_x U_0(\cdot,\bar\mu))+F(x,\bar\mu).
\] Since $\varrho=\bar\varrho$, we see that $(\varrho, U_0(\cdot,\bar\mu),\bar\mu)$ solves the ergodic MFG \eqref{erg_MFG} and hence $U_0(\cdot,\bar\mu)=\bar u_{\cdot}$ up to a constant.

{\bf Uniqueness up to a constant: } On one hand, we note that for any solution $(\varrho,U)$ to \eqref{ME}, we can obtain another solution $(\varrho,U + c\Vec{1})$ where $\Vec{1}\in\R^d$ is the $d$-vector of all ones. This follows from the fact that the shift by $c\vec{1}$ does not change the derivatives $D^\eta_{yz} U(x,\eta)$ and since $\Delta_x (U+c\vec{1})(\cdot,\eta) = \Delta_x U(\cdot,\eta)$.

We now turn to prove the other direction. 
Let $(\varrho,U_0^i)$, $i=1,2$,  be two solutions to the ergodic master equation \eqref{ME}. 
Define $\mu^i:\R_+ \to \calP([d])$ as the unique solution to:
\[
\frac{d}{dt} \mu^i_x(t) = \sum_{y\in [d]} \mu^i_y(t) \gamma^*_x(y,\Delta_y U^i_0(\cdot,\mu^i(t))),
\] with $\mu^i(0) = \mu_0$. By the chain rule,
\[
\frac{d}{dt} U^i_0 (x,\mu^i(t)) = \sum_{y\in [d]} \mu^i_y(t) \gamma^*(y,\Delta_y U^i_0(\cdot,\mu^i(t))) \cdot D^\eta_y U^i_0 (x,\mu^i(t)).
\] Since $U^i_0$ satisfies \eqref{ME}, and setting $u^i_x(t):=U^i_0(x,\mu^i(t))$, we find that $(u^i,\mu^i)$ satisfies the following system:
\begin{align}
\label{sys:finite_erg_Z}
\begin{cases}
-\frac{d}{dt}  u^i_x(t) +\bar\vr= H(x,\Delta_x  u^i(t)) + F(x, \mu^i(t)),\\
\frac{d}{dt}  \mu^i_x (t) = \sum_{y\in [d]}  \mu^i_y(t) \gamma^*_x(y,\Delta_y  u^i(t)), \\
\mu^i(0) = \mu_0, 
\quad t\in\R_+.\end{cases}
\end{align} 
Uniqueness for this system follows by the same arguments given earlier in the paper.

A similar computation to Lemma \ref{lem:val-meas-duality}, along with the fact that $\mu^1(0)=\mu^2(0)=\mu_0$ implies that for any $T>0$,
\begin{align*}
    \int_0^T \sum_{x\in [d]} |\Delta_x (u^1(t)-u^2(t))|^2 (\mu^1_x(t) + \mu^2_x(t)) dt \leq C|\Delta (u^1-u^2)(T)||(\mu^1-\mu^2)(T)| .
\end{align*} 
From \cite[Theorem 3]{Gomes2013}\footnote{It is worth mentioning that this theorem from \cite{Gomes2013} relies on the additional contraction assumptions of that work. However, their proof only uses the contractive assumptions to bound their potential functions $u(t)$ in a sub-additive (non-norm) function $\|\cdot\|_{\sharp}$; here, $\|u(t)\|_{\sharp}$ measures the maximum difference between potentials of any two states from $[d]$ at time $t\in\R_+$ and so is equivalent to $|\Delta u^i(t)|$ in our case. Namely, for any measurable function $b:\R_+\to \R^d$, the quantity $\|b(t)\|_{\sharp}$ is bounded for all $t\in\R_+$ if and only if $|\Delta b(t)|$ is bounded for all $t\in\R_+$. Recall that we found that $\Delta_x u^i(t) = \Delta_x U^i_0(\cdot,\mu^i(t))$ and because $U^i_0$ is Lipschitz, $|\Delta_x U^i_0(\cdot,\mu^i(t))| \leq C(|\Delta_x \bar u| + |\mu^i(t) - \bar\mu|)$. So, we obtain boundedness independently of the contractive assumptions and may otherwise reuse the proof from \cite[Theorem~3]{Gomes2013}.}, we have that $\mu^1(t),\mu^2(t) \to \bar\mu$ as $t\to\iy$, where $(\vr,\bar u ,\bar\mu)$ is the unique solution to \eqref{erg_MFG}. Therefore,
\begin{align*}
    \lim_{T\to\iy} \int_0^T \sum_{x\in [d]} |\Delta_x (u^1- u^2)(t)|^2 ( \mu_x^1(t) + \mu_x^2(t)) dt \leq 0.
\end{align*} Note that if $ \mu^i_x(0) = 0$, then
\[
\frac{d}{dt} \mu^i_x(0) = \sum_{y,y\neq x} \mu^i_y(0) \gamma^*_x(y,\Delta_y u^i(0)) > \mathfrak{a}_l>0.
\] So, for any given $\eps>0$, on $[\eps,T]$, the process $ \mu^1+ \mu^2$ is bounded away from zero and hence $\Delta_x u^1(t) = \Delta_x  u^2(t)$ for $t\in [\eps,T]$ and for all $x\in [d]$. Taking $\eps\to 0+$, $\Delta_x  u^1(t) = \Delta_x  u^2(t)$ for all $t\in [0,T]$ and $x\in [d]$. Using uniqueness of the Kolmogorov's equation from \eqref{sys:finite_erg_Z}, we obtain that $\mu^1 = \mu^2$. Combining these results implies that $(d/dt)  u^1_x(t) = (d/dt)  u^2_x(t)$, and so, there exists a constant $c\in\R$ (that may depend on $\mu_0$), such that for any $(t,x)\in\R_+\times[d]$, $u^1(t,x) = u^2(t,x) + c$. As a result, $U^1_0(x,\mu(t)) = U^2_0 (x,\mu(t)) + c$ for all $(t,x)\in\R_+\times[d]$. 

Now, we are going to show that actually $c$ is independent of $\mu_0$. Take $\mu_0 = \bar\mu$, which in turn implies $\mu^1(t) = \mu^2(t) = \bar\mu$ and denote the corresponding constant $c$ that arises from the same computation as we had previously as $\bar c \in \R$. Using the Lipschitz continuity of $U^i_0$,
\begin{align*}
    |c- \bar c| &= \lim_{t\to\iy} |U^1_0(x,\mu^1(t)) - U^2_0(x,\mu^2(t)) - U^1_0(x,\bar\mu) + U^2_0 (x,\bar\mu)| \\
    &\leq \lim_{t\to\iy}\big\{|U^1_0(x,\mu^1(t)) - U^1_0(x,\bar\mu)| + |U^2_0(x,\mu^2(t)) - U^2_0 (x,\bar\mu)|\big\} \\
    &\leq C \lim_{t\to\iy} \max_{i=1,2}|\mu^i(t)  - \bar\mu| \\
    &= 0.
\end{align*} So, $c=\bar c$ and the constant is independent of the choice of $\mu_0$. Consequently, $U^1_0(x,\mu_0) = U^2_0 (x,\mu_0) + \bar c$ for all $(x,\mu_0)\in[d]\times\calP([d])$.
\qed

\vspace{1cm}
{\bf Acknowledgement.} We thank the anonymous AE and the  referees for their suggestions, which helped us improve our paper.

\color{black}
\footnotesize
\bibliographystyle{abbrv} 
\bibliography{bib_Asaf_IMS} 

\end{document}